\let\oldequation\equation
\let\oldendequation\endequation
\renewenvironment{equation}
  {\linenomathNonumbers\oldequation}
  {\oldendequation\endlinenomath}
\let\oldalign\align
\let\oldendalign\endalign
\renewenvironment{align}
  {\linenomathNonumbers\oldalign}
  {\oldendalign\endlinenomath}
\def\[#1\]{\begin{linenomath}\begin{align}#1\end{align}\end{linenomath}}
\def\(#1\){\begin{linenomath}\begin{align*}#1\end{align*}\end{linenomath}}
\def\[#1\]{\begin{align}#1\end{align}}
\def\(#1\){\begin{align*}#1\end{align*}}
\newcommand{\ii}{\mathbbm{i}}
\newcommand{\jj}{\mathbbm{j}}
\newcommand{\kk}{\mathbbm{k}}
\renewcommand{\C}{\mathbb{C}}
\newcommand{\R}{\mathbb{R}}
\newcommand{\Z}{\mathbb{Z}}
\newcommand{\id}{\operatorname{id}}
\newcommand{\X}{\mathbb X}
\newcommand{\norm}[1]{\left\vert #1 \right \vert}	
\newcommand{\Norm}[1]{\left\Vert #1 \right \Vert}
\renewcommand{\Re}{\operatorname{Re}}
\renewcommand{\Im}{\operatorname{Im}}
\newcommand{\Isom}{\operatorname{Isom}}
\newcommand{\comment}[1]{}
\newcommand{\ignore}[1]{}
\newcommand{\twovector}[2]{
\left[\begin{array}{c}
#1\\#2	
\end{array}\right]}
\newcommand{\threevector}[3]{
\left[\begin{array}{ccc}
#1\\#2\\#3	
\end{array}\right]}
\newtheorem{thm}{Theorem}
\newtheorem{lemma}[thm]{Lemma}
\newtheorem{cor}[thm]{Corollary}
\numberwithin{thm}{section}
\theoremstyle{definition}
\newtheorem{defi}[thm]{Definition}
\theoremstyle{definition}
\newtheorem{example}[thm]{Example}
\newtheorem{remark}[thm]{Remark}
\numberwithin{equation}{section}
\newcommand{\st}{\;:\;}
\newcommand{\mst}{\;:\;}
\newcommand{\Mod}{\mathcal M}
\newcommand{\Zee}{\mathcal Z}
\newcommand{\Sph}{\mathbb S}
\newcommand{\twomatrix}[4]{\begin{bmatrix}
#1 & #2 \\
#3 & #4 
\end{bmatrix}}
\newcommand{\inversiontwomatrix}{\twomatrix 0 1 1 0}
\newcommand{\translationtwomatrix}[1]{\twomatrix 1 {#1} 0 1}
\title{Convergence of improper Iwasawa Continued Fractions}
\author[A. Lukyanenko]{Anton Lukyanenko}
\address{
Department of Mathematics\\
George Mason University\\
4400 University Drive, MS: 3F2\\
Fairfax, Virginia 22030}
\email{anton@lukyanenko.net}
\author[J. Vandehey]{Joseph Vandehey}
\address{
Department of Mathematics\\
University of Texas at Tyler\\
Tyler, TX 75799
}
\email{jvandehey@uttyler.edu}
\keywords{Convergence, continued fractions, complex continued fractions, Iwasawa continued fractions, Heisenberg group, octonions, quaternions.\\
\indent \emph{2020 Subject Classification:} Primary 11K50, Secondary 11R52, 53C17.
}
\begin{document}

\begin{abstract}
    We prove the convergence of a wide class of continued fractions, including generalized continued fractions over quaternions and octonions. Fractional points in these systems are not bounded away from the unit sphere, so that the iteration map is not uniformly expanding. We bypass this problem by analyzing digit sequences for points that converge to the unit sphere under iteration, expanding on previous methods of Dani-Nogueira.
\end{abstract}

\maketitle

\section{Introduction}

The classical continued fraction (CF) expansion of an irrational real number $x\in[0,1)$ is given by the expression
\[
x= \cfrac{1}{a_1+\cfrac{1}{a_2+\cfrac{1}{a_3+\cfrac{1}{\ddots}}}},\label{eq:basicexpansion}
\]
for some sequence of positive integers $a_i$. Variant CF expansions may introduce minus signs, allow the numerators to vary as well as the denominators, or---most relevant here---expand the setting to other underlying spaces such as the complex numbers, with a corresponding change to the set of allowable digits $a_i$ . Before further questions can be considered, one must first ask whether such a CF expansion converges, and this can be a surprisingly deep question in its own right. The question of when \eqref{eq:basicexpansion} converges for sequences $a_i\in \mathbb{Z}$ (not just $a_i\in\mathbb{N}$) was only recently solved by Short and Stanier \cite{SS}.

For the analytic theory of continued fractions, where both numerators and denominators are allowed to be arbitrary complex numbers (not even belonging to a ring of integers), famous convergence results include the \'Sleszy\'nski-Pringsheim Theorem \cite{sleshinskiy}, Van Vleck \cite{VV}, Worpitzky \cite{Worpitzky}, and many other subsequent papers, including \cite{BS,BB,JT,SW}. Wall \cite{WallBook} and Lorentzen and Waadeland \cite{LW} devote a significant portion of their books to this problem. Convergence results have also been obtained for continued fractions whose digits belong to far more abstract spaces, such as operators on a Banach space \cite{PH} or matrices \cite{ZZ}.

In this paper, we will focus on continued fraction expansions whose digits come from a ring of integers or similarly nice lattice. Here, Tietze \cite{Tietze} famously proved the convergence of all \emph{semi-regular continued fractions}, a large class of CF expansions with real integer digits. Rosen \cite{Rosen} studied continued fractions related to the Hecke groups. Dani \cite{Dani2015} proved the convergence of a wide variety of complex continued fractions, including those that use Gaussian integers and Eisenstein integers. In Schweiger's book \cite{SchweigerBook} convergence is discussed for a large class of multi-dimensional continued fraction algorithms.

In many of these cases, the convergence of the representation is ultimately based on the fact that the inversion map $\iota:x\mapsto 1/x$ is an expanding map inside the unit ball. 

Let $T_{a_1}x=\frac{1}{x}-a_1$ be the map that removes the first digit from the CF expansion \eqref{eq:basicexpansion} of $x$, and consider the orbit $\{T_{a_n}\cdots T_{a_1}x\}$. If $\norm{T_{a_n}\cdots T_{a_1} x}<1-\epsilon$ for some $\epsilon>0$ and for all $n\ge 0$, then $\iota$ will always be \emph{uniformly} expanding along this orbit, and often very simple arguments will suffice to prove the convergence of the expansion \eqref{eq:basicexpansion}.\footnote{We will ignore those countably many points $x$ for which we ever see $T_{a_n}\cdots T_{a_1} x=0$.} CF algorithms which obey this norm bound were called \emph{proper} by the authors \cite{lukyanenko_vandehey_2022}, who showed that proper algorithms on the quaternions, octonions, and other Iwasawa inversion spaces would converge. Here, the only difficulty is in showing that division by zero does not occur.

%The convergence of a continued fraction (CF) representation of a point $x$ is ultimately based on the fact that the inversion map $\iota$ is an expanding map inside the unit ball. If the orbit of $x$ under the CF mapping $Tx=\frac{1}{x}-a_i$ is bounded away from the unit sphere, one furthermore has that $\iota$ is \emph{uniformly} expanding along the orbit. 

Recently, Dani-Nogueira \cite[Theorem~3.7]{DN2014} weakened this condition to require only that $\norm{T_{a_n}\cdots T_{a_1} x}<1$ for all $n\ge 0$, and under this assumption proved the convergence of any  CF representations of a point \iffalse$x\in \mathbb D \subset \C$\fi with Gaussian integer digits. In \cite{Dani2016}, Dani further extends this to consider digits belonging to any discrete subring of $\C$ containing $1$. As a consequence of our Main Theorem \ref{thm:IwasawaMain}, we are able to extend their result to \emph{improper} CF algorithms over quaternions and octonions (for concrete examples of such algorithms, see Section \ref{sec:examples}). 

Before stating our main result, we recall some standard notation:
\begin{defi}Let $k$ be a finite-dimensional \emph{real division algebra}, i.e.,~the reals, complex numbers, quaternions, or octonions. We think of $k$ as $\R^d$ with a basis $1=e_0, e_1, \ldots, e_d$ for $d=1,2,4,8$, respectively, with the standard multiplicative structure. In particular, we write $x=a_0 + \sum_{i=1}^{d-1} a_i e_i$, writing $e_1=\ii, e_2=\jj, e_3=\kk$ when working with complex or quaternionic numbers. One then has the \emph{real part of $x$}, $\Re(x)=a_0$, the \emph{imaginary part of x}, $\Im(x)=x-\Re(x)$ and \emph{conjugate of x}, $\overline x = \Re(x)-\Im(x)$. For $p, q\in k$ we write $\frac{p}{q}=pq^{-1}$, where $q^{-1}$ is the multiplicative inverse of $q$. \linebreak
The Euclidean norm on $\R^d$ can be expressed as $\norm{x}^2 = x \overline x=\sum_{i=0}^7 a_i^2$, providing the Euclidean distance function $d(\cdot, \cdot)$ and corresponding notion of convergence. In particular, one writes $x=\lim_{i\rightarrow \infty} x_i$ if $\lim_{i\rightarrow \infty} d(x_i, x)=\norm{x_i-x}=0$.\linebreak
A subring $\Zee$ of $k$ is \emph{discrete} if it is a discrete subset of $\R^d$ with the Euclidean topology.  Given a digit $a\in \Zee$, we write $T_ax=\frac{1}{x}$ and $T_a^{-1}x = \frac{1}{x+a}$.
\end{defi}

%, and more broadly to the setting of Iwasawa CFs, which includes CF algorithms in $\R^3$.

\begin{thm}\label{thm:DAmain}
Let $k$ be a real division algebra and $\Zee\subset k$ a discrete ring closed under conjugation. Given a sequence $a_n\in \Zee$ and $x_0\in k$, suppose that $\norm{T_{a_n}\cdots T_{a_1}x_0}<1$ for all $n\geq 0$. Then the associated continued fraction converges to $x_0$:
\(x_0=\lim_{n\rightarrow \infty} T^{-1}_{a_1}\cdots T^{-1}_{a_n}0=\lim_{n\rightarrow \infty} \cfrac{1}{a_1 + \cfrac{1}{\ddots+\cfrac{1}{a_n}}}.\)
\end{thm}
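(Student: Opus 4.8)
The plan is to reduce Theorem~\ref{thm:DAmain} to the Main Theorem~\ref{thm:IwasawaMain} (the ``Iwasawa'' convergence theorem referenced in the abstract and introduction), which presumably handles improper algorithms over Iwasawa inversion spaces under exactly this hypothesis $\norm{T_{a_n}\cdots T_{a_1}x_0}<1$. So the first step is to recall that each real division algebra $k$ — the reals, complexes, quaternions, and octonions — is one of the Iwasawa inversion spaces (with trivial center / no vertical direction, so the Iwasawa group is just $k$ itself with Euclidean metric), and that the maps $T_a x = 1/x$ and $T_a^{-1}x = 1/(x+a)$ are precisely the Iwasawa inversion and translation maps in that setting. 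One must check that a discrete subring $\Zee \subset k$ closed under conjugation gives an admissible digit set in the sense required by the Main Theorem: discreteness gives the lattice-like property, closure under conjugation ensures $\Zee$ is stable under the relevant symmetry (needed so that the partial quotients and convergents are well-behaved, and in particular so that $\overline{q_n}$-type quantities stay in $\Zee$), and $1 \in \Zee$ since $\Zee$ is a subring — though one should double-check whether the Main Theorem needs $1\in\Zee$ or some non-degeneracy like $\Zee$ spanning $k$; if it does, that may need to be assumed or deduced.

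Assuming the Main Theorem applies, the bulk of the argument is already done, and what remains is to translate its conclusion into the concrete continued-fraction statement. Concretely, I would write $p_n/q_n = T^{-1}_{a_1}\cdots T^{-1}_{a_n}0$ for the $n$-th convergent, verify the usual recursive matrix/Möbius formalism $q_n = a_n q_{n-1} + q_{n-2}$, $p_n = a_n p_{n-1} + p_{n-2}$ holds over $k$ (with care about noncommutativity — the octonion case requires that all the elements involved in a single product lie in an associative subalgebra, which is guaranteed since any two elements of $k$ generate an associative subalgebra, but products of three convergent-denominators need a short Artin-type lemma). Then the Main Theorem's conclusion — that the ``cylinder'' sets shrink, or that the orbit of $x_0$ under the inverse maps $T^{-1}_{a_1}\cdots T^{-1}_{a_n}$ applied to the whole ball converges to the single point $x_0$ — gives $T^{-1}_{a_1}\cdots T^{-1}_{a_n}0 \to x_0$ since $0$ lies in the relevant ball and $x_0 = T^{-1}_{a_1}\cdots T^{-1}_{a_n}(T_{a_n}\cdots T_{a_1}x_0)$ with $\norm{T_{a_n}\cdots T_{a_1}x_0}<1$.

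The step I expect to be the main obstacle is the hypothesis matching: the Main Theorem~\ref{thm:IwasawaMain} is stated for a general class of Iwasawa continued fraction \emph{algorithms}, which typically come with a chosen fundamental domain and a digit-selection rule, whereas Theorem~\ref{thm:DAmain} makes no reference to any algorithm — it just takes an arbitrary admissible sequence $(a_n)$ satisfying the norm bound. So I would need to check that the Main Theorem (or an intermediate lemma in its proof) is really about arbitrary digit sequences with the property $\norm{T_{a_n}\cdots T_{a_1}x_0} < 1$ for all $n$, not only about the specific sequences produced by a Gauss-type map. If the Main Theorem is algorithm-specific, the right move is instead to invoke the key \emph{technical} lemma inside its proof — the Dani--Nogueira-style argument analyzing digit sequences of points that converge to the unit sphere under iteration — directly on the given sequence $(a_n)$. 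I would also flag the one genuinely division-algebra-specific subtlety: points with $\norm{x}<1$ are not bounded away from the unit sphere, so $\iota$ is not uniformly expanding; but the hypothesis $\norm{T_{a_n}\cdots T_{a_1}x_0}<1$ forbids the orbit from ever reaching the sphere, and the heart of the cited method is to show that even an orbit creeping toward the sphere cannot do so fast enough to prevent the convergents from converging — this is exactly what the Main Theorem packages, so once the hypotheses are matched the result follows.
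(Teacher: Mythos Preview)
Your plan differs from the paper's. The paper does \emph{not} derive Theorem~\ref{thm:DAmain} by reducing to Theorem~\ref{thm:IwasawaMain}; instead, both theorems are proven in parallel from three lemmas (Lemmas~\ref{lemma:prodIntro}, \ref{lemma:nonzero q}, \ref{thm:non-convergence}): a product formula expressing $\norm{T^{-1}_{a_1}\cdots T^{-1}_{a_n}0-x_0}$ as $\prod_{i=0}^n \norm{x_i}$ times $\prod_{i=0}^{n-1} \norm{T^{-1}_{a_{i+1}}\cdots T^{-1}_{a_n}0}$; the fact that this second product equals $1/\norm{Q_n}$ for a nonzero $Q_n$ in the discrete ring (hence is at most $1$); and the Dani--Nogueira-type descent showing $\norm{x_n}\not\to 1$. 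For division algebras the middle lemma gets a dedicated proof (Section~\ref{sec:divisionalgebra}) precisely because the $2\times 2$ matrix formalism breaks down over quaternions and octonions --- the determinant is not multiplicative and right-sided identities are unavailable --- so the argument is rebuilt around left-sided recurrences for the tails $T^{-1}_{a_m}\cdots T^{-1}_{a_n}0$.

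Your reduction is plausible in outline (the introduction does advertise Theorem~\ref{thm:DAmain} as a consequence of Theorem~\ref{thm:IwasawaMain}), but you leave real work undone. One must embed $k$ as $\X^d_\R$ for $d=\dim_\R k$, not as an Iwasawa space over $k$ itself; with $K$ the open unit ball, condition~(1) of Theorem~\ref{thm:IwasawaMain} fails in dimension $\geq 2$, so you need condition~(2), and the sphere-normalizing property for $\iota(x)=1/x$ does indeed follow from conjugation-closure of $\Zee$ (as Section~\ref{sec:examples} explains). You still have to verify the \emph{integrality} hypothesis on the modular group in this embedding, which you do not address. Finally, your plan to set up the recurrence $q_n=a_nq_{n-1}+q_{n-2}$ and handle non-associativity by an Artin-type lemma runs into exactly the obstacle the paper sidesteps: that recurrence is right-sided and does not survive the octonion case, which is why the paper's division-algebra proof is built on the left-sided definition of $Q[a_i,\ldots,a_n]$ instead.
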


While it is common in continued fraction theory to study a particular CF algorithm that would assign to any given $x_0$ a unique sequence $a_n\in \Zee$, Theorem \ref{thm:DAmain} makes no such assumption. A single $x_0\in k$ could have many associated sequences\footnote{Indeed, the typical $x_0$ often has uncountably many expansions.}, and provided each of them satisfies the necessary condition, the CF expansions will all converge.

%Theorem \ref{thm:DAmain} extends a result from the authors' previous work \cite{lukyanenko_vandehey_2022}. There, \emph{proper} algorithms on the quaternions and octonions, which further required that $\norm{T_{a_n}\cdots T_{a_1}x_0}<1-\epsilon$ for some $\epsilon>0$, were shown to converge. This is again a consequence of the uniform expansion of $\iota$ when bounded away from the unit sphere. This new result shows the convergence of many \emph{improper} algorithms considered in that paper (see Section \ref{sec:examples}).

%The argument of Dani-Nogueira is essentially a geometric descent argument: under the assumptions of Theorem \ref{thm:DAmain} in the complex case, if the CF expansion did not converge, it must be because the orbit $\{T^n x_0\}$ converges to the unit sphere $\Sph$. But the only way for $\{T^n x_0\}$ to converge to $\Sph$ is for it to converge to a finite subset $S_1$. They then argue that convergence to $S_1$ is impossible, completing the argument. In extending their proof, the more complicated geometry of the quaternions and octonions necessitates a longer descent. Convergence to $\Sph$ implies convergence to a subset $S_1$, which implies convergence to a further subset $S_2$, and so on, until one eventually arrives at a finite set.

In full generality, our main result (Theorem \ref{thm:IwasawaMain}) will allow us to obtain convergent CF expansions on any Iwasawa inversion space (e.g., $\R^n$ or the Heisenberg groups), allow more general inversions (e.g., $\iota:x\mapsto -1/x$), and digit choices (as with folded CFs). In particular, we prove for the first time the convergence of CF \emph{algorithms} in the (7-dimensional) Iwasawa inversion spaces
%\footnote{We reserve the name ``Heisenberg group'' for the spaces $\X^n_\C$.}
$\X^1_\mathbb{H}$; see Section \ref{sec:examples} for details.

Furthermore, it follows from the proof of Theorem \ref{thm:DAmain} that, under additional assumptions, we may weaken the condition $\norm{T_{a_n}\cdots T_{a_1}x_0}<1$ in Theorem \ref{thm:DAmain} (with an analogous result extending Theorem \ref{thm:IwasawaMain}):
\begin{thm}\label{thm:stronglyirrational}
Let $k$ be a real division algebra and $\Zee\subset k$ a discrete ring closed under conjugation. Given a sequence $a_n\in \Zee$ and $x_0\in k$, suppose that:
\begin{enumerate}
    \item For each $n\geq 0$ and $a\in \Zee \setminus\{0\}$, $\prod_{i=0}^n \norm{T_{a_i}\cdots T_{a_1}x_0}\neq \norm{a}$
    \\(e.g., $x_0$ is transcendental over $\Zee$), and
    \item $\prod_{i=0}^\infty \norm{T_{a_n}\cdots T_{a_1}x_0} = 0$.
\end{enumerate}
Then the associated continued fraction converges to $x$.
\end{thm}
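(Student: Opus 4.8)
The plan is to isolate from the proof of Theorem~\ref{thm:DAmain} the exact approximation estimate it produces and then feed hypotheses (1)--(2) into it. Throughout write $x_i := T_{a_i}\cdots T_{a_1}x_0$ for the successive tail values (so $x_0$ is the given point, and, the expansion being infinite, every $x_i\neq0$, as otherwise $T_{a_{i+1}}$ could not be applied) and $p_n/q_n := T_{a_1}^{-1}\cdots T_{a_n}^{-1}0$ for the $n$-th convergent, with $p_n,q_n\in\Zee$. The convergent map $M_n := T_{a_1}^{-1}\cdots T_{a_n}^{-1}$ is a composition of the generators $z\mapsto 1/(z+a_i)$ with matrices $\twomatrix{0}{1}{1}{a_i}$, hence has invertible matrix $\twomatrix{p_{n-1}}{p_n}{q_{n-1}}{q_n}$ over $\Zee$ and acts by $M_n(z)=(p_{n-1}z+p_n)(q_{n-1}z+q_n)^{-1}$. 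The proof of Theorem~\ref{thm:DAmain} sets up (with the correct multiplication orders in the quaternionic and octonionic cases) the telescoping product formula $x_0x_1\cdots x_{n-1}=(q_{n-1}x_n+q_n)^{-1}$ together with the Möbius distance formula $\norm{M_n(z)-M_n(0)}=\norm{z}/\left(\norm{q_{n-1}z+q_n}\,\norm{q_n}\right)$ (which uses that $\det M_n$ is a unit). Taking $z=x_n$ there, so that $M_n(x_n)=x_0$ and $M_n(0)=p_n/q_n$, these combine to give, whenever $q_n\neq0$,
\[
\norm{x_0-\frac{p_n}{q_n}}=\frac{\norm{x_n}\prod_{i=0}^{n-1}\norm{x_i}}{\norm{q_n}}=\frac{\prod_{i=0}^{n}\norm{x_i}}{\norm{q_n}}.\label{eq:proposalidentity}
\]

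Granting \eqref{eq:proposalidentity}, the rest is short. By discreteness of $\Zee$ we have $c := \inf\{\,\norm{z}:z\in\Zee\setminus\{0\}\,\}>0$, so $\norm{q_n}\geq c$ whenever $q_n\neq0$. Hypothesis (1) forces $q_n\neq0$ for every $n$: if $q_n=0$, then since $M_n$'s matrix $\twomatrix{p_{n-1}}{p_n}{q_{n-1}}{0}$ is invertible over $\Zee$, the entry $q_{n-1}$ must be a unit of $\Zee$; the product formula then reduces to $\prod_{i=0}^{n-1}\norm{x_i}=\norm{q_{n-1}}^{-1}\norm{x_n}^{-1}$, whence $\prod_{i=0}^{n}\norm{x_i}=\norm{q_{n-1}}^{-1}=\norm{q_{n-1}^{-1}}$ with $q_{n-1}^{-1}\in\Zee\setminus\{0\}$, contradicting (1). (This is the sole use of (1), and explains why it is stated for all $a\in\Zee\setminus\{0\}$.) Hence $\norm{q_n}\geq c$ for all $n$, and \eqref{eq:proposalidentity} gives
\[
\norm{x_0-\frac{p_n}{q_n}}\;\leq\;\frac1c\prod_{i=0}^{n}\norm{x_i},
\]
which tends to $0$ by hypothesis (2); thus $p_n/q_n\to x_0$, as required. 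For the parenthetical in (1): an equality $\prod_{i=0}^{n}\norm{x_i}=\norm{a}$ translates, via the product formula and $p_j,q_j\in\Zee$, into a nontrivial algebraic relation for $x_0$ over $\Zee$, so $x_0$ transcendental over $\Zee$ does imply (1).

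The genuine work is not this deduction but the justification of \eqref{eq:proposalidentity}: carrying the $2\times2$ convergent calculus, the telescoping product, and the Möbius distance formula through the non-commutative---and, for octonions, non-associative---settings, with the multiplication orders arranged so that the clean norm identity survives. This is exactly the machinery developed in the proof of Theorem~\ref{thm:DAmain} (building on the authors' study of Iwasawa inversion spaces \cite{lukyanenko_vandehey_2022}), and the present statement follows by observing that that proof invokes the bound $\norm{x_n}<1$ only to \emph{produce} properties (1) and (2); assuming (1) and (2) outright already suffices. The same reasoning gives the announced strengthening of Theorem~\ref{thm:IwasawaMain}, once one checks that the more general inversions and digit choices there still render the convergent matrices unit-determinant and leave the distance formula intact.
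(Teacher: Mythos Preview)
Your overall strategy---extract the approximation identity from the proof of Theorem~\ref{thm:DAmain} and feed hypotheses (1) and (2) into it---matches the paper's intent. But your implementation relies on the $2\times2$ matrix calculus and on ``$\det M_n$ being a unit'', both to derive the M\"obius distance formula and to conclude that $q_{n-1}$ is a unit of $\Zee$ when $q_n=0$. Over $\R$ and $\C$ this is fine and gives a clean self-contained argument. But the paper explicitly \emph{abandons} the determinant approach at the opening of Section~\ref{sec:divisionalgebra}, precisely because the determinant is not multiplicative over the quaternions \cite{FAN2003193}, and over the octonions matrix products are not even associative. So your closing claim that carrying the $2\times2$ calculus through the non-commutative and non-associative settings ``is exactly the machinery developed in the proof of Theorem~\ref{thm:DAmain}'' is not accurate: that is not what the paper does.

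The paper's actual machinery for general $k$ is the \emph{backward} recursion $P[a_i,\ldots,a_n],\,Q[a_i,\ldots,a_n]$ of Section~\ref{sec:divisionalgebra}. In that framework your identity \eqref{eq:proposalidentity} is Corollary~\ref{lemma:pqproduct}, and it requires $Q[a_i,\ldots,a_n]\neq0$ for \emph{every} $1\le i\le n$, not only for $i=1$; showing only $q_n=Q[a_1,\ldots,a_n]\neq0$ does not suffice to invoke it. The inductive proof of Lemma~\ref{lemma:Qnonzero} shows that if $Q[a_1,\ldots,a_n]=0$ (with the shorter $Q$'s already nonzero), then $\prod_{j=0}^{n}|x_j|=\bigl|Q[a_2,\ldots,a_n]\bigr|$ with $Q[a_2,\ldots,a_n]\in\Zee\setminus\{0\}$. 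Hypothesis~(1) is designed to contradict exactly this equation, so the obstructing element of $\Zee$ in the paper's argument is the backward denominator $Q[a_2,\ldots,a_n]$, not your forward $q_{n-1}^{-1}$.
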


\subsection{An outline of the method}
\label{sec:outline}

Theorem \ref{thm:DAmain} (and Theorem \ref{thm:IwasawaMain}, after appropriate changes in notation) follow immediately from three lemmas, all of which have the same assumptions as the theorem. We start by stating the lemmas, and then prove the first lemma and sketch the proofs of the second and third lemmas, which will make up the rest of this paper.
\begin{lemma}
\label{lemma:prodIntro}
The distance from the CF approximates $T^{-1}_{a_1}\cdots T^{-1}_{a_n}0$ to $x_0$ satisfies
\[
\label{eq:prodIntro}
\norm{T^{-1}_{a_1}\cdots T^{-1}_{a_n}0-x_0}= \prod_{i=0}^n \norm{T_{a_i}\cdots T_{a_1}x_0} \prod_{i=0}^{n-1} \norm{T^{-1}_{a_{i+1}}\cdots T^{-1}_{a_n}0},
\]
assuming the terms of the latter product are well-defined.
\end{lemma}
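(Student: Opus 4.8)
The plan is to realize both the $n$th convergent $T^{-1}_{a_1}\cdots T^{-1}_{a_n}0$ and the target $x_0$ as images of one and the same composition of inversions, and then to track how the distance between them distorts as the inversions are peeled off one at a time. Write $y_i := T_{a_i}\circ\cdots\circ T_{a_1}x_0$ for $i\ge 0$ (so $y_0 = x_0$); since $T^{-1}_{a}\circ T_{a}=\id$ wherever defined, we get $y_{i-1}=T^{-1}_{a_i}(y_i)$, and in particular $x_0 = T^{-1}_{a_1}\cdots T^{-1}_{a_n}(y_n)$, whereas the convergent is $T^{-1}_{a_1}\cdots T^{-1}_{a_n}(0)$. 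The only genuinely analytic ingredient is the exact distortion identity for a single Iwasawa inversion in a real division algebra: for nonzero $u,v$ one has $\norm{u^{-1}-v^{-1}} = \norm{u-v}/(\norm{u}\,\norm{v})$, equivalently $\norm{T^{-1}_{a}(w)-T^{-1}_{a}(w')} = \norm{w-w'}/(\norm{w+a}\,\norm{w'+a})$.

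For the bookkeeping, set $c_i := T^{-1}_{a_{i+1}}\circ\cdots\circ T^{-1}_{a_n}(0)$ for $0\le i\le n$, so that $c_n=0$, $c_0 = T^{-1}_{a_1}\cdots T^{-1}_{a_n}0$ is the $n$th convergent, and $c_{i-1}=T^{-1}_{a_i}(c_i)=(c_i+a_i)^{-1}$; rearranging this gives $\norm{c_i+a_i}=1/\norm{c_{i-1}}$, and in the same way $\norm{y_i+a_i}=1/\norm{y_{i-1}}$. (The hypothesis that the products in \eqref{eq:prodIntro} be well defined is precisely what ensures that every point inverted above is nonzero.) Put $\delta_i := \norm{c_i-y_i}$, so $\delta_n=\norm{0-y_n}=\norm{y_n}$, and apply the single-step identity to $T^{-1}_{a_i}$: $\delta_{i-1}=\norm{c_i-y_i}/(\norm{c_i+a_i}\,\norm{y_i+a_i})=\delta_i\,\norm{c_{i-1}}\,\norm{y_{i-1}}$. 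Multiplying these relations for $i=n,n-1,\dots,1$ telescopes to $\delta_0=\norm{y_n}\prod_{j=0}^{n-1}\norm{c_j}\prod_{j=0}^{n-1}\norm{y_j}$, and absorbing $\norm{y_n}$ into the $y$-product gives $\delta_0=\bigl(\prod_{j=0}^{n}\norm{y_j}\bigr)\bigl(\prod_{j=0}^{n-1}\norm{c_j}\bigr)$. Unwinding $\delta_0=\norm{T^{-1}_{a_1}\cdots T^{-1}_{a_n}0-x_0}$, $y_j=T_{a_j}\cdots T_{a_1}x_0$ and $c_j=T^{-1}_{a_{j+1}}\cdots T^{-1}_{a_n}0$ turns this into \eqref{eq:prodIntro}. (This is the non-commutative counterpart of the classical identity expressing a continued fraction's approximation quality through products of partial quotients.)

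Once the single-step distortion identity is in hand the rest is purely formal, so the one place needing care is that identity over the octonions, where non-associativity could make an expression such as $u^{-1}(v-u)v^{-1}$ ambiguous. This is settled by Artin's theorem (the subalgebra generated by $u$ and $v$ is associative, so there the identity $u^{-1}-v^{-1}=u^{-1}(v-u)v^{-1}$ both makes sense and holds) together with multiplicativity of the composition-algebra norm, $\norm{ab}=\norm{a}\,\norm{b}$; both facts are already available from \cite{lukyanenko_vandehey_2022}. In the broader Iwasawa-inversion setting of Theorem \ref{thm:IwasawaMain} the same telescoping carries over verbatim, with the relevant inversion and its conformal distortion factor in place of $x\mapsto x^{-1}$ and $1/(\norm{u}\,\norm{v})$; everything else is just keeping the indices straight in the product.
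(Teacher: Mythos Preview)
Your proof is correct and follows essentially the same approach as the paper: both arguments rest on the single inversion identity $\norm{u^{-1}-v^{-1}}=\norm{u-v}/(\norm{u}\,\norm{v})$ (justified via alternativity for the octonions) and then iterate it $n$ times to obtain the product formula. The paper writes the iteration by peeling off one $T^{-1}_{a_i}$ at a time from $\norm{T^{-1}_{a_1}\cdots T^{-1}_{a_n}0 - x_0}$ directly, while you introduce the auxiliary sequences $c_i,y_i,\delta_i$ and telescope; this is only a notational difference.
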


\begin{lemma}
\label{lemma:nonzero q}
The product \[\label{eq:product definition of qn}
\prod_{i=0}^{n-1} \norm{T^{-1}_{a_{i+1}}\cdots T^{-1}_{a_n}0}
\]
is well-defined and uniformly bounded above over all $n$.
\end{lemma}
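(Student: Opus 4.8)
I want to show the product
\[
Q_n := \prod_{i=0}^{n-1} \norm{T^{-1}_{a_{i+1}}\cdots T^{-1}_{a_n}0}
\]
is well-defined (no division by zero occurs in forming the inner fractions) and bounded above uniformly in $n$. The natural object to track is the sequence of \emph{tail approximants} $q_{n,i} := T^{-1}_{a_{i+1}}\cdots T^{-1}_{a_n}0$, so that $Q_n = \prod_{i=0}^{n-1}\norm{q_{n,i}}$. These satisfy the backward recursion $q_{n,n}=0$ and $q_{n,i} = T^{-1}_{a_{i+1}}(q_{n,i+1}) = \frac{1}{q_{n,i+1}+a_{i+1}}$. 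The idea is to compare these tail approximants to the iterates $y_i := T_{a_i}\cdots T_{a_1}x_0$ that appear in the hypothesis, exploiting the algebraic identity underlying Lemma \ref{lemma:prodIntro}: informally, $q_{n,i}$ is close to the ``true tail'' $y_i$, and the hypothesis $\norm{y_i}<1$ forces $\norm{q_{n,i}+a_{i+1}}$ to stay bounded away from $0$, which both rules out division by zero and controls the size of $\norm{q_{n,i}}=\norm{q_{n,i}+a_{i+1}}^{-1}$ when $a_{i+1}\neq 0$.

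**Key steps.** First, I would make the comparison precise: using the same Möbius/fractional-linear algebra that proves Lemma \ref{lemma:prodIntro}, one gets a clean formula for $q_{n,i}-y_i$ (equivalently for $q_{n,i}+a_{i+1}$ in terms of $y_{i+1}$ and the later data), showing that $\norm{q_{n,i}+a_{i+1}}$ differs from $\norm{y_{i+1}+a_{i+1}}$ by a factor that is itself a product of norms of the form appearing in $Q_n$. Second, I would use discreteness of $\Zee$: since $\norm{y_{i+1}}<1$ and $a_{i+1}\in\Zee$, if $a_{i+1}\neq 0$ then $\norm{a_{i+1}}\geq \delta$ for a fixed $\delta=\delta(\Zee)>0$ (the minimal nonzero norm in the lattice), so $\norm{y_{i+1}+a_{i+1}}\geq \delta-1$ is \emph{not} automatically useful — the real point is the reverse triangle inequality gives $\norm{q_{n,i}+a_{i+1}}\ge \norm{a_{i+1}}-\norm{q_{n,i}}$, and one needs an a priori bound $\norm{q_{n,i}}<1$ to proceed. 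So the argument must be run by (backward or forward) induction on $i$, simultaneously establishing $\norm{q_{n,i}}<1$ (hence $q_{n,i}\neq 0$ so the next inversion is legal) and a telescoping bound on the partial products. Third, once $\norm{q_{n,i}}<1$ is known for all $i$, I would telescope: grouping the factors $\norm{q_{n,i}}$ and comparing against the corresponding factors $\prod\norm{y_i}$ via Lemma \ref{lemma:prodIntro} applied to suitable truncations, the product $Q_n$ is bounded by a geometric-type series whose terms decay because of the repeated contraction by inversions inside the unit ball; the uniform bound falls out as the sum of that series.

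**Main obstacle.** The delicate point is exactly the coupling between ``well-defined'' and ``bounded'': I cannot invert $q_{n,i+1}+a_{i+1}$ without knowing $\norm{q_{n,i+1}}<1$, and I cannot get $\norm{q_{n,i}}<1$ without knowing $q_{n,i+1}+a_{i+1}$ is invertible and estimating its norm from below — and the lower bound on $\norm{q_{n,i+1}+a_{i+1}}$ really uses the hypothesis $\norm{y_{i+1}}<1$ together with discreteness, not just the triangle inequality, because when $a_{i+1}=0$ one must argue separately that $q_{n,i+1}$ itself is bounded away from $0$ (this is where the ``$\norm{y_i}<1$ for all $i$'' rather than ``$\norm{y_i}\le 1$'' matters). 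I expect to handle the $a_{i+1}=0$ case by noting it cannot persist and by relating $q_{n,i}$ back to an earlier tail with a nonzero digit. A secondary technical nuisance is keeping the backward recursion (index $n$ fixed, $i$ decreasing) consistent with the forward hypothesis on $y_i$; I would organize the whole estimate around the quantity $\prod_{i=0}^{n}\norm{y_i}\cdot Q_n = \norm{q_{n,0}-x_0}$ from Lemma \ref{lemma:prodIntro}, which is manifestly bounded (it is at most $\norm{q_{n,0}}+\norm{x_0} < 1+\norm{x_0}$ once $\norm{q_{n,0}}<1$ is established), and then divide out the (possibly small) factor $\prod\norm{y_i}$ only after bounding it below using discreteness — which is precisely the role played by hypothesis (1)-type non-vanishing in Theorem \ref{thm:stronglyirrational}, here guaranteed automatically by the strict inequality.
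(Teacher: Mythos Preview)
Your proposal has a genuine gap: it never identifies the one algebraic fact that makes the lemma easy, and the analytic substitutes you offer do not work.

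The paper observes that the product telescopes exactly. Defining denominators $Q[a_i,\ldots,a_n]\in\Zee$ by $Q[\wedge]=1$, $Q[a_n]=a_n$, and $Q[a_i,\ldots,a_n]=a_iQ[a_{i+1},\ldots,a_n]+Q[a_{i+2},\ldots,a_n]$, one has
\[
\prod_{i=0}^{n-1}\norm{T^{-1}_{a_{i+1}}\cdots T^{-1}_{a_n}0}
=\prod_{i=0}^{n-1}\frac{\norm{Q[a_{i+2},\ldots,a_n]}}{\norm{Q[a_{i+1},\ldots,a_n]}}
=\frac{1}{\norm{Q[a_1,\ldots,a_n]}}.
\]
Since $Q_n:=Q[a_1,\ldots,a_n]$ lies in the discrete ring $\Zee$, the uniform bound is simply $\norm{Q_n}\ge 1$ once $Q_n\neq 0$. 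The entire content of the lemma is therefore the \emph{nonvanishing} of $Q_n$, not an analytic size estimate on the tail approximants.

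Your inductive claim $\norm{q_{n,i}}<1$ is the wrong target and is generally false. In improper settings the sequence $\norm{Q[a_i,\ldots,a_n]}$ need not be monotone in $i$, so the ratios $\norm{q_{n,i}}=\norm{Q[a_{i+2},\ldots,a_n]}/\norm{Q[a_{i+1},\ldots,a_n]}$ can exceed $1$; only their product is controlled. Your fallback---use $\prod_i\norm{y_i}\cdot Q_n=\norm{q_{n,0}-x_0}$ and then lower-bound $\prod_i\norm{y_i}$ ``by discreteness''---also fails: the $y_i$ are not in $\Zee$, and in fact Lemma~\ref{thm:non-convergence} forces $\norm{y_n}\not\to 1$, hence $\prod_i\norm{y_i}\to 0$, so no uniform lower bound exists.

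The paper's nonvanishing argument \emph{does} go through the distance--product identity, but in the opposite direction. Inductively, suppose all shorter $Q$'s are nonzero and $Q[a_1,\ldots,a_n]=0$. The recursion gives $P[a_2,\ldots,a_n]Q[a_2,\ldots,a_n]^{-1}=-a_1$, so applying Lemma~\ref{lemma:prodIntro} (in the form of Corollary~\ref{lemma:pqproduct}) to the shifted sequence $a_2,\ldots,a_n$ at the point $T_{a_1}x_0$ yields
\[
\norm{x_0}^{-1}=\norm{-a_1-T_{a_1}x_0}=\frac{\prod_{i=1}^{n}\norm{x_i}}{\norm{Q[a_2,\ldots,a_n]}},
\]
i.e.\ $\prod_{i=0}^{n}\norm{x_i}=\norm{Q[a_2,\ldots,a_n]}$. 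The left side is strictly less than $1$ by hypothesis, the right side is at least $1$ since it is a nonzero element of the discrete ring---contradiction. This is where discreteness actually enters.
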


\begin{lemma}
\label{thm:non-convergence}
The sequence $\norm{T_{a_n}\cdots T_{a_1}x_0}$ does not converge to 1.
\end{lemma}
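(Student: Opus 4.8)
\textbf{Proof plan for Lemma \ref{thm:non-convergence}.}

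The plan is to argue by contradiction: suppose $\norm{T_{a_n}\cdots T_{a_1}x_0}\to 1$ as $n\to\infty$. Write $x_n := T_{a_n}\cdots T_{a_1}x_0$, so that $x_{n} = \frac{1}{x_{n-1}} - a_n$ (with the convention $x_n = 1/x_{n-1}$ after the inversion step, then subtraction of the digit). The hypothesis of the theorem gives $\norm{x_n}<1$ for all $n$, and we are assuming $\norm{x_n}\to 1$. The idea is that if $\norm{x_n}$ is very close to $1$, then $\norm{1/x_n}$ is very close to $1$ as well (from above), so $x_{n+1} = 1/x_n - a_{n+1}$ is a point of norm $<1$ lying very close to the sphere of radius roughly $1$ centered at $-a_{n+1}$. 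Combined with $\norm{x_{n+1}}$ itself being close to $1$, this forces $x_{n+1}$ to lie near the intersection of two spheres, which (for $a_{n+1}\neq 0$) is a codimension-one subsphere — and in particular, as $n\to\infty$, the digit $a_{n+1}$ is constrained to lie in a bounded region, hence (by discreteness of $\Zee$) takes only finitely many values, and $x_n$ accumulates only on a finite union of such subspheres.

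The key steps, in order, are: (1) Reduce to the complex/subalgebra case. Since all the points $x_n$ and digits $a_n$ lie in the (at most $2$-dimensional, commutative, associative) real subalgebra generated by $x_0$ and the $a_n$ — no, this is false in general for quaternions; instead I would work directly in $k$ but use only the norm identity $\norm{pq^{-1}} = \norm{p}/\norm{q}$ and $\norm{1/x}=1/\norm{x}$, which hold in all real division algebras. (2) Set $r_n = \norm{x_n}$, so $r_n<1$ and $r_n\to1$; then $\norm{1/x_n} = 1/r_n \to 1^+$. (3) From $x_{n+1}+a_{n+1} = 1/x_n$ we get $\norm{x_{n+1}+a_{n+1}} = 1/r_n$, so by the triangle inequality $\norm{a_{n+1}} \le \norm{x_{n+1}} + 1/r_n < 1 + 1/r_n < 3$ for $n$ large; hence by discreteness of $\Zee$, only finitely many digit values $a$ occur infinitely often, and in fact $\norm{a_{n+1}}\le 1/r_n - \big(1-\norm{x_{n+1}}\big)$ forces, in the limit, $\norm{a_{n+1}}\le 1$ eventually. (4) Now pass to a subsequence along which $a_{n+1}\equiv a$ is constant and $x_n \to z$, $x_{n+1}\to w$ with $\norm{z}=\norm{w}=1$ and $w = 1/z - a$, i.e. $\norm{w+a} = \norm{1/z} = 1$. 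So both $\norm{w}=1$ and $\norm{w+a}=1$: if $a\neq 0$ this says $\Re(w\bar a) = -\norm{a}^2/2$, a nontrivial linear condition pinning $w$ to a subsphere; if $a = 0$ there is no contradiction yet and one must iterate. (5) Rule out the limiting configuration. This is where I expect the real work: one must show that a bi-infinite (or forward-infinite) orbit $z \mapsto 1/z \mapsto 1/z - a_{n+1} \mapsto \cdots$ entirely contained in the closed unit ball, with all points \emph{on} the unit sphere in the limit, cannot exist — essentially because the inversion $\iota$ is strictly expanding in the \emph{interior} and neutral only exactly on the sphere, so an orbit pinned to the sphere would have to consist of genuine sphere points, but sphere points are mapped by $\iota$ back to the sphere and then translated by $a_{n+1}$ off the closed ball unless $a_{n+1}=0$, giving the periodic orbit $z\mapsto 1/z$; one then checks $1/z$ for $\norm z =1$ equals $\bar z$, and the orbit is $\{z,\bar z\}$ on the sphere, which still needs to be excluded, presumably by the earlier lemmas (Lemma \ref{lemma:prodIntro} and \ref{lemma:nonzero q}) or by the fact that such $x_0$ would have $\norm{x_n}$ constant equal to $1$, contradicting $\norm{x_n}<1$ \emph{strictly}.

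I expect step (5) — ruling out a limiting orbit that hugs the unit sphere — to be the main obstacle, since the map is only \emph{non-strictly} expanding on the boundary and the naive compactness argument does not immediately yield a contradiction; one likely needs a quantitative version of "how much $\iota$ expands" as a function of distance to the sphere, together with the digit-boundedness from step (3), to show that the product $\prod \norm{x_i}$ would have to decay (pushing points into the interior) or that the orbit would have to leave the ball. In the write-up I would isolate this as the core estimate: there exists $c>0$ such that if $\norm{x_n}\ge 1-\delta$ then the next digit being nonzero forces $\norm{x_{n+1}} \le 1 - c$ for small $\delta$, contradicting $\norm{x_{n+1}}\to 1$; and if the digit is zero infinitely often one analyzes the resulting $\iota$-orbit directly. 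The steps (1)–(4) are essentially elementary manipulations with the norm multiplicativity and discreteness, so the paper's contribution here is concentrated in the boundary-expansion estimate of step (5).
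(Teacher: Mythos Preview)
Your steps (1)--(4) are a reasonable start and essentially recover the paper's set $S_1$: limit points of the orbit lie on the intersection of the unit sphere with a translate of it by a nonzero digit. But step (5) contains a genuine gap, and the proposed resolution is wrong on both branches.

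First, the digit can never be zero: if $\norm{x_n}<1$ then $\norm{1/x_n}>1$, so $a_{n+1}=0$ would give $\norm{x_{n+1}}>1$, violating the hypothesis. So the ``$a=0$ infinitely often'' branch is vacuous, and your fallback analysis of the $\iota$-orbit $\{z,\bar z\}$ is irrelevant.

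Second, and more seriously, the estimate you isolate as the ``core'' --- that $\norm{x_n}\ge 1-\delta$ with a nonzero digit forces $\norm{x_{n+1}}\le 1-c$ --- is simply false in the improper setting, which is exactly the setting of the lemma. If $s\in\Sph$ satisfies $\norm{\iota s - a}=1$ for some nonzero $a\in\Zee$ (such $s$ exist whenever $\Zee$ meets the closed ball of radius $2$ nontrivially), then for $x_n$ near $s$ one has $x_{n+1}$ near $\iota s - a\in\Sph$, so $\norm{x_{n+1}}$ is also close to $1$. Your own step (4) already produces these configurations; step (5) contradicts them.

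What the paper actually does is take your observation from (4) seriously and iterate it: convergence to $\Sph=S_0$ forces convergence to the codimension-one set $S_1=\bigcup_{\gamma\neq 0}(\Sph\cap\gamma\Sph)$; convergence to $S_1$ forces convergence to a set $S_2$ built from nontrivial intersections of translates of the \emph{pieces} of $S_1$; and so on. Each $S_j$ is a finite union of spheres of dimension $\le n-j-1$, so after finitely many steps $S_j=\emptyset$. The repulsion estimate you were hoping for (via the inversion identity \eqref{eq:inversionidentity}) is used, but only \emph{relative to} $S_j$: a point that is closest to some $s\in S_j$ far from $S_{j+1}$ gets pushed away from $S_j$, not from $\Sph$ directly. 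The bookkeeping --- defining the pieces, proving $\iota$-symmetry (this is where the sphere-normalizing condition enters), and controlling neighborhoods of transversal sphere intersections --- is the content of Section~\ref{sec:non-convergence}.
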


We now comment on the three lemmas, starting with the proof of Lemma \ref{lemma:prodIntro}:
\begin{proof}[Proof of Lemma \ref{lemma:prodIntro}]
The identity $x-y=x\left(\frac{1}{y}-\frac{1}{x}\right)y$ holds over any real division algebra (note that the octonions are alternative, i.e., associative on subalgebras generated by two elements), giving
\begin{equation}
\label{eq:inversionidentity}
    \norm{x-y}=\norm{1/x-1/y}\norm{x}\norm{y}.
\end{equation}
One calculates:
\begin{align*}
     \norm{T^{-1}_{a_1}\cdots T^{-1}_{a_n}0 - x_0}&=\norm{\frac{1}{T^{-1}_{a_1}\cdots T^{-1}_{a_n}0} - \frac{1}{x_0}}\norm{T^{-1}_{a_1}\cdots T^{-1}_{a_n}0}\norm{x_0}\\&=
     \norm{\left(\frac{1}{T^{-1}_{a_1}\cdots T^{-1}_{a_n}0}-a_1\right) - \left(\frac{1}{x_0}-a_1\right)}\norm{T^{-1}_{a_1}\cdots T^{-1}_{a_n}0}\norm{x_0}\\&=
     \norm{T^{-1}_{a_2}\cdots T^{-1}_{a_n}0-T_{a_1}x_0}\norm{T^{-1}_{a_1}\cdots T^{-1}_{a_n}0}\norm{x_0}.
\end{align*}
Iterating $n$ times gives \eqref{eq:prodIntro}. Note that the inversion identity \eqref{eq:inversionidentity} continues to hold in the more general Iwasawa setting, see Lemma \ref{lemma:inverseFormulaSiegel}.
\end{proof}

To explain Lemma \ref{lemma:nonzero q} better, if one rewrites the finite CF expansion $T^{-1}_{a_1}\dots T^{-1}_{a_n}0$ in a specific way as a rational number $\frac{p_n}{q_n}$, then it can be shown that the product \eqref{eq:product definition of qn} is equal to $|q_n|^{-1}$. Thus Lemma \ref{lemma:nonzero q} is equivalent to stating that $q_n$ is bounded away from zero. While these equivalencies are well-known in the real and complex cases, more care is required in other cases.

We previously proved Lemma \ref{lemma:nonzero q} for the Heisenberg group $\X_\C^1$ in \cite[Lemma~3.20]{LV2015}, as a culmination of several complicated matrix identities. We then proved Lemma \ref{lemma:nonzero q} for all Iwasawa spaces in \cite[Lemma~4.4]{lukyanenko_vandehey_2022}, where we employed the structure theory of hyperbolic manifolds. In this paper we will provide a substantially simplified proof. 

For readers interested in individual cases, we furthermore prove Lemma \ref{lemma:nonzero q} separately in three different settings, with increasing generality. In Section \ref{sec:nonzeroq}, we provide the classical quick proof over $\R$ and $\C$ using matrices. In Section \ref{sec:divisionalgebra}, we work over an arbitrary real division algebra, where matrices become less useful because the determinant is not multiplicative \cite{FAN2003193} and right-sided identities cannot be obtained due to a lack of associativity; this forces us to work with the partial convergents $T^{-1}_{a_m}\cdots T^{-1}_{a_n}0$. In Section \ref{sec:Iwasawa}, we adapt this approach to the broader setting of Iwasawa CFs. Note that in the Iwasawa setting, both quaternionic and octonionic CFs can be treated using real matrices, and a similar CF over $\R^n$ can be introduced.

Our proof of Lemma \ref{thm:non-convergence} is given in Section \ref{sec:non-convergence} and is inspired by previously-known proofs in $\R$ and $\C$. We sketch these proofs to give an idea of the methods, requiring that $a_n$ belong to $\mathbb{Z}$ in the real case, and belong to $\mathbb{Z}[i]$ in the complex case. We write $x_n=T_{a_n}\cdots T_{a_1}x_0$.
\begin{proof}[Sketch of the proof of Lemma \ref{thm:non-convergence} for $\R$ and $\C$]
In the real case, suppose the absolute value of $x_n$ converges to $1$. Then for sufficiently large $n$, either $\norm{x_n-1}<1/2$ or $\norm{x_n-(-1)}<1/2$. Supposing, without loss of generality, that such an $x_n$ is close to $1$, we must have that  $a_{n+1}=2$ and $x_{n+1}=\frac{1}{x_n}-2$ is now close to $-1=\frac{1}{1}-2$. Using \eqref{eq:inversionidentity}, we have:
\(\norm{x_{n+1}-(-1)} = \norm{\frac{1}{x_n}-\frac{1}{1}} = \norm{x_n}^{-1}\norm{1}^{-1}\norm{x_n-1}>\norm{x_n-1}.\)
Thus, once $x_n$ is sufficiently close to $\pm 1$, it gets pushed away from $\pm 1$ under iteration, contradicting the original assumption that it converges to $\pm 1$.

In the complex case, the proof is analogous: if $\norm{x_n}$ converges to $1$, then the requirement that  $x_n$ and $x_{n+1}=\frac{1}{x_n}-a_{n+1}$ must both be close to the unit circle (see Figure \ref{fig:ornament}) implies the stronger statement that $x_n$ must converge to the set $S_1$ (that is, for $\epsilon>0$ and sufficiently large $n$, each $x_n$ is $\epsilon$-close to some point of $S_1$). In particular, $S_1$ consists of those points $z\in\mathbb{C}$ for which there is an $a\in\mathbb{Z}[i]$ with $|z|=|z^{-1}-a|=1$. Again, using \eqref{eq:inversionidentity}, we can show that once $x_n$ is sufficiently close to $S_1$, it must be pushed away from $S_1$ under iteration, contradicting our assumption. %along most of the unit circle \emph{any} choice of $a_{n+1}$ will result in $T_{n+1}x_n$ getting sent far from the unit circle, and there are 8 points $p$ at which a the circle intersects translates of itself, making a choice of $a_{n+1}$ possible. But $x_n$ and $p$ are always transformed in the same way under iteration and $p$ repels $x_n$.
\end{proof}
 
The fact that the set $\{\pm 1\}$ in the real case and the set $S_1$ in the complex case are both discrete sets is crucial to the proof; however, in higher dimensions, the natural choice of $S_1$ is no longer discrete. Thus, to prove Lemma \ref{thm:non-convergence} in full generality, we use a descent argument and build a nested sequence of sets $\Sph = S_0\supset S_1 \supset \ldots \supset S_n = \emptyset$, where each $S_i$, intuitively, consists of points $z\in \Sph$ where there are at least $i$ digits $a$ with $|z|=|z^{-1}-a|=1$. We then prove that convergence to each $S_i$ in fact implies convergence to $S_{i+1}$, eventually leading us to a contradiction.
 
%To prove Lemma \ref{thm:non-convergence} in full generality, we provide a more careful analysis of the allowable digits along the unit sphere, identifying locations near which one or more digits may be used to transform $x_n$. In particular, we build a nested sequence of sets $\Sph=S_0 \supset  \ldots \supset S_n=\emptyset$ such that $S_i$, intuitively, consists of points near which at least $i$ digits are available. We then prove that convergence to each $S_i$ in fact implies convergence to $S_{i+1}$, reaching a contradiction once $i+1=n$. 

\subsection{General Setting}
\label{sec:generalsetting}
We now recall the framework of Iwasawa CFs and state our Main Theorem \ref{thm:IwasawaMain}. We work in an Iwasawa Inversion space $\X_k^n$, built as follows (see \cite{lukyanenko_vandehey_2022}, which was motivated in part by the work of Chousionis-Tyson-Urba\'nski on Iwasawa groups \cite{MR4126256} and by the study of complex hyperbolic space \cite{MR1695450}).

Note that in the case $k=\R$, $\X^n_k$ reduces down to a parabolic model of $\R^n$ with the usual group law and metric. When $k=\C$, we obtain the Heisenberg group.

We start with an \emph{associative} real division algebra $k$ and work with vectors of the form $(a,b,c)\in k\times k^n\times k$ that are null vectors with respect to the Minkowski inner product $\langle {(a,b,c)}, {(d,e,f)}\rangle_{(n+1,1)}=-\overline c d+\overline b \cdot e-\overline a f$. The Minkowski norm of a vector $(a,b,c)$ is given by $\Norm{(a,b,c)}_{(n+1,1)}=-\overline a c+\norm{b}^2-\overline c a$, so null vectors satisfy the condition $\norm{b}^2-2\Re(\overline a c)=0$.

In its projective representation, the space $\X^n_k$ consists of null vectors, up to projective equivalence, with the exclusion of a point \emph{at infinity}. More precisely, the null vectors come in two forms: for the \emph{finite points} we have $a\neq 0$, so we may divide through by $a$ on the right to obtain $(1,u,v)=(1,ba^{-1},ca^{-1})$, which we write as $(u,v)$; if $a=0$, then we also have $b=0$ and may normalize $(a,b,c)$ as $(0,0,1)$, which we refer to as $\infty$. We will therefore think of $\X^n_k$ as the ``paraboloid''
\(\X^n_k=\{(u,v)\in k^n\times k \mst \norm{u}^2-2 \Re(v)=0\}.\)

We may furthermore identify a point $(u,v)$ with the matrix $\threevector{1&0&0}{u&\id&0}{v&\overline u&1}$, which sends null vectors to null vectors and in particular sends the point $(\vec 0,0)$ to the point $(u,v)$. This induces the group law $(u,v)*(u',v')=(u+u', v+\overline u u'+v')$.
The identity is $0=(\vec 0,0)$, and group inverses are given by $(u,v)\mapsto(-u,\overline v)$.

We equip $\X^n_k$ with the gauge $\Norm{(u,v)}=\sqrt{\norm{v}}$ and the Koranyi (also called Cygan or gauge) metric, characterized by  \(d((u,v),(u',v'))=d(0,(-u, \overline v)*(u',v')) \hspace{.15in} \text{and } \hspace{.15in} d(0,(u,v))=\Norm{(u,v)}=\sqrt{\norm{v}}.\)

The \emph{Koranyi inversion} $\iota_0$ is given by the matrix $\threevector{0& 0&-1}{0&\id&0}{-1&0&0}$, which sends $(a,b,c)\mapsto(-c,b,-a)$ and $(u,v)\mapsto(-uv^{-1}, v^{-1})$.
One shows (see Lemma \ref{lemma:inverseFormulaSiegel}) that the Koranyi inversion satisfies\footnote{In particular, the restriction of $\iota_0$ to the unit sphere $\Sph$ is an order-two isometry. Note, however, that when $k\neq \R$ this mapping is \emph{not} the restriction of a global isometry.} two critical properties of the Euclidean inversion:
\(\Norm{\iota_0 x}=\norm{x}^{-1} \hspace{1in} d(\iota_0 x, \iota_0 y)\Norm{x}\Norm{y}=d(x,y).\)

More generally, by an \emph{inversion} $\iota$ we will mean a composition $A\iota_0$ where $A$ is an isometry preserving the origin. This allows us, in particular, to represent the complex inversion $z\mapsto 1/z=\overline z/\norm{z}^2$ using real coordinates in $\X_\mathbb{R}^2$ as  $(x,y)\mapsto (x,-y)/(x^2+y^2) = \overline{\iota_0(x,y)}$.

We will think of an \emph{(Iwasawa) fraction} as a null vector $(q,r,p)$, associated to the point $(u,v)=(rq^{-1}, pq^{-1})$ that then satisfies $2 \Re v = \norm{u}^2$. Note that $q$ and $p$ are one-dimensional over $k$, while $r$ can be a vector. 

\begin{defi}An \emph{(Iwasawa) continued fraction} is an expression of the form $\iota \gamma_1 \iota \gamma_2 \cdots \iota \gamma_n(0)$ or a limit as $n\rightarrow \infty$ where each mapping $\gamma_i$ is an isometry of $\X_n^k$. 
\end{defi}

\begin{remark}
Note that in the context of Iwasawa CFs, our digits are isometric mappings $\gamma$, rather than numbers. This simplifies notation and gives a little more flexibility on the choice of digits (e.g., $\gamma$ might include a rotating or reflecting after translating, although we will not use this flexibility in this paper). Since adding a number is an isometry, this convention is compatible with the more familiar notion of digit.
\end{remark}

We will require two technical conditions on our continued fractions. In both cases, we suppose that the digits $\gamma_i$ lie in a discrete subgroup $\Zee\subset \Isom(\X_n^k)$.  Both of these are satisfied by most, but not all, Iwasawa CFs of interest.
\begin{defi}[Integrality Condition]
Let $\Mod=\langle \Zee, \iota\rangle$ be the \emph{modular group} generated by the possible digits $\Zee$ and the inversion $\iota$. We will say that the continued fraction is \emph{integral} if there is a discrete ring $R$ (necessarily, with unity) such that, for any matrix $M\in \Mod$ we have that the vector $(a,b,c)=M(1,0,0)$ satisfies $a,c\in R$.
\end{defi}

Note that in the above definition we don't expect that $b\in R^n$. Indeed, we often have $b\in \sqrt{2}R^n$.

\begin{defi}[Sphere-normalizing Condition]
\label{defi:inversioncommutes}
Given $\iota$ and $\Zee$, we say that the corresponding CF has the \emph{sphere-normalizing condition} if for every $\gamma\in \Zee$ there exists $\gamma'\in \Zee$ such that for all $s\in\Sph$:
\begin{enumerate}
    \item $\iota \gamma \iota s \in \Sph$ if and only if $\gamma' s\in\Sph$,
    \item for such points, $\iota \gamma \iota s = \gamma' s$.
\end{enumerate}
\end{defi}

We can now state our main result in full generality (cf.~Theorem \ref{thm:stronglyirrational} for alternate assumptions for norm-transcendental points):
\begin{thm}
\label{thm:IwasawaMain}
Consider an integral Iwasawa CF with space $X=\X^n_k$,  inversion $\iota$, and lattice $\Zee \subset \Isom(X)$ and $K$ a subset of the open unit ball $B(0,1)$, and $\overline K$ the closure of $K$.

Suppose one of the following is true:
\begin{enumerate}
    \item  The set of points $s\in \overline K \cap \Sph$ such that, for some $\gamma\in \Zee$, $\gamma \iota s \in \overline K \cap \Sph$, is finite,
    \item $X=\X^n_\R$ and the sphere-normalizing condition holds.
\end{enumerate}
Then, given a sequence $a_n\in \Zee$ and $x_0\in K$ such that ${T_{a_n}\cdots T_{a_1}x_0}\in K$ for all $n\geq 0$, the associated continued fraction converges to $x_0$:
\(x_0=\lim_{n\rightarrow \infty} T^{-1}_{a_1}\cdots T^{-1}_{a_n}0.
\)
\end{thm}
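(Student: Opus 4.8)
The plan is to reduce Theorem~\ref{thm:IwasawaMain} to the three lemmas quoted in the outline, exactly as announced, but carried out in the Iwasawa language with $\iota,\gamma_i$ in place of $\frac{1}{x}-a_i$. The engine is Lemma~\ref{lemma:prodIntro}: using the inversion identity $d(\iota_0 x,\iota_0 y)\Norm{x}\Norm{y}=d(x,y)$ from Lemma~\ref{lemma:inverseFormulaSiegel} (and the fact that the $\gamma_i$ are isometries fixing the origin up to the translation part, so they can be peeled off one at a time), one obtains the telescoping identity
\[
d\bigl(T^{-1}_{a_1}\cdots T^{-1}_{a_n}0,\;x_0\bigr)=\prod_{i=0}^{n}\Norm{T_{a_i}\cdots T_{a_1}x_0}\cdot\prod_{i=0}^{n-1}\Norm{T^{-1}_{a_{i+1}}\cdots T^{-1}_{a_n}0}.
\]
So it suffices to show the right-hand side tends to $0$. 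The second product is handled by Lemma~\ref{lemma:nonzero q} (the integrality hypothesis is precisely what lets one identify this product with $|q_n|^{-1}$ for a denominator $q_n$ lying in the discrete ring $R$, hence bounded away from $0$, so the product is uniformly bounded above). Therefore everything comes down to showing $\prod_{i=0}^{n}\Norm{x_i}\to 0$, where $x_i=T_{a_i}\cdots T_{a_1}x_0$.

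For that last step I would argue as follows. Each factor satisfies $\Norm{x_i}<1$ (this is the standing hypothesis $x_i\in K\subset B(0,1)$, together with $x_0\in K$), so the partial products are nonincreasing and converge to some limit $L\in[0,1)$. If $L>0$, then $\Norm{x_i}\to 1$. This is exactly the situation ruled out by Lemma~\ref{thm:non-convergence} in the division-algebra case; in the Iwasawa case I would invoke the corresponding statement proved via the descent argument sketched in Section~\ref{sec:outline}, whose hypotheses are supplied by the two alternatives in the theorem: alternative~(1) gives the finiteness (discreteness) of the relevant ``boundary'' set $\overline K\cap\Sph$ directly, which is what makes the base of the descent $S_n=\emptyset$ reachable, while alternative~(2) (the case $X=\X^n_\R$ with the sphere-normalizing condition) is the setting in which the sets $S_i$ can be built and the implication ``convergence to $S_i$ implies convergence to $S_{i+1}$'' can be pushed through even though $S_1$ itself need not be discrete. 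In either case $\Norm{x_i}\not\to 1$, so $L=0$, and hence $d(T^{-1}_{a_1}\cdots T^{-1}_{a_n}0,x_0)\to 0$, which is the desired convergence.

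A point to be careful about is the ``assuming the terms are well-defined'' caveat in Lemma~\ref{lemma:prodIntro} and the division-by-zero issue flagged in the footnote: one must check that none of the intermediate points $T^{-1}_{a_{i+1}}\cdots T^{-1}_{a_n}0$ is the point at infinity (equivalently, that the inversions in the finite continued fraction are never applied to a point whose gauge vanishes). In the proper case this is automatic, but here it needs the integrality condition — the relevant denominators live in the discrete ring $R$ and are shown to be nonzero as part of Lemma~\ref{lemma:nonzero q}, so this lemma does double duty (both bounding the second product and guaranteeing well-definedness). Assembling these: Lemma~\ref{lemma:prodIntro} gives the identity, Lemma~\ref{lemma:nonzero q} controls and makes sense of the $q_n$-product, and Lemma~\ref{thm:non-convergence} kills the $\Norm{x_i}$-product, completing the proof.

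\textbf{Main obstacle.} I expect the genuinely hard part to be Lemma~\ref{thm:non-convergence} in the general Iwasawa setting — specifically, constructing the nested family $\Sph=S_0\supset S_1\supset\cdots\supset S_n=\emptyset$ and proving that asymptotic convergence of the orbit $\{x_i\}$ to $S_i$ forces convergence to $S_{i+1}$. The difficulty is that in dimension $\ge 2$ the naive ``resonance set'' $\{z\in\Sph:\exists a,\ \Norm{z}=\Norm{\iota z - a}=1\}$ is not discrete, so the clean one-step ``pushed away from $\pm1$'' argument available over $\R$ and $\C$ fails; one has to stratify by how many digits realize the resonance and run a descent, and it is in controlling this stratification that hypotheses (1) and (2) of the theorem (finiteness of $\overline K\cap\Sph$-resonances, resp. the sphere-normalizing condition in the $\R^n$ case) are consumed. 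By contrast, Lemma~\ref{lemma:prodIntro} is a short computation and Lemma~\ref{lemma:nonzero q}, while requiring care about non-associativity and the failure of determinant multiplicativity, is handled by passing to partial convergents $T^{-1}_{a_m}\cdots T^{-1}_{a_n}0$ and (in the Iwasawa case) to real matrices, as indicated in Sections~\ref{sec:divisionalgebra}--\ref{sec:Iwasawa}.
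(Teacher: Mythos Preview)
Your proposal is correct and matches the paper's approach essentially line for line: reduce to the product identity (Lemma~\ref{lemma:prodIntro}), bound the backward product via the denominator argument (Lemma~\ref{lemma:nonzero q}), and kill the forward product by ruling out $\Norm{x_i}\to 1$ (Lemma~\ref{thm:non-convergence}), with the integrality hypothesis feeding into the second lemma and the two alternatives feeding into the third. The only minor wrinkle is your description of alternative~(1): in the paper this case does not actually run the full descent $S_0\supset S_1\supset\cdots$ but instead uses the finiteness hypothesis directly, \`a la Dani--Nogueira, to get a discrete limit set that repels the orbit in one step; the descent machinery is reserved for alternative~(2).
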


Theorem \ref{thm:IwasawaMain} follows immediately from Lemmas \ref{lemma:prodIntro}, \ref{thm:non-convergence}, and \ref{lemma:nonzero q}. We prove Lemmas \ref{lemma:prodIntro} and \ref{lemma:nonzero q} in the Iwasawa context in Section \ref{sec:Iwasawa}.  The proof of Lemma \ref{thm:non-convergence} depends on the choice of assumptions in Theorem \ref{thm:IwasawaMain}.

Let us sketch the proof of Lemma \ref{thm:non-convergence} under the first assumption, which uses the method of Dani-Nogueira \cite{DN2014} discussed in Section \ref{sec:outline} above, with minor adaptations. Suppose that the iterates $x_i$ converge to the boundary of the ball.  They must then remain close to points $s\in \Sph$ such that for some $\gamma \in \Zee$ we have that $\gamma \iota s\in \overline K \cap \Sph$: otherwise, they would be mapped to the interior of the unit ball. Thus, $x_i$ must be converge to the discrete set $\overline K \cap \Sph$. One shows that it therefore converges to one of the points in the discrete set, which then repels it, a contradiction.

In Section \ref{sec:non-convergence}, we elaborate on the above components, and also prove Lemma \ref{thm:non-convergence} under the second assumption of Theorem \ref{thm:IwasawaMain}, where we must take $k=\R$. 

\subsection{Examples}
\label{sec:examples}

For the purpose of examples, we will restrict ourselves to discussing CF \emph{algorithms}, where in addition to an underlying space $X$, an inversion $\iota$, and a lattice $\Zee\subset \Isom(X)$, we will have a fundamental domain $K\subset B(0,1)$ such that for almost all $x\in K$, there exists a unique $a$ such that $T_a x\in K$. We will then only consider a CF expansion such that $T_{a_n}\cdots T_{a_1}x_0\in K$ for all $n$. For such algorithms, almost every $x$ is associated to a unique sequence $a_1,a_2,\dots$ of digits.

Also, for our examples, we will restrict ourselves to considering those cases where $\Zee$ consists of translations and ignore the possibility of reflections or rotations. As such, we will often write that $\Zee$ equals a given set of numbers, such as $\Zee=\mathbb{Z}$ to indicate that $\Zee$ consists of the translations $x\mapsto x+a$ for $a\in \mathbb{Z}$.

Let us suppose for the moment that $X=\X_\R^n=\R^n$, which includes the possibility of $X$ being a real division algebra. When does the sphere-normalizing condition hold?

In the case where $\iota\vert_\Sph=\id\vert_\Sph$, we trivially get the sphere-normalizing condition with $\gamma'=\gamma$. In the case of a real division algebra, this corresponds to the inversion $\iota(x) = 1/\overline{x}$.

More generally, suppose that there is an order-two isometry $f: X\rightarrow X$ such that on $\Sph$ we have $\iota\vert_\Sph=f\vert_\Sph$. Suppose $\Zee$ is normalized by $f$: that is for each $\gamma\in \Zee$ we have $\gamma'\in \Zee$ such that $f\gamma=\gamma'f$. Then the sphere-normalizing condition is satisfied with this choice of $\gamma'$: for $x\in \Sph$, one has $\iota x = fx$, and thus $\gamma'x = \gamma'f\iota x = f \gamma \iota x$. If $\gamma'x\in\Sph$, then $f \gamma \iota x$ is further equal to $\iota \gamma \iota x$, providing inclusion in $\Sph$ and equivalence of the equations. Conversely, if $\iota \gamma \iota x \in \Sph$, then $\iota \gamma \iota x= f\gamma f x = \gamma'x$, also providing inclusion in $\Sph$ and equivalence of the equations. Thus an inversion like $\iota(x)=1/x$ (which acts on $\Sph$ by conjugation) will satisfy the sphere-normalizing condition provided $\Zee$ is closed under conjugation.

\begin{example}
The regular CF expansion that began this paper consists of $X=\mathbb{R}$, $\iota(x)=1/x$, $\Zee=\Z$, and $K=[0,1)$. A simple variant, called the $\alpha$-CF \cite{ST}, replaces the interval $K=[0,1)$ with the interval $K=[\alpha-1,\alpha)$ for a chosen $\alpha\in [0,1)$.\footnote{Many real CF variants use $\iota(x)=|1/x|$; however, these do not nicely extend to higher dimensions, so we will not consider them here.}
\end{example}

\begin{example}
The well-known A.~Hurwitz complex CFs are built from $X=\mathbb{C}$, $\iota(x)=1/x$, $\Zee=\Z[\ii]$, and $K=\{a+b\ii:a,b\in[-1/2,1/2)\}$. This $K$ is the Dirichlet region around the origin for $\Zee$ with a particular choice of boundary. See Figure \ref{fig:hurwitzandchevron}.
\end{example}

\begin{example}
The J.~Hurwitz complex CFs (which were independently rediscovered by Tanaka a century later) are built from $X=\mathbb{C}$, $\iota(x)=1/x$, $\Zee=\Z[\ii]\cdot(1+\ii)$, and 
\(
K= \left\{a(1+\ii)+b(1-\ii):a,b\in\left[-\frac{1}{2},\frac{1}{2}\right)\right\}.
\)
This is an improper CF algorithm and $\overline{K}\cap \Sph$ consists of the four points $\pm 1, \pm \ii$. 
\end{example}

\begin{example}
Starting with the J.~Hurwitz complex CFs as a template, we may construct an example where the sphere-normalizing condition fails. Suppose we replace the inversion with $\iota(x) = e^{\ii\pi/10}/\overline{x}$. Consider $s=e^{-\ii\pi/10}$ and $\gamma:x\mapsto x-2$. Then $\iota \gamma\iota s = e^{11\ii \pi/10}$. However, $\iota\gamma\iota s - s$ does not belong to $\Zee$, and thus we cannot find a $\gamma'$ with the desired property. However, as noted previously, $\overline{K}\cap \Sph$ is finite in this case, so we can use the other condition in the statement of Theorem \ref{thm:IwasawaMain}.
\end{example}

\begin{example}
In analogy to how one could create $\alpha$-CF expansions from regular CF expansions by moving the interval $[0,1)$, we could create new variants of the A.~Hurwitz complex CF by moving the square $\{a+b\ii:a,b\in[-1/2,1/2)\}$. We could shift this to the right as far as possible (while still maintaining the square shape) to obtain a region 
\(
K=\left\{a+b\ii: a\in\left[ \frac{\sqrt{3}}{2}-1,\frac{\sqrt{3}}{2}\right), b\in \left[-\frac{1}{2},\frac{1}{2}\right)\right\}.
\)
Here, we have an improper CF algorithm, but one where $\overline{K}\cap\Sph$ is finite, so that condition (1) will be trivially satisfied even if $\iota$ is changed so that the sphere-normalizing condition no longer holds.

If we are not worried about maintaining the square shape, then we could push this $K$ farther to the right to obtain the chevron-shaped region
\(
K = \left\{ a+b\ii: b\in \left[ -\frac{1}{2},\frac{1}{2}\right), \norm{a+b\ii}<1, \norm{a+b\ii-1}\ge 1  \right\}.
\)
See Figure \ref{fig:hurwitzandchevron}. Here we no longer have that $\overline{K}\cap\Sph$ is finite, so if the sphere-normalizing condition does not hold, then more careful analysis is required.
\end{example}

\begin{figure}
    \centering
     \hfill{}
     \begin{subfigure}[b]{0.45\textwidth}
         \centering
        \includegraphics[width=\textwidth]{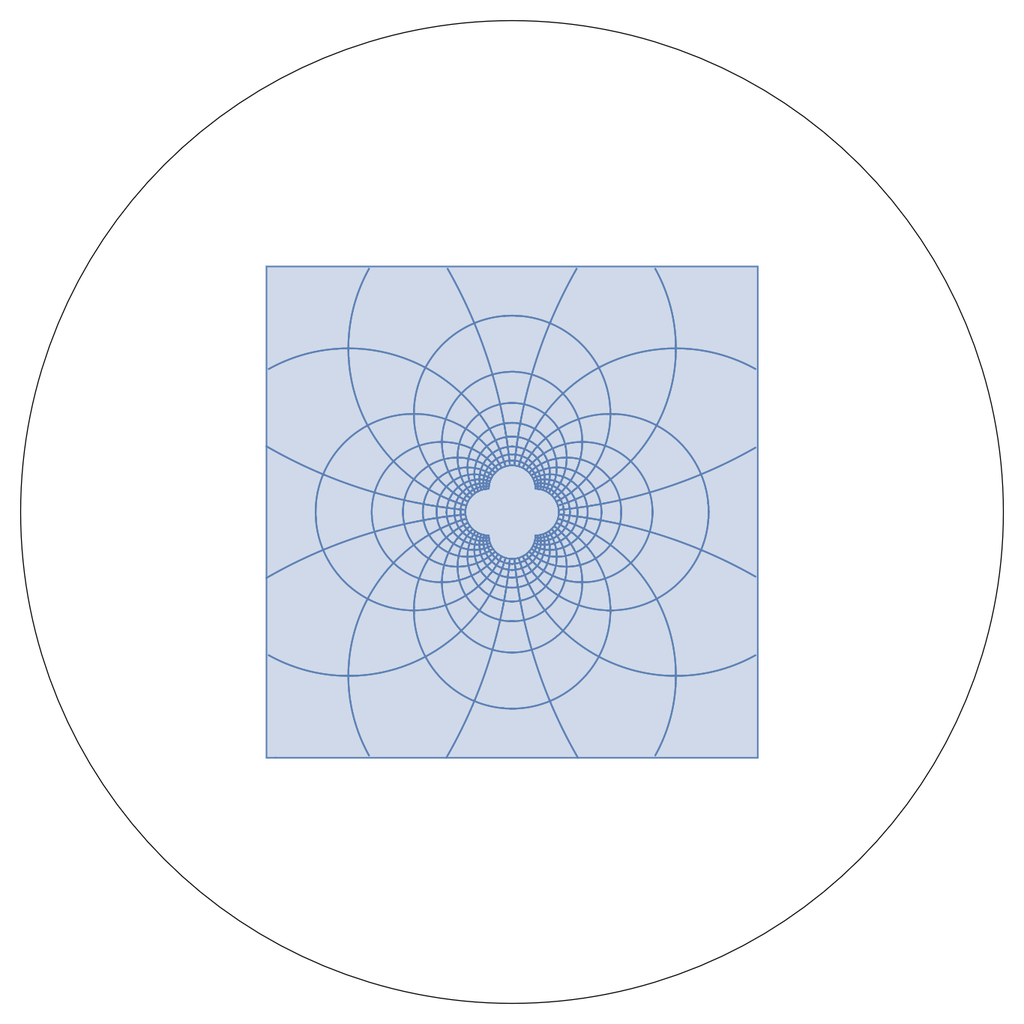}
         \caption{A.~Hurwitz CF}
     \end{subfigure}
     \hfill{}
     \begin{subfigure}[b]{0.45\textwidth}
         \centering
        \includegraphics[width=\textwidth]{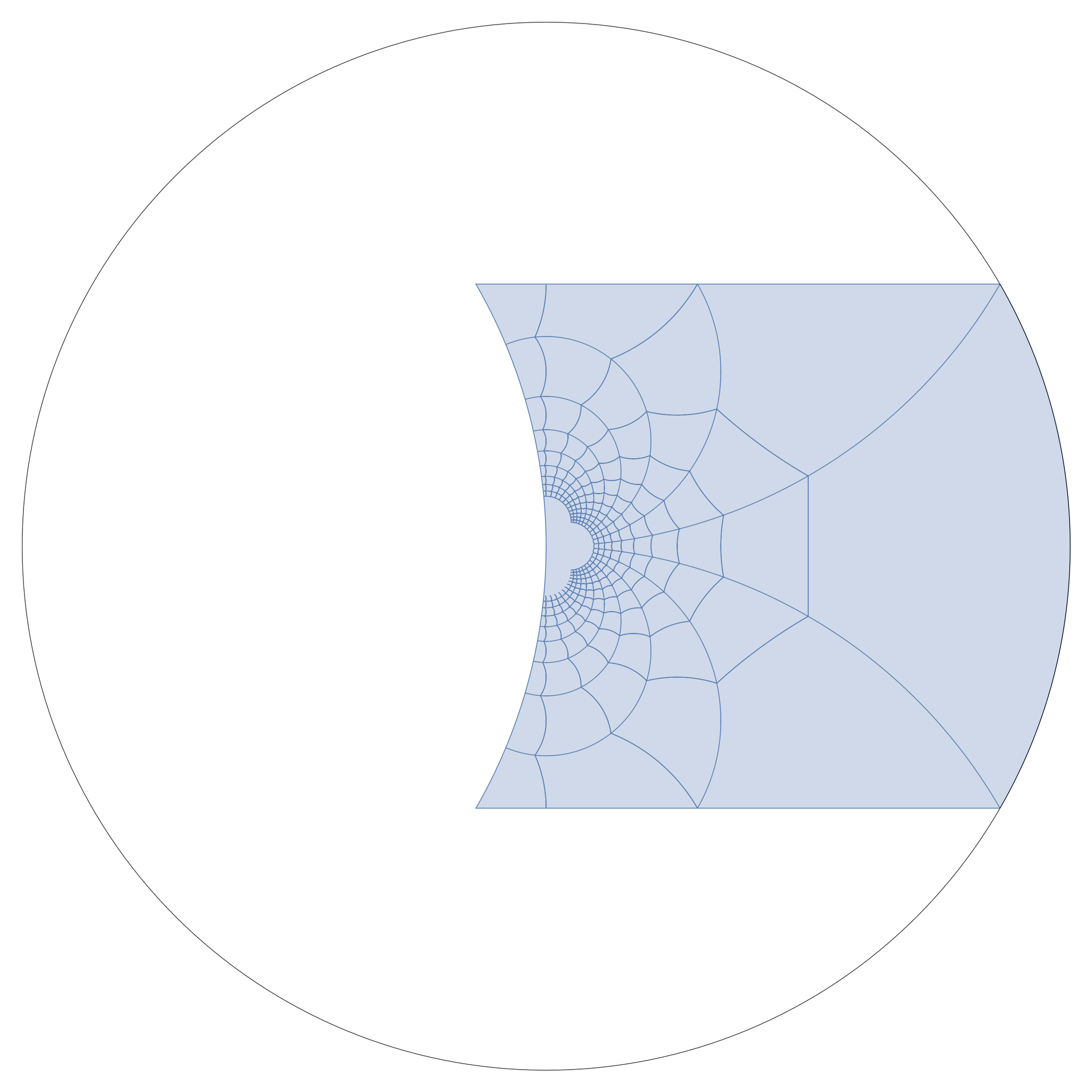}
         \caption{Chevron-type CF}
     \end{subfigure}
     \hfill{}
    \caption{Two complex CF algorithms. In each illustration, the domain $K$ is grey and is tiled by regions where $a_1$ is constant.}
    \label{fig:hurwitzandchevron}
\end{figure}

The above examples showcase results that are known either from classical results or from the aforementioned work of Dani and Nogueira. Subsequent examples, when $K$ is taken to be improper, are new.

\begin{example}
We can consider a three-dimensional continued fraction on $X=\X_\R^3= \R^3$ by considering $\iota(x_1,x_2,x_3)= (x_1,x_2,x_3)/\norm{(x_1,x_2,x_3)}^2$, $\Zee=\mathbb{Z}^3$, and $K=[-1/2,1/2)^3$. Convergence of the CF algorithm for this choice of $K$ was proven by the authors \cite{lukyanenko_vandehey_2022} using the properness condition. 

If a different, chevron-type choice of $K$ is made, then $\overline K$ can intersect the sphere in infinitely many points. This choice of $\iota$ satisfies the sphere-normalizing condition, providing convergence using condition (2) in Theorem \ref{thm:IwasawaMain}. Alternately, if we ignore that this $\iota$ is sphere-normalizing, we can recover convergence if it so happens that $\overline K$ contains only finitely many points $s\in \Sph$ where $\gamma\iota s\in \Sph$ for some $\gamma\in \Zee$ (in particular, all the points on the black lines in Figure \ref{fig:ornament}).
\end{example}

\begin{figure}
     \hfill{}
     \begin{subfigure}[b]{0.45\textwidth}
         \centering
        \includegraphics[width=\textwidth]{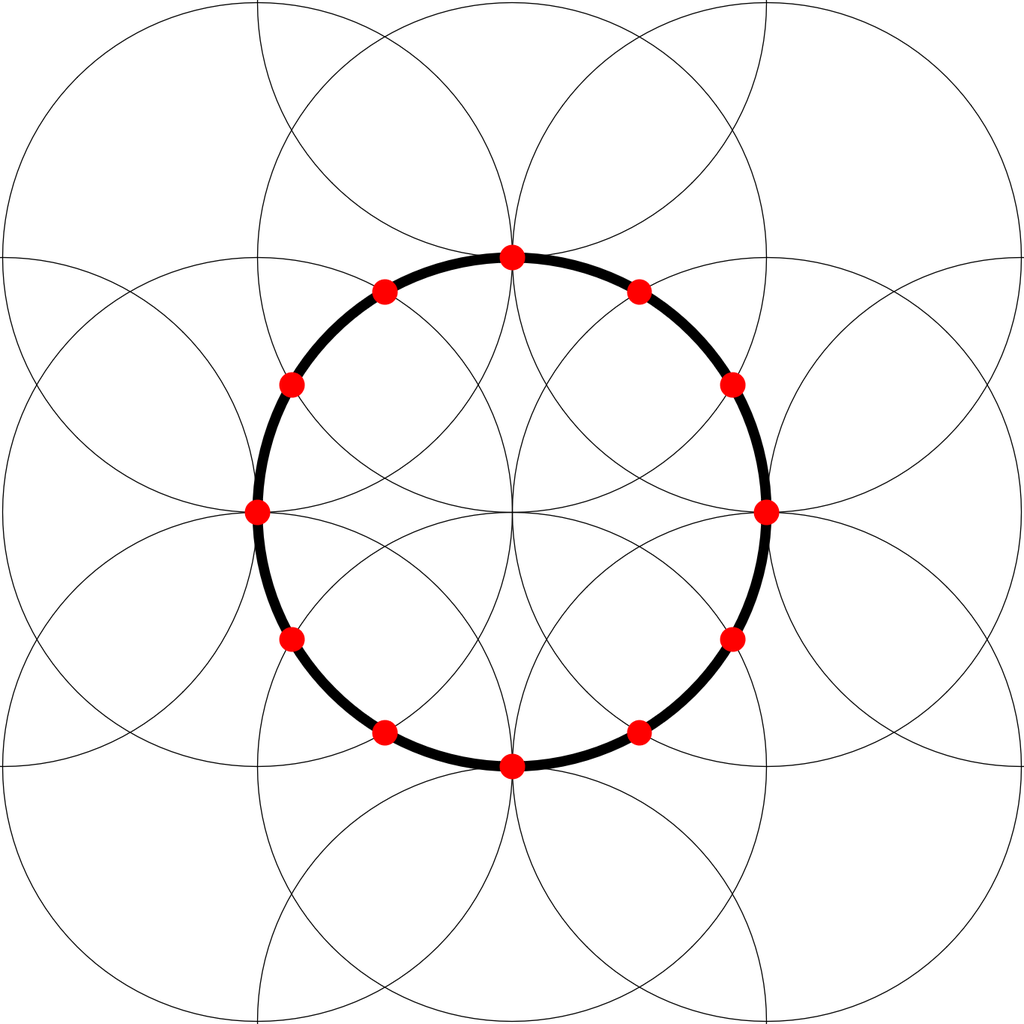}
         \caption{For $\R^2$ with digits in $\Z^2$, the sphere         $\Sph=S_0$, its translates (circles) and resulting intersections $S_1$ (dots).}
     \end{subfigure}
     \hfill{}
     \begin{subfigure}[b]{0.45\textwidth}
         \centering
        \includegraphics[width=\textwidth]{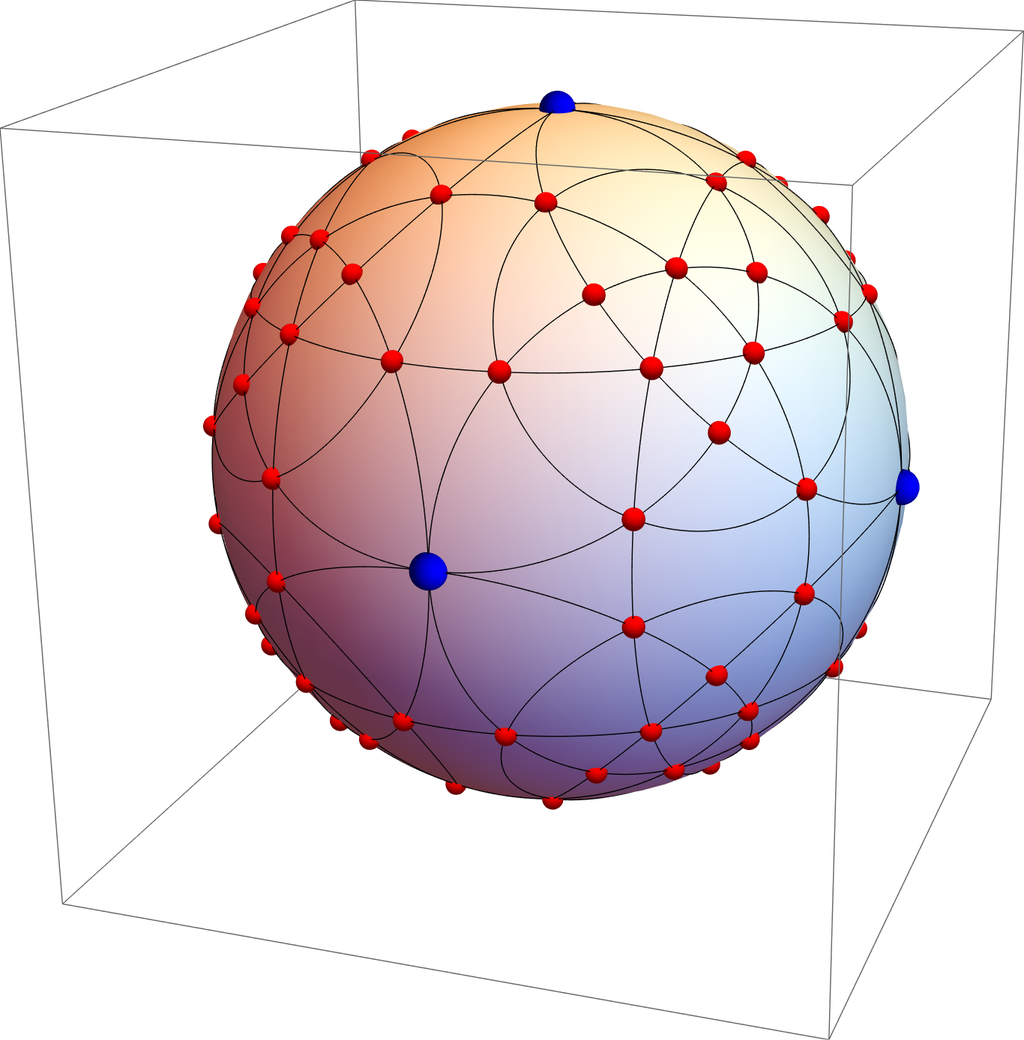}
         \caption{For $\R^3$ with digits in $\Z^3$, the sphere $\Sph=S_0$ and the sets  $S_1$ (circles) and $S_2$ (dots). The blue dots are pieces of $S^1$.}
     \end{subfigure}
     \hfill{}
    \caption{The boundary decomposition of the unit sphere.}
    \label{fig:ornament}
\end{figure}

\begin{example}
Analogously to how we could build a complex CF algorithm with $\Zee$ being any norm-Euclidean ring of integers, we can build a quaternionic CF algorithm on $\X^4_\R=\mathbb H$ with $\Zee$ any norm-Euclidean order\footnote{Recall that an order is a sub-ring that is also a lattice, and an order is norm-Euclidean if the Dirichlet region around the origin is strictly contained inside the unit ball.}. There are exactly three such orders  \cite{Fitzgerald-other}:
\begin{align*}
    & \Z\oplus \Z \ii \oplus \Z \jj \oplus \Z \left( \frac{1+\ii+\jj+\kk}{2}\right) \text{ (The Hurwitz integers }\mathcal{H}\text{)}\\
    & \Z\oplus \Z \ii \oplus \Z \left( \frac{1+\sqrt{3}\jj}{2}\right) \oplus \Z\left( \frac{\ii+\sqrt{3}\kk}{2}\right) \text{ (The Gausenstein integers)}\\
    & \Z \oplus \Z\left( \frac{2+\sqrt{2}\ii-\sqrt{10}\kk}{4}\right) \oplus \Z\left( \frac{2+3\sqrt{2}\ii+\sqrt{10}\kk}{4}\right) \oplus \Z\left( \frac{1+\sqrt{2}\ii+\sqrt{5}\jj}{2}\right).
\end{align*}
Again, we would use $\iota(x)=1/x$, with $\Zee$ as one of the above options, and $K$ as the Dirichlet domain. These would all result in proper CF algorithms. 

There are at least two natural choices of $\Zee$ that yield an improper CF algorithm. The first is the Lipschitz integers $\Z\oplus \Z \ii \oplus \Z \jj \oplus \Z \kk$, whose Dirichlet domain has radius $1$. The second is the an adjusted version of the Hurwitz integers $\mathcal{H}$ given by $\mathcal{H}\cdot(1+\ii)$; recall that the J.~Hurwitz CFs similarly replaced $\mathbb{Z}[\ii]$ with $\mathbb{Z}[\ii]\cdot(1+\ii)$. Note that the radius of the Dirichlet domain for the Hurwitz integers is $2^{-1/2}$ \cite[Thm.~2]{Mahler}, which implies that the radius of the Dirichlet domain for $\mathcal{H}\cdot(1+\ii)$ is $1$.
\end{example}

\begin{example}
For the octonions $\mathbb{O}$, we have two natural choices of lattice $\Zee$. The first are the Cayley integers\footnote{There are several isomorphic copies of the Cayley integers and any could be used.}
    \begin{align*}
        \mathcal{C} &= \mathbb{Z}\oplus \mathbb{Z}e_1\oplus \mathbb{Z}e_2\oplus \mathbb{Z}e_3\\
        &\qquad \oplus \mathbb{Z}h\oplus \mathbb{Z}e_1 h \oplus \mathbb{Z}e_2 h\oplus \mathbb{Z}e_3 h 
    \end{align*}
    with $h=(e_1+e_2+e_3-e_4)/2$. The Dirichlet domain for the Cayley integers has radius $2^{-1/2}$ \cite{Rehm}, so that these would produce a proper CF algorithm. 
    
    Alternately, one could use the lattice $\mathcal{C}\cdot (1+e_1)$, which would produce an improper CF algorithm.
    
    It appears to be an open question whether other norm-Euclidean orders exist over the octonions.
\end{example}

\begin{example}
We can also build an improper CF algorithm on $\X_{\mathbb{H}}^1$ using the Hurwitz integers $\mathcal{H}$ on $\mathbb{H}$ defined above. In particular, let $K_1$ be the Dirichlet domain for $\mathcal{H}$ on $\mathbb{H}$. Let $\mathcal{H}' = \mathbb{Z}i \oplus \mathbb{Z}j \oplus \mathbb{Z}k$, and let $K_2$ be the Dirichlet domain for $\mathcal{H}'$ in $\Im(\mathbb{H})$. In particular, $K_2$ will look like $[-1/2,1/2)^3$. Then using $\iota=\iota_0$ and $\Zee=\mathcal{H}\times \mathcal{H}'$ with $K=K_1\times K_2$ will give us a CF algorithm with radius of $K$ equal to $1$. See the discussion surrounding Example 3.14 in \cite{lukyanenko_vandehey_2022} for more information.
\end{example}

%\subsection{Structure}

\subsection{Acknowledgements}We thank Florin Boca for his comments on the manuscript.

\section{Proof of Lemma \ref{lemma:nonzero q}}
\label{sec:nonzeroq}
We now prove Lemma \ref{lemma:nonzero q}, which interprets the product $\prod_{i=0}^{n-1}\norm{T^{-1}_{a_{i+1}}\cdots T^{-1}_{a_{n}}0}$ as $\norm{1/q_n}$ for a denominator $q_n$. We start with a closely-related lemma over $\R$ and $\C$ in Section \ref{sec:commutativeLemma}, then prove Lemma \ref{lemma:nonzero q} in Section \ref{sec:divisionalgebra} over any real division algebra, and adjust the proof for Iwasawa Inversion Spaces in Section \ref{sec:Iwasawa}.
\subsection{Proof over commutative real division algebras}
\label{sec:commutativeLemma}
Over $\R$ and $\C$, the proof of Theorem \ref{thm:DAmain} is simplified: we may replace Lemmas \ref{lemma:prodIntro} and \ref{lemma:nonzero q} with the following result:
\begin{lemma}
\label{lemma:commutativelemma}
For each $n$, there exist $p_n, q_n\in \Zee$, with $\norm{q_n}\geq 1$, such that \(p_n/q_n=T^{-1}_{a_1}\cdots T^{-1}_{a_n}0\) and such that
the distance from the CF approximant $T^{-1}_{a_1}\cdots T^{-1}_{a_n}0$ to $x_0$ satisfies
\[
\norm{T^{-1}_{a_1}\cdots T^{-1}_{a_n}0-x_0}= \frac{1}{\norm{q_n}}\prod_{i=0}^n \norm{T_{a_i}\cdots T_{a_1}x_0}. \label{eq:commutativelemma equation}
\]
\end{lemma}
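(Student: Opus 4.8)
The plan is to use the standard matrix recursion for continued fractions, which works cleanly over a commutative ring. First I would introduce the matrices $M_i = \flippedtranslationtwomatrix{a_i} = \twomatrix{0}{1}{1}{a_i}$, so that the Möbius action of $M_i$ on $x$ is $x \mapsto \frac{1}{x + a_i} = T^{-1}_{a_i}x$. Then define $\twomatrix{p_n}{p_{n-1}}{q_n}{q_{n-1}} = M_1 M_2 \cdots M_n$ (with the convention $p_0 = 0$, $q_0 = 1$, $p_{-1} = 1$, $q_{-1} = 0$, or whichever normalization makes the base case work). A routine induction shows that the Möbius action of the product $M_1\cdots M_n$ sends $0$ to $p_n/q_n$, i.e.\ $T^{-1}_{a_1}\cdots T^{-1}_{a_n}0 = p_n/q_n$, and since each $a_i \in \Zee$ and $\Zee$ is a ring with unity, $p_n, q_n \in \Zee$.

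Next I would establish the key recursion $q_n = q_{n-1}a_n + q_{n-2}$ (reading off the bottom row of $M_1\cdots M_{n-1} \cdot M_n$), together with the analogous relation for the "tail" convergents. The cleanest route to \eqref{eq:commutativelemma equation} is to connect $\norm{q_n}^{-1}$ to the product $\prod_{i=0}^{n-1}\norm{T^{-1}_{a_{i+1}}\cdots T^{-1}_{a_n}0}$ appearing in Lemma~\ref{lemma:prodIntro}: indeed $T^{-1}_{a_{i+1}}\cdots T^{-1}_{a_n}0$ is a ratio of the tail convergents of $M_{i+1}\cdots M_n$, and telescoping the denominators of these tails across $i = 0, \ldots, n-1$ collapses the product to $1/q_n$ (this is the commutative-case manifestation of the reasoning sketched after Lemma~\ref{lemma:nonzero q} in the introduction). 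Substituting this identity into Lemma~\ref{lemma:prodIntro} yields \eqref{eq:commutativelemma equation} directly.

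It remains to show $\norm{q_n} \geq 1$. Here I would use the hypothesis $\norm{T_{a_n}\cdots T_{a_1}x_0} < 1$ for all $n$, but in fact the bound $\norm{q_n}\geq 1$ should follow from the structure alone once one knows the products are well-defined: combining Lemma~\ref{lemma:prodIntro} with the telescoping identity gives $\norm{T^{-1}_{a_1}\cdots T^{-1}_{a_n}0 - x_0} = \norm{q_n}^{-1}\prod_{i=0}^n \norm{T_{a_i}\cdots T_{a_1}x_0}$, and one can alternatively argue via $|\det M_i| = 1$ (over $\R$ or $\C$) that $p_n q_{n-1} - p_{n-1}q_n = \pm 1$, forcing $q_n$ and $q_{n-1}$ to be coprime in $\Zee$ and in particular not both of norm $<1$; since $\Zee$ is discrete the nonzero element of smallest norm has norm $\geq 1$ (as $1 \in \Zee$ and $\Zee$ is a ring, the value $1$ is attained), and an induction using $q_n = q_{n-1}a_n + q_{n-2}$ together with $\norm{a_n}\geq 1$ propagates $\norm{q_n}\geq 1$ from the base cases.

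The main obstacle I anticipate is the bookkeeping in the telescoping step — verifying carefully that the product of denominators of the tail convergents $T^{-1}_{a_{i+1}}\cdots T^{-1}_{a_n}0$ really collapses to $q_n$, including getting the indices and the base cases of the recursion exactly right, and confirming that none of the intermediate denominators vanish (so that all terms are genuinely well-defined, as the statement requires). Everything else is the classical two-by-two matrix formalism, which goes through verbatim because $\R$ and $\C$ are commutative and the relevant matrices have determinant of norm $1$.
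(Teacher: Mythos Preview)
Your matrix setup and the derivation of the distance formula are fine; going through Lemma~\ref{lemma:prodIntro} and the tail-telescoping identity $\prod_{i=0}^{n-1}\norm{T^{-1}_{a_{i+1}}\cdots T^{-1}_{a_n}0}=\norm{q_n}^{-1}$ is a legitimate route (this is in fact how the paper argues in the non-commutative section), whereas the paper's own proof of this lemma instead telescopes the \emph{forward} iterates via $T^nx=(p_n-q_nx)/(p_{n-1}-q_{n-1}x)$. Either way works over $\R$ or $\C$.

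The genuine gap is in your argument that $q_n\neq 0$. Neither of the mechanisms you suggest succeeds. The coprimality observation $p_nq_{n-1}-p_{n-1}q_n=\pm1$ only tells you that if $q_n=0$ then $q_{n-1}$ is a unit, i.e.\ $\norm{q_{n-1}}=1$; that is not yet a contradiction. And the induction ``$q_n=q_{n-1}a_n+q_{n-2}$ with $\norm{a_n}\ge1$ propagates $\norm{q_n}\ge1$'' is simply false once digits are not positive: already over $\Z$, the choice $a_1=1$, $a_2=-1$ gives $q_0=1$, $q_1=1$, $q_2=0$. The hypothesis $\norm{T_{a_i}\cdots T_{a_1}x_0}<1$ is not decorative here---it is exactly what rules such sequences out.

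What the paper does (and what your proposal is missing) is to feed the point $x_0$ itself through the matrix product: one has
\[
M_n\twovector{x_n}{1}=\frac{1}{x_0\cdots x_{n-1}}\twovector{x_0}{1},
\]
so if $q_n=0$ the bottom entry reads $q_{n-1}x_n=(x_0\cdots x_{n-1})^{-1}$, forcing $\norm{q_{n-1}}=\norm{x_0\cdots x_n}^{-1}>1$. This strictly contradicts $\norm{q_{n-1}}=1$ from the determinant identity. You correctly sensed the obstacle (``confirming that none of the intermediate denominators vanish''), but you need this extra ingredient---the orbit bound on $x_0$---to close it.
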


\begin{proof}
Working over $\R$ or $\C$, we use the correspondence between linear-fractional mappings and 2-by-2 matrices to construct the continued fraction of depth $n$ using a matrix $M_n$. Namely, inversion is encoded by the matrix $\inversiontwomatrix$ and translation by $a_i$ is encoded by  $\translationtwomatrix{a_i}$. Let 
\(M_n =\inversiontwomatrix \translationtwomatrix{a_1}\cdots \inversiontwomatrix \translationtwomatrix{a_n}\)
and

Then $M_n$ encodes $T^{-n}=T_{a_1}^{-1}\cdots T^{-1}_{a_n}$, in the sense that for any $x\in k$ and $\twovector a b = M_n \twovector x 1$ we have that $\frac a b=T^{-n} x$. Likewise, $M_n^{-1}$ encodes $T^n=T_{a_n}\cdots T_{a_1}$. In particular, if we set $\twovector{p_n}{q_n}=M_n \twovector 0 1$, then the continued fraction of depth $n$ is then given by $T^{-n}0=p_n/q_n$. We assume for the moment that $q_n\neq 0$ and focus on the convergence of $p_n/q_n$ to $x$.

Now, an induction argument gives  $M_n=\twomatrix{p_{n-1}}{p_n}{q_{n-1}}{q_n}$ and, since $\det M_n=(-1)^n$, $M_n^{-1}=(-1)^n \twomatrix{q_n}{-p_n}{-q_{n-1}}{p_{n-1}}$. This allows us to calculate the distance between $p_n/q_n$ and $x$, since $\norm{\frac{p_n}{q_n}-x} = \frac{\norm{p_n-q_n x}}{\norm{q_n}}$ happens to be related to $T^nx$.
Namely, from $M_n^{-1}\twovector{x}{1} = (-1)^n \twovector{p_n-q_nx}{p_{n-1}-q_{n-1}x}$ we obtain $T^nx = \frac{p_n-q_nx}{p_{n-1}-q_{n-1}x}$. Multiplying together similar expressions for $T^nx, T^{n-1}x,\ldots, T^1x$, we obtain  $\prod_{i=1}^n T^ix = p_n-q_n x$, which gives the  identity
\(\frac{p_n}{q_n}-x = \frac{\prod_{i=1}^n T^i x}{q_n}\)
which immediately implies \ref{eq:commutativelemma equation}.
 
By discreteness of the ring $\Zee$, we have that $\norm{q_n}\geq 1$ if it is non-zero.

Set $x_n=T^nx$. Then, from the formula $x_{n-1}=T^{-1}_{a_n}x_n =  \cfrac{1}{x_{n}+a_n}$, we obtain
\(
    \inversiontwomatrix \translationtwomatrix{a_n} \twovector{x_n}{1} = \twovector {1}{x_n+a_n} = \frac{1}{x_{n-1}} \twovector{x_{n-1}}{1},
\)
which, by induction, implies
\begin{equation}
\label{eq:qnonzerocommutative}
M_n \twovector{x_n}{1} = \twovector{p_{n-1}x_n+p_n}{q_{n-1}x_n+q_n} = \frac{1}{x_0 \cdots x_{n-1}} \twovector{x_0}{1}
\end{equation}
 
Suppose that, in fact, $q_n=0$ but $q_{n-1}\neq 0$.
On the one hand, from \eqref{eq:qnonzerocommutative} we obtain $q_{n-1}=(x_0\cdots x_n)^{-1}$. Since we have $\norm{x_i} < 1$, this gives $\norm{q_{n-1}}>1$. On the other hand, the identity $\det(M_n)=(-1)^n$ reduces down to $p_n q_{n-1}=(-1)^n$ and therefore $\norm{p_n}=\norm{q_{n-1}}=1$, a contradiction. Thus $q_n$ is always non-zero and this completes the proof.
\end{proof}

\subsection{Proof over an arbitrary real division algebra}
\label{sec:divisionalgebra}

%Given a lattice $\Zee$ and a digit $a\in \Zee$, define the continued fraction map $T_a: X\setminus\{0\} \rightarrow X$, by $T_ax = x^{-1} - a$. Given a sequence of digits $a_1,a_2,\dots\in \mathcal{Z}$ and a starting point $x_0$, we will assume that the \emph{forward iterates}  $x_n=T_{a_n}T_{a_{n-1}} \dots T_{a_1} x_0$ of $x_0$ are all non-zero and belong to the unit ball $B_1(0)$ (if the sequence of digits is finite, the last iterate is permitted to be 0).

The proof of Lemma \ref{lemma:commutativelemma} does not apply over non-commutative real division algebras, since we no longer have a multiplicative determinant \cite{FAN2003193}. We therefore prove the more subtle Lemma \ref{lemma:nonzero q}.

We are interested in approximating $x_0$ with the continued fractions $T_{a_1}^{-1} T_{a_2}^{-1} \cdots T_{a_n}^{-1} 0$, which can be interpreted as fractions $P_nQ_n^{-1}$, for $P_n, Q_n \in \Zee$, by defining (denoting an empty list of digits by $\wedge$):
\begin{align*}
    P[\wedge]&=0 \qquad Q[\wedge]=1\\
P[a_n]&=1 \qquad Q[a_n]=a_n\\
    P[a_i,a_{i+1},\dots, a_n] &= Q[a_{i+1},a_{i+2},\dots, a_n]\\
    Q[a_i,a_{i+1},\dots, a_n] &= a_i Q[a_{i+1},a_{i+2},\dots, a_n]+P[a_{i+1},a_{i+2},\dots, a_n].
\end{align*}
and taking $P_n=P[a_1,\dots, a_n], Q_n=Q[a_1,\dots, a_n]$.

\begin{lemma}
\label{lemma:PQisT}
One has:
\(
P[a_1,\dots, a_n]Q[a_1,\dots, a_n]^{-1} =  T_{a_1}^{-1} T_{a_2}^{-1} \dots T_{a_n}^{-1} 0. %\label{eq:PQ definition}
\)
\begin{proof}Assume the result holds for all strings of length up to length $n-1$. Then,
\begin{align*}
 &T_{a_1}^{-1} T_{a_2}^{-1} \dots T_{a_n}^{-1} 0 \\ &\qquad= T_{a_1}^{-1}   ( P[a_2,\dots, a_n]Q[a_2,\dots, a_n]^{-1} )\\
 &\qquad= (P[a_2,\dots, a_n]Q[a_2,\dots, a_n]^{-1} + a_1)^{-1}\\
 &\qquad = (Q[a_2,\dots, a_n] Q[a_2,\dots, a_n]^{-1})(P[a_2,\dots, a_n]Q[a_2,\dots, a_n]^{-1} + a_1)^{-1}\\
 &\qquad = Q[a_2,\dots, a_n]\left(\left( P[a_2,\dots, a_n]Q[a_2,\dots, a_n]^{-1} + a_1\right)  Q[a_2,\dots, a_n]\right)^{-1}\\
 &\qquad = Q[a_2,\dots, a_n]\left( P[a_2,\dots, a_n]+ a_1 Q[a_2,\dots, a_n]\right)^{-1}\\
 &\qquad = P[a_1,\dots,a_n] Q[a_1,\dots,a_n]^{-1}
\end{align*}
as desired. Here we again use the fact that octonions are alternative in the fourth equality.%We note that here we have used a key fact about the octonions. While not associative, they are alternative, which means that in any subalgebra generated by two elements, associativity still holds. So in particular, $(xx^{-1})y= x(x^{-1}y)$ for any $x,y$ in the octonions.
\end{proof}
\end{lemma}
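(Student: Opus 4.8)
The plan is to induct on the length $n$ of the digit string $[a_1,\dots,a_n]$. The base case $n=0$ is a direct check: $P[\wedge]Q[\wedge]^{-1}=0\cdot 1^{-1}=0$, which is the value at $0$ of the empty composition of maps; note that the single-digit values $P[a_n]=1$, $Q[a_n]=a_n$ are themselves just the $n=1$ instance of the general recursion applied to the empty string, so the recursion alone pins everything down once the empty string is handled.

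For the inductive step I would peel off the outermost map and write $T_{a_1}^{-1}T_{a_2}^{-1}\cdots T_{a_n}^{-1}0=T_{a_1}^{-1}(P'Q'^{-1})$, where, by the inductive hypothesis applied to the length-$(n-1)$ tail $[a_2,\dots,a_n]$, I set $P'=P[a_2,\dots,a_n]$ and $Q'=Q[a_2,\dots,a_n]$. Since $T_{a_1}^{-1}x=(x+a_1)^{-1}$, and since the defining recursion gives $P[a_1,\dots,a_n]=Q'$ and $Q[a_1,\dots,a_n]=a_1Q'+P'$, the claim collapses to the single algebraic identity $(P'Q'^{-1}+a_1)^{-1}=Q'(P'+a_1Q')^{-1}$. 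To establish this I would insert $1=Q'Q'^{-1}$ on the left, rewrite $(Q'Q'^{-1})(P'Q'^{-1}+a_1)^{-1}$ as $Q'\bigl((P'Q'^{-1}+a_1)Q'\bigr)^{-1}$, distribute the inner product, and use $(P'Q'^{-1})Q'=P'$ to arrive at $Q'(P'+a_1Q')^{-1}$.

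The only genuine content beyond unwinding the recursion is the failure of associativity in the octonionic case: the octonions are merely alternative, i.e., associative on any subalgebra generated by two elements (Artin's theorem). I therefore expect the main---though minor---obstacle to be checking that each regrouping above involves at most two elements. This goes through because $Q'^{-1}=\overline{Q'}/\norm{Q'}^2$ lies in the real span of $1$ and $Q'$, hence in the subalgebra generated by $Q'$ alone; so a manipulation of the form $(Q'Q'^{-1})w^{-1}=Q'(wQ')^{-1}$ really only uses the two elements $Q'$ and $w$, while $(P'Q'^{-1})Q'=P'(Q'^{-1}Q')=P'$ uses only $P'$ and $Q'$. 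Throughout, the statement is read under the standing convention that the denominators $Q[\cdot]$ are nonzero so that all inverses are defined; the substantive nonvanishing fact is Lemma~\ref{lemma:nonzero q}, proved separately.
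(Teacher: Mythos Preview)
Your proposal is correct and follows essentially the same route as the paper's proof: induct on $n$, peel off $T_{a_1}^{-1}$, insert $Q'Q'^{-1}$, and regroup using alternativity. You add an explicit base case and a more careful justification of why each regrouping lives in a two-generated subalgebra (noting $Q'^{-1}\in\mathrm{span}_\R\{1,Q'\}$), which the paper leaves implicit, but the argument is otherwise identical.
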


\begin{lemma} 
\label{lemma:productisQ}
One has:
\(
\prod_{i=0}^{n-1} \left| T^{-1}_{a_{i+1}}\cdots T_{a_n}^{-1}0\right| = \frac{1}{|Q_n|},
\)
provided $Q[a_{i},\ldots,a_n]\neq 0$ for all $i\le n$.
\begin{proof}We use the recursive identities above to compute: 
\begin{align*}
    \prod_{i=0}^{n-1} \left| T^{-1}_{a_{i+1}}\cdots T_{a_n}^{-1}0\right|
   &= \prod_{i=0}^{n-1} \left| P[a_{i+1},\dots,a_n]Q[a_{i+1},\dots,a_n]^{-1}\right|\\
   & = \prod_{i=0}^{n-1} \frac{\left| P[a_{i+1},\dots,a_n]\right|}{\left|Q[a_{i+1},\dots,a_n]\right|}= \prod_{i=0}^{n-1} \frac{ \left| Q[a_{i+2},\dots,a_n]\right|}{\left|Q[a_{i+1},\dots,a_n]\right|}\\
   &= \frac{\left|Q[\wedge]\right|}{\left|Q[a_1,\dots,a_n]\right|}= \frac{1}{|Q_n|}.\qedhere
\end{align*}
\end{proof}
\end{lemma}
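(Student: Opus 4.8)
The statement to prove is Lemma~\ref{lemma:productisQ}, asserting that $\prod_{i=0}^{n-1}\norm{T^{-1}_{a_{i+1}}\cdots T^{-1}_{a_n}0} = 1/\norm{Q_n}$ whenever all the relevant denominators $Q[a_i,\dots,a_n]$ are nonzero.

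My plan is a direct telescoping computation using the two recursive facts already available: Lemma~\ref{lemma:PQisT}, which identifies $T^{-1}_{a_{i+1}}\cdots T^{-1}_{a_n}0$ with $P[a_{i+1},\dots,a_n]\,Q[a_{i+1},\dots,a_n]^{-1}$, and the defining recursion $P[a_{i+1},a_{i+2},\dots,a_n] = Q[a_{i+2},\dots,a_n]$. First I would rewrite each factor in the product as $\norm{P[a_{i+1},\dots,a_n]}/\norm{Q[a_{i+1},\dots,a_n]}$, using multiplicativity of the norm on a real division algebra (valid since $\norm{pq^{-1}} = \norm{p}/\norm{q}$ there, and the $Q$'s are assumed nonzero so the inverses exist). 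Then I would substitute $P[a_{i+1},\dots,a_n] = Q[a_{i+2},\dots,a_n]$ in each numerator, turning the product into $\prod_{i=0}^{n-1} \norm{Q[a_{i+2},\dots,a_n]}/\norm{Q[a_{i+1},\dots,a_n]}$, which telescopes: the numerator of the $i$-th term cancels the denominator of the $(i+1)$-th term.

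After telescoping, what survives is $\norm{Q[a_{n+1},\dots,a_n]}$ in the numerator (an empty string, so $Q[\wedge]=1$) over $\norm{Q[a_1,\dots,a_n]} = \norm{Q_n}$ in the denominator, giving exactly $1/\norm{Q_n}$. One should be slightly careful at the top end of the index range: the factor with $i=n-1$ is $\norm{P[a_n]}/\norm{Q[a_n]} = \norm{1}/\norm{a_n}$, and with $i=n-2$ the numerator is $\norm{P[a_{n-1},a_n]} = \norm{Q[a_n]} = \norm{a_n}$, so the cancellation pattern does hold uniformly; the boundary case $n=1$ reduces to $\norm{P[a_1]}/\norm{Q[a_1]} = 1/\norm{a_1} = 1/\norm{Q_1}$, consistent.

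There is essentially no serious obstacle here — the only thing to watch is that every $Q[a_i,\dots,a_n]$ appearing as a denominator (or being inverted inside $P/Q$) is nonzero, which is exactly the hypothesis of the lemma, so the division-algebra norm identity $\norm{pq^{-1}}=\norm{p}\,\norm{q}^{-1}$ applies at each step. No associativity subtleties arise because we only manipulate norms of already-formed elements $P[\cdot], Q[\cdot]$ rather than re-associating products (the alternativity of the octonions was already used in Lemma~\ref{lemma:PQisT}, which we invoke as a black box). So the whole proof is the three-line chain of equalities: replace $T^{-1}$-strings by $P/Q$ via Lemma~\ref{lemma:PQisT}, split the norm, replace $P[a_{i+1},\dots]$ by $Q[a_{i+2},\dots]$, telescope, and read off $1/\norm{Q_n}$.
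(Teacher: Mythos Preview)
Your proposal is correct and follows essentially the same route as the paper's proof: replace each factor by $\norm{P[a_{i+1},\dots,a_n]}/\norm{Q[a_{i+1},\dots,a_n]}$ via Lemma~\ref{lemma:PQisT}, use the recursion $P[a_{i+1},\dots,a_n]=Q[a_{i+2},\dots,a_n]$, and telescope to $\norm{Q[\wedge]}/\norm{Q_n}=1/\norm{Q_n}$. Your extra remarks about multiplicativity of the norm and the role of the nonvanishing hypothesis are accurate but not strictly needed beyond what the paper already assumes.
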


Combining with Lemma \ref{lemma:prodIntro}, we get:
\begin{cor}
\label{lemma:pqproduct}
We have that
\(
\norm{P_nQ_n^{-1}-x_0} = \frac{\prod_{i=0}^n |x_i|}{|Q_n|},
\)
provided $Q[a_i, \ldots, a_n]\neq 0 $ for each $i\le n$.
\end{cor}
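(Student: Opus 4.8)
The plan is to combine the three previously-established ingredients mechanically. Corollary~\ref{lemma:pqproduct} asserts the identity
\(
\norm{P_nQ_n^{-1}-x_0} = \frac{\prod_{i=0}^n |x_i|}{|Q_n|},
\)
where $x_i = T_{a_i}\cdots T_{a_1}x_0$, under the hypothesis that $Q[a_i,\ldots,a_n]\neq 0$ for every $i\le n$. So the work is entirely bookkeeping: first observe that the left-hand side is exactly $\norm{T^{-1}_{a_1}\cdots T^{-1}_{a_n}0 - x_0}$ by Lemma~\ref{lemma:PQisT}, which identifies $P_nQ_n^{-1}$ with the depth-$n$ convergent; this is where we use that $k$ is a real division algebra (the octonions being alternative, as invoked in that lemma's proof).

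Next I would assemble the right-hand side from Lemmas~\ref{lemma:prodIntro} and~\ref{lemma:productisQ}. Lemma~\ref{lemma:prodIntro} gives
\(
\norm{T^{-1}_{a_1}\cdots T^{-1}_{a_n}0-x_0}
= \prod_{i=0}^n \norm{T_{a_i}\cdots T_{a_1}x_0}\;\prod_{i=0}^{n-1}\norm{T^{-1}_{a_{i+1}}\cdots T^{-1}_{a_n}0},
\)
the first factor being precisely $\prod_{i=0}^n|x_i|$. Lemma~\ref{lemma:productisQ} rewrites the second factor as $1/|Q_n|$, valid exactly under the same non-vanishing hypothesis $Q[a_i,\ldots,a_n]\neq 0$ for all $i\le n$ that is carried in the Corollary's statement. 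Multiplying the two gives the claimed formula directly; no estimates or limiting arguments are needed here.

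There is essentially no obstacle: the only subtlety worth flagging explicitly is that the hypothesis ``$Q[a_i,\ldots,a_n]\neq 0$ for each $i\le n$'' is precisely what makes every term $\norm{T^{-1}_{a_{i+1}}\cdots T^{-1}_{a_n}0}= \norm{P[a_{i+1},\ldots,a_n]Q[a_{i+1},\ldots,a_n]^{-1}}$ well-defined, so the products appearing in Lemmas~\ref{lemma:prodIntro} and~\ref{lemma:productisQ} are all meaningful — this is exactly the ``assuming the terms of the latter product are well-defined'' caveat in Lemma~\ref{lemma:prodIntro}. Once that is noted, the proof is a single chain of substitutions, and the right place to insert it is immediately after Lemma~\ref{lemma:productisQ}, as the excerpt already does. (In the later application, Lemma~\ref{lemma:nonzero q} will be what removes the hypothesis by showing $Q_n$ is bounded away from zero, but that is not needed for the Corollary itself.)
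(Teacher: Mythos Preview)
Your proposal is correct and matches the paper's approach exactly: the paper states the corollary immediately after Lemma~\ref{lemma:productisQ} with only the remark ``Combining with Lemma~\ref{lemma:prodIntro}, we get,'' and your write-up simply spells out that combination (together with the identification $P_nQ_n^{-1}=T^{-1}_{a_1}\cdots T^{-1}_{a_n}0$ from Lemma~\ref{lemma:PQisT}).
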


\begin{lemma}
\label{lemma:Qnonzero}
For any $i<n$, we have $Q[a_i, \ldots, a_n]\neq 0$.
\end{lemma}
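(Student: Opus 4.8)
The plan is to argue by contradiction, mimicking the final paragraph of the proof of Lemma~\ref{lemma:commutativelemma} but working with the one-sided convergents $Q[a_i,\ldots,a_n]$ instead of the matrix entries $q_n$, since we no longer have a multiplicative determinant available. Fix $n$ and suppose, for contradiction, that $Q[a_i,\ldots,a_n]=0$ for some $i<n$; choose the \emph{largest} such $i$, so that $Q[a_{i+1},\ldots,a_n]\neq 0$ (and more generally $Q[a_j,\ldots,a_n]\neq 0$ for all $j>i$). This minimality will let us invoke Lemma~\ref{lemma:productisQ} and Corollary~\ref{lemma:pqproduct} for the \emph{truncated} continued fraction starting at digit $a_{i+1}$.

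The key computation is to identify what the vanishing of $Q[a_i,\ldots,a_n]$ says. From the recursion $Q[a_i,\ldots,a_n]=a_i Q[a_{i+1},\ldots,a_n]+P[a_{i+1},\ldots,a_n]$ and $P[a_{i+1},\ldots,a_n]=Q[a_{i+2},\ldots,a_n]$, the hypothesis $Q[a_i,\ldots,a_n]=0$ becomes
\[
a_i = -\,Q[a_{i+2},\ldots,a_n]\,Q[a_{i+1},\ldots,a_n]^{-1} = -\,P[a_{i+1},\ldots,a_n]\,Q[a_{i+1},\ldots,a_n]^{-1},
\]
so that, by Lemma~\ref{lemma:PQisT}, $a_i = -\,T^{-1}_{a_{i+1}}\cdots T^{-1}_{a_n}0$, i.e.\ $\norm{a_i}=\norm{T^{-1}_{a_{i+1}}\cdots T^{-1}_{a_n}0}$. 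On the other hand, by Lemma~\ref{lemma:productisQ} applied to the string $a_{i+1},\ldots,a_n$ (legitimate since all the relevant $Q$'s are nonzero), the product $\prod_{j=i+1}^{n-1}\norm{T^{-1}_{a_{j+1}}\cdots T^{-1}_{a_n}0}$ equals $\norm{Q[a_{i+1},\ldots,a_n]}^{-1}$, and in particular the single factor $\norm{T^{-1}_{a_{i+1}}\cdots T^{-1}_{a_n}0}=\norm{P[a_{i+1},\ldots,a_n]}\,\norm{Q[a_{i+1},\ldots,a_n]}^{-1}=\norm{Q[a_{i+2},\ldots,a_n]}\,\norm{Q[a_{i+1},\ldots,a_n]}^{-1}$. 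Combining, $\norm{a_i}=\norm{Q[a_{i+2},\ldots,a_n]}/\norm{Q[a_{i+1},\ldots,a_n]}$.

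To reach a contradiction I would now run the telescoping identity of Corollary~\ref{lemma:pqproduct} against the norm bound $\norm{x_m}<1$. Concretely, using $x_m=T_{a_m}\cdots T_{a_1}x_0$ and the identity of Lemma~\ref{lemma:prodIntro} (or its corollary) for the \emph{full} depth-$n$ convergent, together with the just-derived relation, one expresses $\norm{a_i}$ as a ratio of products of the $\norm{x_m}$'s; since each $\norm{x_m}<1$, this forces $\norm{a_i}<1$, hence $a_i=0$ by discreteness of $\Zee$ (here $0\in\Zee$ is the only element of norm $<1$). But then $Q[a_i,\ldots,a_n]=Q[a_{i+2},\ldots,a_n]=P[a_{i+1},\ldots,a_n]$, which is nonzero by the maximality of $i$ — a contradiction. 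I expect the main obstacle to be bookkeeping: getting the indices in the telescoping product exactly right so that the cancellation leaves precisely $\norm{a_i}$ as a product of $\norm{x_m}$'s with $m$ in a nonempty range, and making sure the base cases ($i=n-1$, where $Q[a_{n-1},a_n]=a_{n-1}a_n+1$, and the reduction to $Q[a_{n}]=a_n\ne 0$) are handled cleanly. The non-associativity of the octonions is not an issue here because, as in Lemma~\ref{lemma:PQisT}, every product that appears is generated by at most two elements, so alternativity suffices.
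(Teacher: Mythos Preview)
Your setup is essentially the paper's: pick a minimal counterexample (you take the largest $i$; the paper does induction on the length, which amounts to the same thing), deduce from the recursion that $P[a_{i+1},\ldots,a_n]Q[a_{i+1},\ldots,a_n]^{-1}=-a_i$, and then invoke Corollary~\ref{lemma:pqproduct} on the truncated sequence $a_{i+1},\ldots,a_n$ with base point $x_i$. So far so good.

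The gap is in your endgame. You aim to show $\norm{a_i}<1$, but this does \emph{not} follow from what you have. You write that $\norm{a_i}$ will appear ``as a ratio of products of the $\norm{x_m}$'s''; a ratio of products of numbers below $1$ need not be below $1$, and in fact there is no clean expression for $\norm{a_i}=\norm{Q[a_{i+2},\ldots,a_n]}/\norm{Q[a_{i+1},\ldots,a_n]}$ in terms of the $\norm{x_m}$ alone, because the numerator corresponds to a truncation where no $Q$ vanishes and hence no simplification like the one you used for the denominator is available. The detour through $a_i=0$ is a dead end.

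The correct target is $\norm{Q[a_{i+1},\ldots,a_n]}$, not $\norm{a_i}$. From Corollary~\ref{lemma:pqproduct} applied to $a_{i+1},\ldots,a_n$ and the point $x_i$ you get
\[
\norm{-a_i-x_i}=\frac{\prod_{j=i}^{n}\norm{x_j}}{\norm{Q[a_{i+1},\ldots,a_n]}}.
\]
But $x_i=\tfrac{1}{x_{i-1}}-a_i$, so $-a_i-x_i=-x_{i-1}^{-1}$ and the left side equals $\norm{x_{i-1}}^{-1}$. Rearranging gives $\norm{Q[a_{i+1},\ldots,a_n]}=\prod_{j=i-1}^{n}\norm{x_j}<1$, which contradicts the discreteness of $\Zee$ (a nonzero element has norm $\ge 1$). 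This is exactly the paper's argument; once you redirect the punchline from $\norm{a_i}$ to $\norm{Q[a_{i+1},\ldots,a_n]}$, your proof is complete.
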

\begin{proof} Consider the following assertion:

\textbf{Assertion($m$):} Let $\{b_1, \ldots, b_{m}\}$ be a digit sequence and $x\in K$. If for all $0\leq j\leq m$ one has $\norm{T_{b_j} T_{b_{j-1}} \cdots T_{b_1} x}<1$, then for all $0< i\leq m$ one has $Q[b_i, \ldots, b_m]\neq 0$.

The assertion is immediate when $m=0$. Working inductively, we assume it holds for $m\leq n-1$ and prove it for $m=n$.

Let $b_1, \ldots, b_n$ be a digit sequence and $x\in k$ a point such for all $0\leq j\leq m$ one has $\norm{T_{b_j} T_{b_{j-1}} \cdots T_{b_1} x}<1$. Suppose, by way of contradiction, that $Q[b_i, \ldots, b_n](x)=0$ for some $0<i\leq n$. 

It suffices to assume $i=1$. Indeed, if $i>1$, then by the inductive assumption applied to the sequence $b_{i}, \ldots, b_n$ and the point $T_{b_{i-1}}\cdots T_{b_1}x$, we  have that $Q[b_i, \ldots, b_n]\neq 0$.

Next, observe that we may apply Corollary \ref{lemma:pqproduct} to the sequence $b_2, \ldots, b_n$ and the point $T_{b_1}x$ to obtain
\[
\label{eq:firstqzero}
\norm{P[b_2,\dots, b_n]Q[b_2,\dots, b_n]^{-1}-T_{b_1}x} = \frac{ \prod_{i=1}^n |x_i|}{|Q[b_2,\dots, b_n]|}.
\]
By our assumption about $Q_n(x)=0$, we have
\begin{align*}
    0&= Q_n(x) = b_1 Q[b_2,\dots,b_n]+P[b_2,\dots, b_n]\\
    -P[b_2,\dots, b_n]Q[b_2,\dots, b_n]^{-1} &= b_1,
\end{align*}
which then gives:
\(
    \norm{P[b_2,\dots, b_n]Q[b_2,\dots, b_n]^{-1}-T_{b_1}x} = \norm{-b_1-T_{b_1}x } = \norm{b_1+T_{b_1}x} =\norm{x^{-1}} = \norm{x}^{-1}.
\)
Combining this with \eqref{eq:firstqzero}, we have 
\(
\prod_{i=0}^n |x_i| = |Q[b_2,\dots, b_n]|.
\)
The left-hand side is strictly less than $1$, while the right-hand side is at least $1$, since $Q[b_2,\dots,b_n]$ is a non-zero integer. Thus our assumption that $Q_n[b_1,\dots,b_n]=0$ is false.
\end{proof}

\subsection{Proof over Iwasawa spaces}
\label{sec:Iwasawa}

%\footnote{We may allow more general isometries later, as in our most recent paper, but for now let's just do it this way and see if we can recover the same proofs as above.}
We now work in the setting of Iwasawa inversion spaces. We will use matrices, as in Section \ref{sec:commutativeLemma} to define the approximating fractions, but then return to the approach of \ref{sec:divisionalgebra} for the remainder of the proof.

We work with digit sequences of the form $a_i=(\alpha_i, \beta_i)$ satisfying $2\Re \beta_i = \norm{\alpha_i}^2$. The CF map associated to prepending the digit $(\alpha, \beta)$ is then given by the matrix
\(B_{(\alpha,\beta)} =  \threevector{0&0&-1}{0&\id&0}{-1&0&0}\threevector{1 &0 &0}{\alpha &\id &0}{\beta &\overline \alpha &1}=\threevector{-\beta &-\overline \alpha&-1}{\alpha &\id& 0}{-1 &0 &0}
\)
Given a sequence of digits, we obtain the numerators and denominator of the associated CF as follows, starting with $Q[\wedge]=1, R[\wedge]=0, P[\wedge]=0$, by multiplying by $B_{a_{i}}$ on the left:
\begin{align*}
    &Q[a_i,\ldots,a_n]=-\beta Q[a_{i+1},\ldots, a_n]-\overline \alpha R[a_{i+1},\ldots, a_n]-P[a_{i+1},\ldots, a_n]\\
    &R[a_{i},\ldots, a_n]=\alpha Q[a_{i+1},\ldots, a_n]+R[a_{i+1},\ldots, a_n]\\
    &P[a_{i},\ldots, a_n]=-Q[a_{i+1},\ldots, a_n]
\end{align*}

One can also write down similar right-sided relations (note in the Iwasawa framework we don't work over the octonions, so matrix multiplication is associative).

The proof of Lemma \ref{lemma:nonzero q} is immediate as soon as we establish analogs of  Lemma \ref{lemma:PQisT} (relationship between $P/Q$ and $T$), Lemma \ref{lemma:productisQ} (product expression for $Q_n$, giving the distance-product formula in Corollary \ref{lemma:pqproduct}), and Lemma \ref{lemma:Qnonzero} ($Q_n\neq 0$).

The analog of Lemma \ref{lemma:PQisT} is immediate since we are working with an associative system. It therefore suffices to adjust the proofs of  Lemma \ref{lemma:productisQ} and  \ref{lemma:Qnonzero} to the Iwasawa setting. 

For completeness, we first provide a quick proof of the inversion identity for the Koranyi inversion (see also the proof on p.19 of \cite{MR2312336}):
\begin{lemma}
\label{lemma:inverseFormulaSiegel}
The inversion $\iota$ satisfies $d(x,y)=d(\iota x, \iota y) \norm{x}\norm{y}$.
\begin{proof}
We will need the quaternionic identities $\overline{ab}=\overline b \overline a$ and $(\overline a)^{-1} = \overline{a ^{-1}}$.

Let $x=(u,v)$ and $y=(z,w)$. Then $\norm{x}=\norm{v}^{1/2}$ and $\norm{y}=\norm{w}^{1/2}$, and
\(d(x,y)=\norm{(-u,\overline v)*(z,w)}=\norm{(-u+z,\overline v+w-\overline u z)}=\norm{\overline v+w-\overline u z}^{1/2}\)

Inverting $x$ and $y$ and applying the group law from Section \ref{sec:generalsetting}, we have:
\begin{align*}
    d(\iota x, \iota y) &= d\left( \iota \twovector{u}{v}, \iota \twovector z w         \right)
                = d(\left(
                  \twovector{-uv^{-1}}{v^{-1}},
                  \twovector{-zw^{-1}}{w^{-1}}
                  \right)\\
                &= \norm{
                  \twovector{uv^{-1}}{\overline{v^{-1}}}*
                  \twovector{-zw^{-1}}{w^{-1}}
                  }
                = \norm{
                  \twovector{uv^{-1}+-zw^{-1}}{\overline{v^{-1}}+w^{-1}+\overline{uv^{-1}}(-zw^{-1})}
                  }\\
                &= \norm{\overline{v^{-1}}+w^{-1}+\overline{uv^{-1}}(-zw^{-1})}^{1/2} = \norm{\overline{v^{-1}}+w^{-1}+\overline{v^{-1}}\overline{u}(-zw^{-1})}^{1/2}.
\end{align*}
Returning to the claim, we compute:
\begin{align*}
d(\iota x, \iota y)^2\norm{x}^2\norm{y}^2
&=\norm{v}\norm{w}\norm{\overline{v^{-1}}+w^{-1}+\overline{v^{-1}}\overline{u}(-zw^{-1})}\\
&= \norm{ \overline v(\overline{v^{-1}}+w^{-1}+\overline{v^{-1}}\overline{u}(-zw^{-1})) w}\\
&=\norm{\overline v +w - \overline u z} = d(x,y).\qedhere
\end{align*}
\end{proof}
\end{lemma}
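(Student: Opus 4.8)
\textbf{Proof plan for Lemma \ref{lemma:inverseFormulaSiegel}.}
The plan is to verify the identity $d(x,y) = d(\iota x, \iota y)\Norm{x}\Norm{y}$ by a direct computation in coordinates, reducing the geometric statement to a one-line quaternionic algebraic identity. First I would fix finite points $x = (u,v)$ and $y = (z,w)$ of $\X^n_k$, so that by definition of the gauge $\Norm{x} = \sqrt{\norm{v}}$, $\Norm{y} = \sqrt{\norm{w}}$, and by the characterization of the Koranyi metric together with the explicit group law $(u,v)*(u',v')=(u+u',v+\overline u u'+v')$ and inversion $(u,v)\mapsto(-u,\overline v)$, one has $d(x,y) = \Norm{(-u,\overline v)*(z,w)} = \sqrt{\norm{\overline v + w - \overline u z}}$. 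This is the left-hand side, expressed as (the square root of) a norm of a single element of $k$.

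Next I would compute the right-hand side the same way: applying the matrix form of $\iota_0$ gives $\iota x = (-uv^{-1}, v^{-1})$ and $\iota y = (-zw^{-1}, w^{-1})$, and feeding these through the metric formula yields $d(\iota x, \iota y) = \sqrt{\norm{\overline{v^{-1}} + w^{-1} + \overline{uv^{-1}}(-zw^{-1})}}$. Using the quaternionic identities $\overline{ab} = \overline b\,\overline a$ and $(\overline a)^{-1} = \overline{a^{-1}}$ (valid since in the Iwasawa setting $k$ is an associative division algebra), rewrite $\overline{uv^{-1}} = \overline{v^{-1}}\,\overline u = \overline{v}^{-1}\overline u$. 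Then multiply through: $d(\iota x,\iota y)^2\Norm{x}^2\Norm{y}^2 = \norm{v}\,\norm{w}\,\norm{\overline{v^{-1}} + w^{-1} + \overline{v^{-1}}\overline u(-zw^{-1})}$, and since $\norm{\cdot}$ is multiplicative on $k$ this equals $\norm{\overline v\bigl(\overline{v^{-1}} + w^{-1} + \overline{v^{-1}}\overline u(-zw^{-1})\bigr)w}$. Expanding the product inside, the three terms collapse to $\overline v + w - \overline u z$, which is exactly $d(x,y)^2$; taking square roots finishes the finite-point case.

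The remaining (minor) issue is the behavior at the point at infinity: one should check separately that the identity still makes sense and holds when $x$ or $y$ equals $\infty = (0,0,1)$, interpreting $\Norm{\infty} = \infty$ and $\iota_0\infty = 0$ appropriately, so that the displayed formula reads as a limiting statement $d(x,\infty) = d(\iota x, 0)\Norm{x}\cdot\infty$ consistently; since $\iota_0$ is an involution this is automatic from the finite case by symmetry. The only real care needed — and the step most likely to contain a sign or conjugation slip — is the bookkeeping in the expansion $\overline v(\overline{v^{-1}} + w^{-1} + \overline{v^{-1}}\overline u(-zw^{-1}))w$: one must use associativity and the conjugation identities in exactly the right order, which is why the hypothesis that $k$ is associative (not merely alternative) is used here, in contrast to Lemma \ref{lemma:prodIntro}.
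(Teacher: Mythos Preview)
Your proposal is correct and follows essentially the same computation as the paper's own proof: write out $d(x,y)$ and $d(\iota x,\iota y)$ in coordinates via the group law, use the conjugation identities $\overline{ab}=\overline b\,\overline a$ and $(\overline a)^{-1}=\overline{a^{-1}}$, and then absorb the factors $\norm{v}$ and $\norm{w}$ inside the norm using multiplicativity to collapse the expression to $\norm{\overline v + w - \overline u z}$. The only addition is your remark about the point at infinity, which the paper does not treat explicitly.
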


Lemma \ref{lemma:productisQ} and Corollary \ref{lemma:pqproduct} take the following form in the Iwasawa setting, with the same proof. Note that we are using the gauge metric given by $\norm{(u,v)}=\norm{v}^{1/2}$, which affects the exponent on $\norm{Q_n}$.
\begin{lemma}We have that
\(\left| T^{-1}_{a_{i+1}}\cdots T_{a_n}^{-1}0\right| = \frac{1}{\norm{Q_n}^{1/2}}\)
provided $Q[a_i, \ldots, a_n]\neq 0$ for each $i\leq n$.
\end{lemma}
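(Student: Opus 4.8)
The plan is to run the proof of Lemma \ref{lemma:productisQ} essentially verbatim, replacing the division-algebra fraction $P/Q$ by the Iwasawa fraction (null vector) $(Q,R,P)$ and carrying along the square root that the gauge $\Norm{(u,v)}=\norm{v}^{1/2}$ introduces. The one genuinely new input is the Iwasawa analog of Lemma \ref{lemma:PQisT}, which is immediate because matrix multiplication is associative in this setting: each $B_{a_i}$ is the matrix of the map $T^{-1}_{a_i}$ that prepends the digit $a_i$, so $B_{a_{i+1}}\cdots B_{a_n}$ encodes $T^{-1}_{a_{i+1}}\cdots T^{-1}_{a_n}$, and applying it to the column vector $(1,0,0)$ --- the homogeneous representative of the origin $0=(\vec 0,0)$ --- produces the null vector $(Q[a_{i+1},\ldots,a_n],R[a_{i+1},\ldots,a_n],P[a_{i+1},\ldots,a_n])$, which therefore represents the point $T^{-1}_{a_{i+1}}\cdots T^{-1}_{a_n}0$. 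Since $Q[a_{i+1},\ldots,a_n]\neq 0$ by hypothesis, this is a finite point, namely $(R[a_{i+1},\ldots,a_n]Q[a_{i+1},\ldots,a_n]^{-1},\,P[a_{i+1},\ldots,a_n]Q[a_{i+1},\ldots,a_n]^{-1})$, so the inverse $Q[a_{i+1},\ldots,a_n]^{-1}$ is meaningful throughout.

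From here I would first apply the gauge formula to obtain
\(
\norm{T^{-1}_{a_{i+1}}\cdots T^{-1}_{a_n}0}=\norm{P[a_{i+1},\ldots,a_n]\,Q[a_{i+1},\ldots,a_n]^{-1}}^{1/2},
\)
then substitute the recursion $P[a_{i+1},\ldots,a_n]=-Q[a_{i+2},\ldots,a_n]$ (with the convention $Q[\wedge]=1$) and invoke multiplicativity of the Euclidean norm on $k$ --- legitimate here since the Iwasawa construction only uses associative $k\in\{\R,\C,\mathbb{H}\}$, for which $\norm{xy}=\norm{x}\norm{y}$ and hence $\norm{xy^{-1}}=\norm{x}/\norm{y}$ --- to rewrite the right-hand side as $(\norm{Q[a_{i+2},\ldots,a_n]}/\norm{Q[a_{i+1},\ldots,a_n]})^{1/2}$. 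Multiplying over $i=0,1,\ldots,n-1$ then telescopes:
\(
\prod_{i=0}^{n-1}\norm{T^{-1}_{a_{i+1}}\cdots T^{-1}_{a_n}0}=\left(\frac{\norm{Q[\wedge]}}{\norm{Q[a_1,\ldots,a_n]}}\right)^{1/2}=\frac{1}{\norm{Q_n}^{1/2}},
\)
which is the asserted identity (carrying the $\prod_{i=0}^{n-1}$ on the left, in parallel with Lemma \ref{lemma:productisQ}). Feeding this into the Iwasawa form of Lemma \ref{lemma:prodIntro} (valid by Lemma \ref{lemma:inverseFormulaSiegel}) yields the Iwasawa analog of Corollary \ref{lemma:pqproduct}: $d(T^{-1}_{a_1}\cdots T^{-1}_{a_n}0,x_0)=\norm{Q_n}^{-1/2}\prod_{i=0}^n\norm{x_i}$.

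I do not anticipate a real obstacle, as the computation is formally identical to the commutative and division-algebra cases; the points meriting care are (i) the bookkeeping of the exponent $1/2$ coming from the gauge metric, which is the only reason $\norm{Q_n}$ appears to the power $-1/2$ rather than $-1$; (ii) checking that each intermediate null vector $B_{a_{i+1}}\cdots B_{a_n}(1,0,0)$ has nonzero first coordinate, so that it descends to a finite point and $P[a_{i+1},\ldots,a_n]Q[a_{i+1},\ldots,a_n]^{-1}$ is well-defined --- this is exactly the standing hypothesis $Q[a_i,\ldots,a_n]\neq 0$ for all $i\leq n$; and (iii) noting that, unlike Section \ref{sec:divisionalgebra}, no non-associativity subtlety intervenes, since the Iwasawa setting never involves the octonions and the matrix/fraction dictionary (the analog of Lemma \ref{lemma:PQisT}) is therefore available directly.
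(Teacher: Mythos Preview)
Your proposal is correct and follows essentially the same approach as the paper, which explicitly states that this lemma holds ``with the same proof'' as Lemma~\ref{lemma:productisQ}: apply the gauge $\Norm{(u,v)}=\norm{v}^{1/2}$, use the recursion $P[a_{i+1},\ldots,a_n]=-Q[a_{i+2},\ldots,a_n]$, and telescope. Your observation that the displayed statement is missing the $\prod_{i=0}^{n-1}$ on the left (and should be read in parallel with Lemma~\ref{lemma:productisQ}) is also correct.
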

\begin{cor}We have that
\(d((R_nQ_n^{-1}, P_nQ_n^{-1}), x_0) = \frac{\prod_{i=0}^n \norm{x_i}}{\norm{Q_n}^{1/2}}\)
provided $Q[a_i, \ldots, a_n]\neq 0$ for each $i\leq n$.
\end{cor}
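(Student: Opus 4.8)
The plan is to reproduce, almost verbatim, the argument of Lemma~\ref{lemma:productisQ} together with Corollary~\ref{lemma:pqproduct}, now feeding in the three Iwasawa analogs already established in this section. These are: (i) the identification $T^{-1}_{a_1}\cdots T^{-1}_{a_n}0=(R_nQ_n^{-1},P_nQ_n^{-1})$, that is, the analog of Lemma~\ref{lemma:PQisT}, which is immediate here because the Iwasawa matrix system is associative; (ii) the distance--product identity of Lemma~\ref{lemma:prodIntro}, which continues to hold with $d(\cdot,\cdot)$ in place of $\norm{\cdot-\cdot}$ and the gauge $\norm{(u,v)}=\norm{v}^{1/2}$ in place of the Euclidean norm, because the inversion identity \eqref{eq:inversionidentity} survives as Lemma~\ref{lemma:inverseFormulaSiegel}; and (iii) the product formula $\prod_{i=0}^{n-1}\norm{T^{-1}_{a_{i+1}}\cdots T^{-1}_{a_n}0}=\norm{Q_n}^{-1/2}$ of the lemma immediately above.

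Writing $x_i=T_{a_i}\cdots T_{a_1}x_0$, item (ii) gives
\(
d\left(T^{-1}_{a_1}\cdots T^{-1}_{a_n}0,\;x_0\right)=\left(\prod_{i=0}^n\norm{x_i}\right)\left(\prod_{i=0}^{n-1}\norm{T^{-1}_{a_{i+1}}\cdots T^{-1}_{a_n}0}\right).
\)
Substituting item (iii) for the second factor replaces it by $\norm{Q_n}^{-1/2}$, and rewriting the point $T^{-1}_{a_1}\cdots T^{-1}_{a_n}0$ on the left as $(R_nQ_n^{-1},P_nQ_n^{-1})$ via item (i) yields exactly the asserted identity. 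The standing hypothesis that $Q[a_i,\ldots,a_n]\neq 0$ for each $i\le n$ is precisely what guarantees that every fraction $R[a_i,\ldots,a_n]Q[a_i,\ldots,a_n]^{-1}$ and $P[a_i,\ldots,a_n]Q[a_i,\ldots,a_n]^{-1}$, hence every factor in both products, is well-defined.

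The one step that requires more than transcription is the telescoping behind item (iii). Using $\norm{(u,v)}=\norm{v}^{1/2}$, the matrix recursion $P[a_{i+1},\ldots,a_n]=-Q[a_{i+2},\ldots,a_n]$, and multiplicativity of the norm on $k$, one finds $\norm{T^{-1}_{a_{i+1}}\cdots T^{-1}_{a_n}0}=\norm{Q[a_{i+2},\ldots,a_n]}^{1/2}/\norm{Q[a_{i+1},\ldots,a_n]}^{1/2}$, so the product over $i=0,\ldots,n-1$ collapses to $\norm{Q[\wedge]}^{1/2}/\norm{Q_n}^{1/2}=\norm{Q_n}^{-1/2}$. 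This is where the exponent $1/2$ --- the only discrepancy with the real-division-algebra version --- appears, and it should be flagged explicitly as coming from the gauge being $\norm{v}^{1/2}$ rather than $\norm{v}$. I expect no genuine obstacle beyond this bookkeeping once the recursions for $P,R,Q$ and Lemma~\ref{lemma:inverseFormulaSiegel} are in hand.
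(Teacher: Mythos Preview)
Your proposal is correct and follows essentially the same route as the paper: the paper states that this corollary (together with the preceding lemma) ``take[s] the following form in the Iwasawa setting, with the same proof,'' meaning one combines the product formula $\prod_{i=0}^{n-1}\norm{T^{-1}_{a_{i+1}}\cdots T^{-1}_{a_n}0}=\norm{Q_n}^{-1/2}$ with Lemma~\ref{lemma:prodIntro} exactly as in Corollary~\ref{lemma:pqproduct}. Your explicit unpacking of the telescoping and of where the exponent $1/2$ enters via the gauge $\norm{(u,v)}=\norm{v}^{1/2}$ is a helpful elaboration of what the paper leaves implicit.
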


Lastly, we prove an analogue of Lemma \ref{lemma:Qnonzero}:
\begin{lemma}For any $i<n$, we have $Q[a_i, \ldots, a_n]\neq 0$.
\begin{proof}
The proof is parallel to that of Lemma \ref{lemma:Qnonzero}, apart from the following calculation:

Recall that $(Q_n,R_n,P_n)$ is always a null vector for the Minkowski inner product, satisfying $\norm{R_n}^2-2\Re(\overline Q_n P_n)=0$. If we have $Q_n=0$, then we conclude that $R_n=0$ as well. It follows that $(Q_n,R_n,P_n)=(0,0,P_n)$, corresponding to the point at infinity. Thus, the inverse of $(Q_n,R_n,P_n)$ corresponds to the origin, and  therefore $( R[b_2, \ldots, b_n]Q[b_2, \ldots, b_n]^{-1}, P[b_2, \ldots, b_n]Q[b_2, \ldots, b_n]^{-1})=(-\alpha_1, \overline{\beta_1})$, where $(-\alpha_1, \overline{\beta_1})$ is the additive inverse of $b_1=(\alpha_1, \beta_1)$.

We then compute:
\begin{align*}
    d((R[b_2,\dots, b_n]&Q[b_2,\dots, b_n]^{-1},P[b_2,\dots, b_n]Q[b_2,\dots, b_n]^{-1}),T_{b_1}x) \\
    &=  d((-\alpha_1, \overline{\beta_1}), T_{b_1}x)
    = \norm{b_1*T_{b_1}x} = \norm{\iota x}= \norm{x}^{-1}.
\end{align*}
The rest of the proof is as in Lemma \ref{lemma:Qnonzero}. Note that by the integrality assumption of Theorem \ref{thm:IwasawaMain}, $Q[a_i, \ldots, a_n]$ is contained in a discrete ring $R$, so we again have that if $Q[a_i, \ldots, a_n]\neq 0$ then $\norm{Q[a_i, \ldots, a_n]}\geq 1$, since otherwise taking powers of $Q[a_i, \ldots, a_n]$ would give elements of $R$ that are arbitrarily close to 0.
\end{proof}
\end{lemma}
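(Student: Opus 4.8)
The plan is to mimic the proof of Lemma~\ref{lemma:Qnonzero} essentially verbatim, changing only the single computation that used division-algebra arithmetic. I would set up the same induction on the length $n$ of the digit string: prove the assertion that if $b_1,\dots,b_n\in\Zee$ and $x\in K$ satisfy $\norm{T_{b_j}\cdots T_{b_1}x}<1$ for all $0\le j\le n$, then $Q[b_i,\dots,b_n]\ne 0$ for every $0<i\le n$. The case $n=0$ is trivial. For the inductive step, suppose for contradiction that $Q_n:=Q[b_1,\dots,b_n]=0$. Applying the inductive hypothesis to the tail string $b_i,\dots,b_n$ and the point $T_{b_{i-1}}\cdots T_{b_1}x$ reduces us to $i=1$ and simultaneously furnishes $Q[b_2,\dots,b_n]\ne 0$, so that the Iwasawa form of Corollary~\ref{lemma:pqproduct} may be applied to the tail string started at $T_{b_1}x$.

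The new ingredient is how one exploits $Q_n=0$. Because $(Q_n,R_n,P_n)$ is by construction a null vector for the Minkowski form, it satisfies $\norm{R_n}^2=2\Re(\overline{Q_n}P_n)$; with $Q_n=0$ this forces $R_n=0$, so $(Q_n,R_n,P_n)\sim(0,0,1)=\infty$. Since $B_{b_1}=\iota_0\circ(\text{translation by }b_1)$ carries $(Q[b_2,\dots,b_n],R[b_2,\dots,b_n],P[b_2,\dots,b_n])$ onto $(Q_n,R_n,P_n)=\infty$ and $\iota_0$ interchanges $\infty$ with the origin, the predecessor fraction $\big(R[b_2,\dots,b_n]Q[b_2,\dots,b_n]^{-1},\,P[b_2,\dots,b_n]Q[b_2,\dots,b_n]^{-1}\big)$ must equal the group inverse $(-\alpha_1,\overline{\beta_1})$ of the digit $b_1=(\alpha_1,\beta_1)$. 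Then, using left-invariance of the Koranyi metric and the inversion identity of Lemma~\ref{lemma:inverseFormulaSiegel}, one computes $d\big((-\alpha_1,\overline{\beta_1}),\,T_{b_1}x\big)=\norm{b_1*T_{b_1}x}=\norm{\iota_0 x}=\norm{x}^{-1}$.

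It remains to combine this with the tail distance--product formula: writing $x_0:=x$ and $x_i:=T_{b_i}\cdots T_{b_1}x$, Corollary~\ref{lemma:pqproduct} applied to $b_2,\dots,b_n$ started at $T_{b_1}x$ evaluates the same distance as $\prod_{i=1}^{n}\norm{x_i}\big/\norm{Q[b_2,\dots,b_n]}^{1/2}$. Equating the two expressions and clearing denominators gives $\prod_{i=0}^{n}\norm{x_i}=\norm{Q[b_2,\dots,b_n]}^{1/2}$. The left-hand side is strictly less than $1$ since every iterate lies in the open unit ball, while the right-hand side is at least $1$: by the integrality hypothesis of Theorem~\ref{thm:IwasawaMain}, $Q[b_2,\dots,b_n]$ lies in a discrete ring $R$ with unity, and a nonzero element of such a ring cannot have norm $<1$ (its powers would accumulate at $0$). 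This contradiction closes the induction; specializing to the string $a_1,\dots,a_n$ and $x=x_0$ yields the lemma.

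The step I expect to be the main obstacle is the middle one: making precise that $Q_n=0$, through the null-vector relation, genuinely degenerates the entire fraction to the point $\infty$ and therefore pins the predecessor fraction down to be exactly the group inverse of $b_1$. In the commutative case this fell out of $\det M_n=(-1)^n$, but here the Minkowski null condition must do that bookkeeping; one also has to be careful that the Koranyi-distance computation invokes only left-invariance and Lemma~\ref{lemma:inverseFormulaSiegel}, since the gauge does not admit the naive conjugation identities available over $\R$ and $\C$.
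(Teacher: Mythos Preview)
Your proposal is correct and follows essentially the same approach as the paper: the induction setup from Lemma~\ref{lemma:Qnonzero}, the null-vector argument that $Q_n=0$ forces $R_n=0$ and hence the predecessor fraction equals the group inverse $(-\alpha_1,\overline{\beta_1})$ of $b_1$, the Koranyi-distance computation yielding $\norm{x}^{-1}$, and the integrality argument that a nonzero $Q$ in the discrete ring has norm at least $1$. The only cosmetic difference is that you spell out the final equated identity $\prod_{i=0}^{n}\norm{x_i}=\norm{Q[b_2,\dots,b_n]}^{1/2}$ explicitly, whereas the paper simply defers to ``the rest of the proof is as in Lemma~\ref{lemma:Qnonzero}.''
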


\section{Proof of Lemma \ref{thm:non-convergence}}
\label{sec:non-convergence}

We now prove Lemma \ref{thm:non-convergence} for $X=\R^n$, where CFs are defined either using a real division algebra (in dimensions 1,2,4,8) or using the Iwasawa CF formalism, see Sections \ref{sec:generalsetting} and \ref{sec:Iwasawa}. As before, we will work with digits $\gamma\in \Isom(X)$, generalizing the mappings $x\mapsto x+a$ for $a\in \R^n$.

Fix an Iwasawa inversion space $X=\R^n$, a discrete group $\Zee\subset \Isom(X)$, and an inversion $\iota$. Fix a sequence $\sigma$ in $\Zee$, and use it to define, for $i\geq 0$, the corresponding mappings $T^ix=\gamma_i^{-1} \iota \cdots \gamma_1^{-1} \iota x$ and $T^{-i}x=\iota \gamma_1 \cdots \iota \gamma_i x$. Note that $T^{i}$ and $T^{-i}$ are inverses of each other, but otherwise the exponents are not additive under composition. Recall that for Lemma \ref{thm:non-convergence} we want to show that $|T^i x|$ does not converge to $1$.

\subsection{Convergence, orbits, and traps} We start with a discussion of convergence.

Fix a subset $A$ of the unit sphere $\Sph$. Given $r>0$, denote by $N_r(A)$ the $r$-neighborhood of $A$, i.e., the set of points $x$ such that $d(x,a)<r$ for some $a\in A$.

\begin{defi}Assume that $x\in X$ satisfies, for all $i\geq 0$, $\norm{T^ix}<1$. We say:
\begin{enumerate}
    \item $x$ \emph{$\epsilon$-orbits $A$} if, for all $i\geq 0$, $T^ix\in N_\epsilon(A)$.
    \item $x$ \emph{eventually $\epsilon$-orbits $A$}, if for sufficiently large $i$, $T^ix\in N_\epsilon(A)$.
    \item $x$ \emph{converges to $A$} if, for each $\epsilon>0$, $x$ eventually $\epsilon$-orbits $A$.
\end{enumerate}
\end{defi}

\begin{defi}[Voronoi neighborhood, see Figure \ref{fig:voronoi}]
Given $a\in A$ and $r\geq 0$, define the \emph{Voronoi neighborhood} $V_r(a, A)$ to be the set of points $x\in X$ such that $d(a,x)<r$ and furthermore $a$ realizes the distance from $x$ to $A$. Equivalently, $V_r(a,A)=N_r(A)\cap V(a,A)=N_r(a)\cap V(a,A)$, where
$$V(a,A)=\{x \in X \st \text{ for all b }\in A \setminus \{a\} \text{ one has } d(x,a) \leq d(x,b)\}$$ 
is the closed Voronoi cell around $a$. .
\end{defi}

\begin{figure}
    \centering
    \includegraphics[width=.25\textwidth]{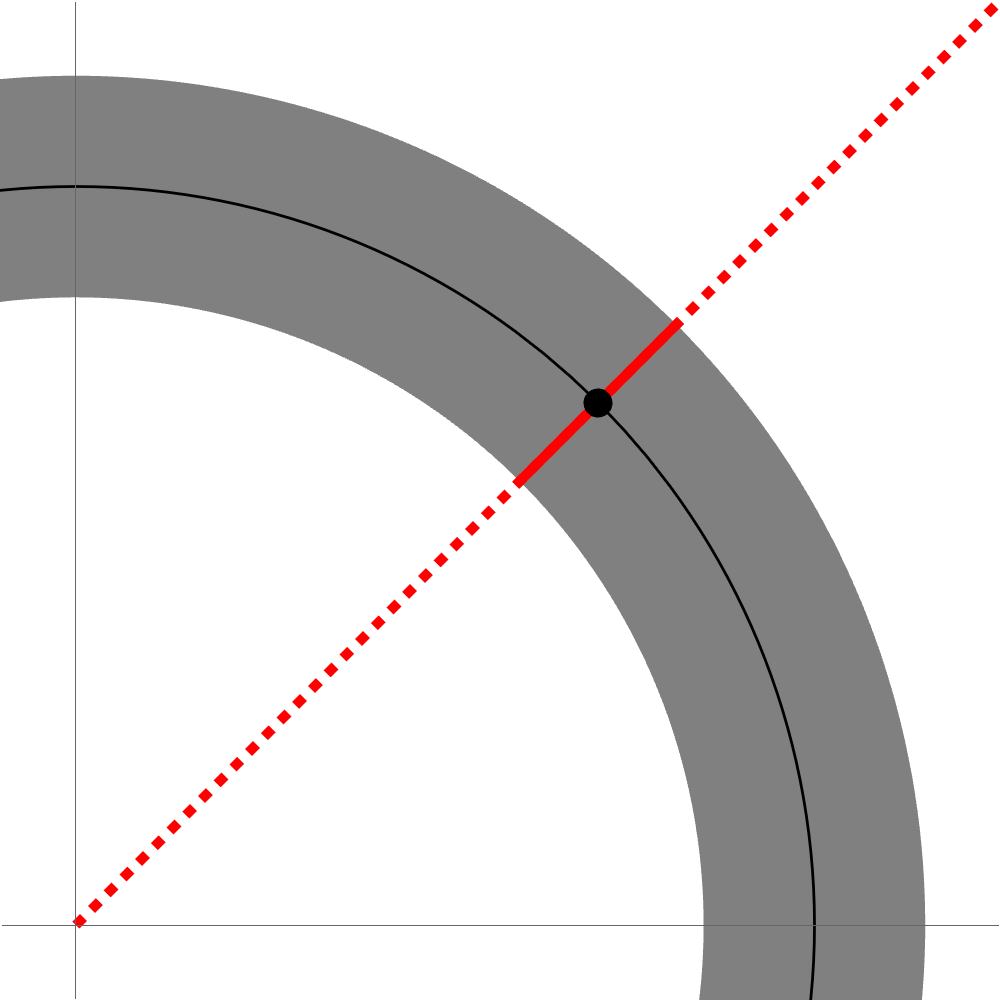}
    \caption{
    A depiction of a Voronoi neighborhood. The black line is $A$, the gray region is $N_r(A)$, the dotted red line is $V(a,A)$, and the solid red line is $V_r(a,A)$.}
    \label{fig:voronoi}
\end{figure}

\begin{defi}
Given $a\in A$ and $\epsilon>0$, we say that the point $a$ is an \emph{$\epsilon$-trap} (relative to $A$) if:
\begin{enumerate}
    \item for all $i>0$ one has $T^i a\in A$, and
    \item for any point $x$ which $\epsilon$-orbits $A$ and satisfies $x\in V_\epsilon(a, A)$, one has, for all $i\geq 0$, that $T^ix\in V_\epsilon(T^ia, A)$. 
\end{enumerate}  
We say that a point $x$ satisfying the conditions above is \emph{$\epsilon$-trapped} by $a$.
\end{defi}

\begin{lemma}
\label{lemma:trapOrbits}
If $x$ is $\epsilon$-trapped by a point $a\in A$, then its orbit $\{T^i x\}$ does not converge to $A$.
\begin{proof}
For all $i$, we have that the minimal distance from $T^i x$ to $A$ is given by $d(T^ix, T^ia)$. By the inversion formula and the fact that each $\gamma_i$ is an isometry, we have that
\(d(T^ix, A)=d(T^ix, T^ia) = \Pi_{j=0}^{i-1} \norm{T^j x}^{-1} d(x,a) > d(x,a)=d(x,A).\)
Note that, if the product converges quickly, it remains possible that $x$ is $\epsilon$-trapped.
\end{proof}
\end{lemma}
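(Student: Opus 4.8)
\textbf{Proof proposal for Lemma \ref{lemma:trapOrbits}.}

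The plan is to use the contrapositive framing already suggested by the statement: if $x$ is $\epsilon$-trapped by $a \in A$, I want to produce a fixed lower bound on $d(T^i x, A)$ that does not decay to zero, which immediately prevents convergence to $A$. The starting observation is that being $\epsilon$-trapped means, by definition, that for every $i \geq 0$ we have $T^i x \in V_\epsilon(T^i a, A)$; and the defining property of the Voronoi neighborhood $V_\epsilon(T^i a, A)$ is precisely that the point $T^i a \in A$ realizes the distance from $T^i x$ to the set $A$. Hence $d(T^i x, A) = d(T^i x, T^i a)$ for all $i$, reducing the problem to controlling the single-point distance $d(T^i x, T^i a)$.

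Next I would expand $d(T^i x, T^i a)$ using the structure of the maps $T^i = \gamma_i^{-1} \iota \cdots \gamma_1^{-1}\iota$. Each $\gamma_j^{-1}$ is an isometry, so it contributes nothing to the distance; each occurrence of $\iota$ contributes a factor governed by the inversion identity $d(\iota u, \iota v) \Norm{u}\Norm{v} = d(u,v)$, equivalently $d(\iota u, \iota v) = d(u,v)\Norm{u}^{-1}\Norm{v}^{-1}$, from Lemma \ref{lemma:inverseFormulaSiegel} (and its division-algebra analog \eqref{eq:inversionidentity}). Tracking the intermediate iterates $T^j x$ and $T^j a$ as the arguments of the successive inversions, and using that $a \in A \subset \Sph$ together with the trap condition $T^j a \in A \subset \Sph$ — so that $\Norm{T^j a} = 1$ for all $j$ and these factors drop out — a telescoping computation yields
\[
d(T^i x, A) = d(T^i x, T^i a) = d(x,a) \prod_{j=0}^{i-1} \Norm{T^j x}^{-1}.
\]
Since $\Norm{T^j x} < 1$ for every $j$ by hypothesis, each factor $\Norm{T^j x}^{-1}$ exceeds $1$, so the product is at least $1$ and therefore $d(T^i x, A) \geq d(x,a) = d(x, A) > 0$ for all $i$. (Here $d(x,A) = d(x,a)$ also because $x \in V_\epsilon(a, A)$ makes $a$ realize the distance from $x$ to $A$.) This uniform lower bound contradicts $T^i x \to A$, since convergence to $A$ would force $d(T^i x, A) \to 0$.

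The only subtle point — and the step I would take most care with — is the bookkeeping in the telescoping product: one must verify that the arguments feeding each inversion are exactly the pairs $(T^j x, T^j a)$ in the right order, and in the Iwasawa setting that the relevant gauge is $\Norm{\cdot}$ rather than a square, so that no stray exponents appear. There is no genuine obstacle here, just the need to align the recursive definitions of $T^i$ with the inversion identity; the case $i=0$ (empty product) gives $d(x,A) = d(x,a)$ trivially, and the inductive step multiplies by one factor $\Norm{T^{i-1}x}^{-1}$. The remark in the statement that the product may converge quickly is exactly the warning that this lower bound is $d(x,a)$, not something growing, so trapping is compatible with the iterates nonetheless staying a bounded-but-positive distance from $A$ — which is all that is needed.
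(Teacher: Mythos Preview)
Your proof is correct and follows essentially the same approach as the paper: both identify $d(T^i x, A) = d(T^i x, T^i a)$ via the Voronoi property of the trap, then telescope using the inversion identity and the isometry of the $\gamma_j$'s (with $\Norm{T^j a}=1$ since $T^j a \in A \subset \Sph$) to obtain $d(T^i x, A) = d(x,a)\prod_{j=0}^{i-1}\Norm{T^j x}^{-1} > d(x,a)$. Your write-up is in fact a bit more explicit about why the $\Norm{T^j a}$ factors disappear, which is a helpful clarification.
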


\comment{
\subsection{First reduction}
 
Let $S_1 = \Sph \cap (\cup_{\gamma \neq \id} \iota \gamma \Sph)$. Since $\Zee$ is discrete, it acts properly, so $S_1$ is a union of finitely many intersections, each of which is then a compact analytic set with codimension at least 2.

\begin{lemma}
\label{lemma:intersections}
There exists a function $f$ such that for sufficiently small $\epsilon$, if $x\in N_\epsilon(\Sph) \cap N_\epsilon(\iota \gamma \Sph)$, then $x\in N_{\tau(\epsilon)}(\Sph\cap \iota \gamma \Sph)$. Furthermore $\lim_{\epsilon\rightarrow 0} \tau(\epsilon)=0$.
\begin{proof}
The result follows from the compactness of $\Sph$ using a finite-subcover argument, where the covers are given by a neighborhood of $S_1$ and complements of $1/i$-neighborhoods of $\cup_{\gamma \neq \id} \iota \gamma \Sph$. FILL IN FURTHER. Might also need the relevant part of the union to be compact.
\end{proof}
\end{lemma}

\begin{lemma}
\label{lemma:convergencebasecase}
If $x$ converges to $\Sph$, then it converges to $S_1$.
\begin{proof}
Fix $\epsilon>0$ such that $\frac{\epsilon}{1-\epsilon}$ is smaller than the bound given by Lemma \ref{lemma:intersections}.
Suppose without loss of generality that $x$ $\epsilon$-orbits $\Sph$ (if necessary, replace $x$ with $T^i x$ and update the sequence $\sigma$ accordingly).

Then we must have that $d(Tx, \Sph)<\epsilon$, so by the inversion formula $d(x,\iota \gamma_1 \Sph)<\frac{\epsilon}{1-\epsilon}$. By Lemma \ref{lemma:intersections}, this implies that $d(x, S_1)<f(\frac{\epsilon}{1-\epsilon})$, which goes to $0$ with $\epsilon$, as desired.
\end{proof}
\end{lemma}
}

\subsection{Defining the sets $S_j$}

Starting with $S_0=\Sph$, we will now define a nested sequence of subsets $S_j$ of $\Sph$. We could quickly define $S_1$ with the desired properties as $\bigcup_{\gamma\in \Zee\setminus\{0\}} (S_0\cap \gamma S_0)$. However, we could not continue this definition to $S_2 = \bigcup_{\gamma\in \Zee\setminus\{0\}} (S_1\cap \gamma S_1)$, as this would just give us $S_1$ again. Instead we must analyze the individual pieces of $S_1$, namely the sets $S_0\cap \gamma S_0$, and see where translates of these intersect nontrivially with other pieces of $S_1$. Each of these intersections becomes one of the pieces of $S_2$. Nontrivial intersections of shifted pieces of $S_2$ become the pieces of $S_3$ and so on. The full definition is given in Definition \ref{defi:decomposition} below, and examples are given in Figure \ref{fig:ornament}.

The various pieces of $S_j$ will be indexed using superscripts: $S_j^1, S_j^2,S_j^3,\dots$.

\begin{remark}The sets $S_j$ appear to have the property that, for a point $s\in S_j$, we will have at least $j$ choices of a digit $a$ such that $T_a(s)\in S_j$. A variant of this property would then hold for points $x$ near $S_j$. However, we do not prove or use this property.
\end{remark}

\begin{defi}
\label{defi:decomposition}
We will define several sequences: 
\begin{itemize}
    \item $S_j$, each of which will be a subset of $\Sph$.
    \item $\operatorname{Pieces}_j$, each of which will be a set whose elements are subsets of $S_j$ and whose union is all of $S_j$. The elements of $\operatorname{Pieces}_j$ will be denoted by $S_j^1,S_j^2,S_j^3,\dots$.
    \item $K_j$, which are integers satisfying $K_j=|\operatorname{Pieces}_j|$.
    \item $\operatorname{Intersectors}_j$, each of which will be a set whose elements are subsets of $X$.
\end{itemize}

Let $S_0=\Sph$ and $\operatorname{Pieces}_0=\{\Sph\}$ and $K_0=|\operatorname{Pieces}_0|$. Then for each $j\geq 0$, we proceed to define the sequences iteratively.
\\For $1\leq k\leq K_j$,  the non-trivial intersect\underline{ors} of $S_j^k$: 
\(\operatorname{Intersectors}_{j+1}^k = \{\gamma S_j^{k'}: 1\leq k'\leq K_j, \gamma \in \Zee \text{, and }S_j^k \cap \gamma S_j^{k'} \notin \{\emptyset, S_j^k\} \},\)
For $1\leq k\leq K_j$,  the resulting non-trivial intersect\underline{ions} with $S_j^k$:
\(\operatorname{Pieces}_{j+1}^k = \{S_j^k \cap \gamma S_j^{k'}: \gamma S_j^{k'}\in \operatorname{Intersectors}_{j+1}^k\}
\)
All of the non-trivial intersectors:
\(\operatorname{Intersectors}_{j+1} =\bigcup_{k=1}^{K_j} \operatorname{Intersectors}_{j+1}^k,\)
All of the pieces (setting $K_{j+1}$ to be the number of elements in $\operatorname{Pieces}_{j+1}$):
\(\operatorname{Pieces}_{j+1} = \bigcup_{k=1}^{K_j}  \operatorname{Pieces}_{j+1}^k = \{S_{j+1}^{k'}\}_{k'=1}^{K_{j+1}},\)
And, finally, the set $S_{j+1}$:
\(S_{j+1}=\bigcup \operatorname{Pieces}_{j+1} = \bigcup_{k=1}^{K_{j+1}}S_{j+1}^{k}.\)
\end{defi}

We will require some straightforward lemmas:
\begin{lemma}
\label{lemma:spheres}
Let $A$ and $B$ be spheres of dimension $d_1$ and $d_2$, respectively, with $d_1\leq d_2$. Then $A\cap B$ is one of the following: the empty set, a single point, $A$, or a sphere of dimension at most $d_2-1$. Furthermore, there is a group of rotations that acts transitively on $A\cap B$ and leaves both $A$ and $B$ invariant.
\begin{proof}
Recall that a sphere of dimension $d$ is the intersection of the full sphere $S(x,r)$ with a hyperplane $H$ of dimension $d+1$ passing through $x$. Write $A=S(x_a, r_a)\cap H_a$ and $B=S(x_b, r_b)\cap H_b$. Then $A\cap B = S(x_a, r_a)\cap S(x_b, r_b) \cap H_a \cap H_b$. The result follows from this description and the standard fact that $S(x_a, r_a)\cap S(x_b, r_b)$ is a sphere of codimension 1.
\end{proof}
\end{lemma}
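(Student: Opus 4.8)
The plan is to reduce the intersection of two spheres to the intersection of a \emph{single} sphere with an affine subspace, where the statement is classical, and then to read the symmetry group off of that description.

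First I would record the elementary fact underlying everything: for a full sphere $S(x,r)\subseteq\R^N$ and an affine subspace $M\subseteq\R^N$, the set $S(x,r)\cap M$ is empty, a single point, or a sphere whose center is the orthogonal projection $z$ of $x$ onto $M$ and whose affine span is all of $M$. This is immediate from the Pythagorean identity $|y-x|^2=|y-z|^2+d(x,M)^2$ for $y\in M$: the condition $|y-x|=r$ becomes $|y-z|^2=r^2-d(x,M)^2$, which is empty, the single point $z$, or a sphere of radius $\sqrt{r^2-d(x,M)^2}$ in $M$ centered at $z$, according as $d(x,M)$ is larger than, equal to, or smaller than $r$. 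Next, write $A=S(x_a,r_a)\cap H_a$ and $B=S(x_b,r_b)\cap H_b$; here $H_a=\operatorname{aff}(A)$ and $H_b=\operatorname{aff}(B)$ are forced, since a $d$-sphere affinely spans its $(d+1)$-dimensional carrier, and then $x_a,x_b$ are the respective circumcenters, so this presentation is canonical. Now $A\cap B=S(x_a,r_a)\cap S(x_b,r_b)\cap H_a\cap H_b$. If $x_a=x_b$ the two full spheres are equal or disjoint; if $x_a\ne x_b$, subtracting their defining quadratics shows $S(x_a,r_a)\cap S(x_b,r_b)=S(x_a,r_a)\cap P$ for the radical hyperplane $P=\{y:2\langle y,\,x_b-x_a\rangle=(r_a^2-|x_a|^2)-(r_b^2-|x_b|^2)\}$. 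Either way $A\cap B=S(x_a,r_a)\cap M$ for a single affine subspace $M$ (namely $H_a\cap H_b$, or $P\cap H_a\cap H_b$), so the elementary fact gives that $A\cap B$ is empty, a point, or a sphere.

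For the dimension bound I would argue: $A\cap B\subseteq B$, and if $A\cap B$ is a sphere that is a \emph{proper} subset of $B$ then its affine span is a proper subset of $\operatorname{aff}(B)$ — otherwise $A\cap B$ and $B$ would be two full spheres of a common affine subspace with one contained in the other, hence equal, using uniqueness of the circumcenter of a maximal affinely independent set of points lying on both — so $\dim(A\cap B)\le d_2-1$; whereas if $A\cap B=B$, then $B\subseteq A$ forces $d_1=d_2$ and (by the same reasoning) $A=B$, so $A\cap B=A$. Together with the trivial cases this yields exactly the four listed alternatives.

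For the transitivity statement, the trivial group handles the empty-set and single-point cases, so suppose $A\cap B=\Sigma$ is a sphere with center $z$, carrier $L=\operatorname{aff}(\Sigma)$, and $L_0$ the linear subspace parallel to $L$. Since $M$ in the presentation $\Sigma=S(x_a,r_a)\cap M=S(x_b,r_b)\cap M$ is symmetric in $a$ and $b$, the elementary fact shows $M=L$ and that $z$ is the orthogonal projection of \emph{both} $x_a$ and $x_b$ onto $L$; hence $x_a-z,\ x_b-z\in L_0^{\perp}$. Also $H_a\supseteq A\supseteq\Sigma$ gives $H_a\supseteq L$, and writing the linear part of $H_a$ as $L_0\oplus U_a$ with $U_a:=(H_a)_0\cap L_0^{\perp}$, the relations $x_a\in H_a$ and $x_a-z\in L_0^{\perp}$ force $x_a\in z+U_a$; similarly for $b$. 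Now let $G$ be the group of isometries of $\R^N$ fixing $z$ and restricting to the identity on the affine subspace $z+L_0^{\perp}$. Then $G\cong O(L_0)$ acts on $\Sigma=S(z,\rho)\cap L$ as the full rotation group of a sphere about its center, hence transitively — a reflection being used when $\Sigma$ is a two-point set — and each $g\in G$ fixes $x_a$, preserves $L_0$, and is the identity on $U_a\subseteq L_0^{\perp}$, hence fixes $H_a=z+(L_0\oplus U_a)$ and therefore fixes $A=S(x_a,r_a)\cap H_a$; the same applies to $B$. Thus $G$ is the required group. The one genuinely delicate point — the main obstacle — is this last step: verifying that the rotational symmetries of $\Sigma$ really do extend to symmetries of both $A$ and $B$, which hinges on the observation that $\Sigma$ has the \emph{same} center and carrier whether it is viewed inside $A$ or inside $B$, together with $L\subseteq H_a\cap H_b$ and $x_a-z,\ x_b-z\perp L_0$.
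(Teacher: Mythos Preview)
Your proof is correct and follows the same decomposition as the paper: write each sphere as the intersection of a full sphere $S(x,r)$ with an affine subspace through its center, so that $A\cap B=S(x_a,r_a)\cap S(x_b,r_b)\cap H_a\cap H_b$, and then analyze this as a sphere cut by an affine subspace. Your argument is considerably more detailed than the paper's two-line sketch---in particular, the paper does not work out the transitivity claim at all, whereas you carefully identify the common center $z$ of $\Sigma$ and exhibit the group $G\cong O(L_0)$ preserving both $A$ and $B$---but the underlying approach is identical.
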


\begin{lemma}
\label{lemma:decompositionproperties}
For each $j$ we have:
\begin{enumerate}
    \item The set $S_j$ is a finite union of sets $\{S_j^k\}_{k=1}^{K_j}$, called \emph{pieces} of $S_j$,
    \item Each piece $S^k_j$ is isometric to either a sphere of dimension at most $n-j-1$ or is a single point,
    \item The set of pieces is $\iota$-symmetric: for each $k$ there exists $k'$ (possibly equal to $k$) such that $\iota S_j^k = S_j^{k'}$.
\end{enumerate}
Note that we do \emph{not} assume that the pieces are disjoint, and they may even be nested in each other or double-counted.
\begin{proof}For $j=0$, the statements are immediate, so let us assume $j>0$ and the lemma holds for $j-1$.

Finiteness of $\operatorname{Pieces}_j$ follows from the fact that each set $\operatorname{Pieces}_j^k$ is finite, which in turn follows from the fact that $\Zee$ is a discrete group of isometries and therefore acts properly.

The second claim follows immediately from Lemma \ref{lemma:spheres}.

The third claim uses the symmetry assumption for the lattice $\Zee$.
By construction, the given $S_j^k$ can be written as $\gamma S_{j-1}^{k_1}\cap S_{j-1}^{k_2}$, so that 
\(\iota S_j^k = \iota (\gamma S_{j-1}^{k_1}\cap S_{j-1}^{k_2}) = \iota \gamma S_{j-1}^{k_1}\cap \iota S_{j-1}^{k_2}.\)
Noting that we are working on the sphere and using the symmetry assumption, we may write $\iota \gamma = \gamma' \iota$ for some $\gamma'\in \Zee$, so that
$\iota S_j^k = \gamma'\iota  S_{j-1}^{k_1}\cap \iota S_{j-1}^{k_2}$. The result then follows from the inductive assumption that the sets $\iota  S_{j-1}^{k_1}$ and $\iota S_{j-1}^{k_2}$ are in $\operatorname{Pieces}_{j-1}$.
\end{proof}
\end{lemma}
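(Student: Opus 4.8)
The plan is to prove the three claims simultaneously by induction on $j$, reading off from Definition~\ref{defi:decomposition} the one structural fact we need: every element of $\operatorname{Pieces}_j$ has the form $\gamma S_{j-1}^{k_1}\cap S_{j-1}^{k_2}$ with $\gamma\in\Zee$ and $S_{j-1}^{k_1},S_{j-1}^{k_2}$ pieces of $S_{j-1}$, and, by construction, this set is neither $\emptyset$ nor all of $S_{j-1}^{k_2}$. The base case $j=0$ is immediate: $S_0=\Sph$ has the single piece $\Sph$, which is a sphere of dimension $n-1=n-0-1$ and satisfies $\iota\Sph=\Sph$ since $\iota$ maps $\Sph$ bijectively onto itself.

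In the inductive step, claim~(1) follows from properness of the action: each $S_{j-1}^{k_2}$ lies in the bounded set $\Sph$, so only finitely many translates $\gamma S_{j-1}^{k_1}$ of another (also bounded) piece can meet it; together with $K_{j-1}<\infty$ this makes $\operatorname{Intersectors}_j$ and $\operatorname{Pieces}_j$ finite. Claim~(2) is where Lemma~\ref{lemma:spheres} is used: by induction and the fact that $\gamma$ is an isometry of $\X^n_\R$, both $\gamma S_{j-1}^{k_1}$ and $S_{j-1}^{k_2}$ are isometric to spheres of dimension $\le n-j$ (or single points), and Lemma~\ref{lemma:spheres} says their intersection is empty, a point, equal to one of the two, or a sphere of dimension strictly below the larger of the two dimensions. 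Non-triviality of the piece removes the empty case and the ``$=S_{j-1}^{k_2}$'' case, and the only other way the piece could equal one of the two spheres is to equal $\gamma S_{j-1}^{k_1}$ with $\gamma S_{j-1}^{k_1}\subsetneq S_{j-1}^{k_2}$ (proper, since equality would again be excluded), which forces $\dim\gamma S_{j-1}^{k_1}<\dim S_{j-1}^{k_2}\le n-j$; so in every surviving case the piece is a point or a sphere of dimension $\le n-j-1$.

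For claim~(3), since all pieces lie on $\Sph$ and $\iota|_\Sph$ is a bijection, I would write $\iota(\gamma S_{j-1}^{k_1}\cap S_{j-1}^{k_2})=(\iota\gamma S_{j-1}^{k_1})\cap(\iota S_{j-1}^{k_2})$ and then invoke the sphere-normalizing hypothesis to replace $\iota\gamma$ by $\gamma'\iota$ on $\Sph$ for a suitable $\gamma'\in\Zee$, obtaining $\gamma'(\iota S_{j-1}^{k_1})\cap(\iota S_{j-1}^{k_2})$. By the inductive form of claim~(3), $\iota S_{j-1}^{k_1}$ and $\iota S_{j-1}^{k_2}$ are again pieces of $S_{j-1}$, and since $\iota$ is injective this intersection is neither $\emptyset$ nor all of $\iota S_{j-1}^{k_2}$; hence $\iota S_j^k\in\operatorname{Pieces}_j$.

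The only steps that need genuine care are the bookkeeping in claim~(2) — making sure the ``intersection equals one of the two spheres'' alternative of Lemma~\ref{lemma:spheres} is either forbidden by non-triviality or still yields the dimension drop — and, in claim~(3), the commutation of $\iota$ past a lattice element on $\Sph$ (which is precisely what the sphere-normalizing condition supplies), together with checking that applying $\iota$ preserves non-triviality of the defining intersection. Everything else is a routine unwinding of Definition~\ref{defi:decomposition} and the discreteness of $\Zee$.
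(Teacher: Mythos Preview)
Your proof is correct and follows essentially the same approach as the paper: induction on $j$, with finiteness from properness of the $\Zee$-action, the dimension drop from Lemma~\ref{lemma:spheres}, and $\iota$-symmetry from the sphere-normalizing condition combined with the inductive hypothesis. You are in fact slightly more careful than the paper in two places---handling the ``intersection equals the smaller sphere'' alternative of Lemma~\ref{lemma:spheres} in claim~(2), and checking that non-triviality of the defining intersection is preserved under $\iota$ in claim~(3)---but these are refinements of the same argument rather than a different route.
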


\begin{lemma}[Proximity Lemma]
\label{lemma:epsilonrestriction}
There exists $\epsilon_0>0$ and a function $\tau(\epsilon)$ satisfying $\lim_{\epsilon\rightarrow 0}\tau(\epsilon)=0$ such that for any $\epsilon<\epsilon_0$ the following statements are true:\\
\begin{enumerate}
    \item Let $A$ and $B$ be two sets that appear as either intersectors or pieces in Definition \ref{defi:decomposition}. Then,
\(N_\epsilon(A)\cap N_\epsilon(B) \subset N_{\tau(\epsilon)}(A\cap B).\)
    \item If $j,j'\geq 0$ and $\gamma\in\Zee$ and $A$ has the form $\gamma S_j$ or $\iota \gamma S_j$, then
\(N_\epsilon(A)\cap N_\epsilon(S_{j'}) \subset N_{\tau(\epsilon)}(A\cap S_{j'}).\)
    \item Let $A$ be a piece and $d(x,A)<\epsilon$. Then $d(x,A)$ is realized by a unique point of $A$.
\end{enumerate}

\begin{proof}
Observe first that we are really interested in finitely many objects if $\epsilon<1/2$. For (1) and (3): by construction, there are finitely many pieces and intersectors. For (2):  for $\epsilon<1/2$ and sufficiently large $\gamma$ the intersection $N_\epsilon(\gamma S_j)\cap N_\epsilon (S_{j'})$ is empty because $\gamma S_j$ is close to infinity; and likewise $N_\epsilon(\iota \gamma S_j)\cap N_\epsilon (S_{j'})$ is empty because $\iota \gamma S_j$ is close to $0$. 

We may therefore provide a separate $\epsilon_0$ for each instance of the claims, and then take the minimum.

For (1 and 2), let $A$, $B$ each be a sphere or a point that appears in the claim. If $A \cap B$ is disjoint, we require $\epsilon_0$ sufficiently small that $N_{\epsilon_0}(A)\cap N_{\epsilon_0}(B)=\emptyset$. Otherwise, set $\tau(\epsilon)=\inf\{\delta: N_\epsilon(A)\cap N_\epsilon(B)\subset  N_\delta(A\cap B)\}$. It follows immediately that  $\tau(\epsilon)\geq \epsilon$ and that $\tau(\epsilon)$ is non-increasing. We have several cases for the upper bound: If either $A$ or $B$ is contained in the other, we have $\tau(\epsilon)=\epsilon$. If $A$ and $B$ are transversal, by linearizing at any intersection point (by Lemma \ref{lemma:spheres}, the choice of point does not matter) we obtain a constant $C$ such that $\tau(\epsilon)\leq C\epsilon$ for sufficiently small $\epsilon$. Lastly, if $A$ and $B$ are tangent and non-equal, then they must meet at a single point but with different curvature, so we obtain that $\lim_{\epsilon \rightarrow 0} \tau(\epsilon)=0$ without a linear bound.

For (3): if the piece is a sphere, then the claim is immediate if $\epsilon_0$ is smaller than the radius of the sphere; if the piece is a point, then the claim is trivial.
\end{proof}
\end{lemma}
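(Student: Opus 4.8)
The plan is to reduce all three assertions to finitely many statements about a single pair of round Euclidean spheres (or points) in $\R^n$ and then to analyze such a pair by an explicit local argument; recall that in this section $X=\R^n$ carries the usual metric, so by Lemma \ref{lemma:decompositionproperties} every piece is a round sphere or a point, every intersector is an isometric image of a piece (hence again a round sphere or a point), and each set $\gamma S_j$, $\iota\gamma S_j$ appearing in statement~(2) is a finite union of round spheres and points.

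\textbf{Step 1 (finiteness).} For each fixed $j$ there are finitely many pieces, and since $\Zee$ acts properly there are finitely many intersectors; hence statement~(1) names only finitely many ordered pairs $(A,B)$. For statement~(2) the set of $\gamma$ is a priori infinite, but for $\epsilon<1/2$ all but finitely many $\gamma\in\Zee$ move $\Sph$ --- hence $\gamma S_j$ --- so far from $0$ that $N_\epsilon(\gamma S_j)\cap N_\epsilon(S_{j'})=\emptyset$, and for such $\gamma$ the set $\iota\gamma S_j$ is a small sphere near $0$, again disjoint from $N_\epsilon(S_{j'})$; the finitely many remaining instances split, via $\gamma S_j=\bigcup_k\gamma S_j^k$ and $S_{j'}=\bigcup_{k'}S_{j'}^{k'}$, into finitely many sphere/point pairs exactly as in~(1). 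So (1), (2), (3) reduce to finitely many instances, and it suffices to supply an $\epsilon_0$ and a $\tau$ for each and then take the minimum of the $\epsilon_0$'s and the pointwise supremum of the $\tau$'s.

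\textbf{Step 2 (one pair of spheres).} Fix round spheres or points $A,B$. If $A\cap B=\emptyset$, compactness gives $\epsilon_0$ with $N_{\epsilon_0}(A)\cap N_{\epsilon_0}(B)=\emptyset$ and the inclusion is vacuous; otherwise put $\tau(\epsilon)=\inf\{\delta>0:N_\epsilon(A)\cap N_\epsilon(B)\subset N_\delta(A\cap B)\}$, which is finite and $\ge\epsilon$ for small $\epsilon$, so what remains is $\tau(\epsilon)\to0$. By Lemma \ref{lemma:spheres}, $A\cap B$ is empty, a point, all of the smaller sphere, or a lower-dimensional sphere, and a rotation group preserving both $A$ and $B$ acts transitively on it; so the local picture is the same at every point of $A\cap B$. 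If one of $A,B$ contains the other, $A\cap B$ is the smaller set and $\tau(\epsilon)=\epsilon$. If $A$ and $B$ meet transversally, linearizing at one intersection point produces a constant $C$ with $\tau(\epsilon)\le C\epsilon$ for $\epsilon$ small. If $A$ and $B$ are tangent and unequal, then $A\cap B$ is forced to be a single point $p$ --- a positive-dimensional common piece would make $A$ and $B$ agree there --- and at $p$ the two spheres have distinct curvatures, so comparing the boundary layers of two tangent round spheres of different radii shows $N_\epsilon(A)\cap N_\epsilon(B)$ contracts to $\{p\}$, whence $\tau(\epsilon)\to0$, now with no linear rate.

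\textbf{Step 3 (statement (3)) and the main difficulty.} If the piece $A$ is a point, uniqueness is trivial. If $A$ is a round sphere of radius $\rho$ lying in an affine subspace $P$ centered at $c$, decompose $x=x_P+x_\perp$ with $x_\perp\perp P$: then $d(x,s)^2=|x_P-s|^2+|x_\perp|^2$ for $s\in A$, so $x_P=c$ would force $d(x,A)=\sqrt{\rho^2+|x_\perp|^2}\ge\rho$. Taking $\epsilon_0\le\rho$, the hypothesis $d(x,A)<\epsilon_0$ gives $x_P\ne c$, and then the nearest point $s=c+\rho(x_P-c)/|x_P-c|$ is unique. The delicate part of the whole argument is the tangency case of Step~2 --- two round spheres tangent at a point but of different curvature, where one must show that the overlap of their $\epsilon$-neighborhoods collapses to the tangency point despite the absence of a linear bound --- with a secondary point being the truncation in Step~1, namely making precise that all but finitely many $\gamma\in\Zee$ push $S_j\subset\Sph$ entirely out of $N_\epsilon(S_{j'})$.
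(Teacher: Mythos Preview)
Your proposal is correct and follows essentially the same approach as the paper: reduce to finitely many sphere/point pairs (using properness of $\Zee$ for statement~(2)), then handle each pair by the disjoint/contained/transversal/tangent case split, and for~(3) take $\epsilon_0$ below the radius of each spherical piece. Your Step~3 spells out the nearest-point computation a bit more explicitly than the paper does, but the structure and all key ideas are the same.
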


\subsection{Autonomous Inhabited Traps}

\begin{defi}
A point $s\in S_j$ is:
\begin{enumerate}
    \item $\epsilon$-\emph{inhabited} if $V_\epsilon(s,S_j)$ contains a point that $\epsilon$-orbits $S_j$
    \item $\epsilon$-$\emph{autonomous}$ if it is $\epsilon$-inhabited and for all $i\geq 0$ one has $d(T^i s, S_{j+1})\geq \tau(3 \epsilon)$ for $\tau$ defined in Lemma \ref{lemma:epsilonrestriction}.
\end{enumerate}
\end{defi}
\begin{lemma}
\label{lemma:trap} Suppose $\epsilon<\min\{1/2, \epsilon_0/3\}$, for $\epsilon_0$ given by Lemma \ref{lemma:epsilonrestriction}.
If $s\in S_j$ is $\epsilon$-autonomous, then $s$ is an $\epsilon$-trap relative to $S_j$.
\begin{proof}
We need to show that, for all $i\geq 0$,  that $T^is\in S_j$ and that any point $x$ in $V_\epsilon(s, S_j)$ that $\epsilon$-orbits $S_j$ satisfies $T^ix\in V_\epsilon(T^is, S_j)$. It suffices to do this for $i=1$, i.e., to show that $Ts\in S_j$ and $Tx\in V_\epsilon(Ts,S_j)$.

Fix $k$ such that $s\in S_j^k$. Let $x$ be an element of $V_\epsilon(s,S_j)$ that $\epsilon$-orbits $S_j$. Then $d(Tx,S_j)<\epsilon$ and therefore for some $k'$ we have that $d(Tx, S_j^{k'})<\epsilon$, and in particular $Tx \in V_\epsilon(s', S_j)$ for some $s'\in S_j^{k'}$. We will show that $s'=Ts$.

We compute:
\begin{align*}
    d(T^{-1}s',x) &= d(\iota \gamma_1 s', \iota \gamma_1 Tx) = \frac{d(\gamma_1 s', \gamma_1 Tx)}{\norm{\gamma_1 s'}\norm{\gamma_1 Tx}} = \frac{d(s', Tx)}{\norm{\gamma_1 s'}\norm{\gamma_1 Tx}} \\
    &< \frac{\epsilon}{\norm{\gamma_1  s'}\norm{\gamma_1  Tx}}
    \leq \frac{\epsilon}{\norm{\gamma_1  s'}\norm{\iota x}}
    \leq \frac{\epsilon}{\norm{\gamma_1  s'}}\\
    &\leq \epsilon/(1-\epsilon) < 2\epsilon,
\end{align*}
where the last line follows from $d(s', Tx)<\epsilon$, which gives  $d(\gamma_1 s', \iota x)<\epsilon$ and thus
\(
\norm{\gamma_1 s'}\geq \norm{\iota x} - \epsilon \geq 1-\epsilon.
\)

We therefore conclude that $d(s, T^{-1}s')<3 \epsilon$.  We thus have that $d(s, T^{-1}S_j^{k'})<3\epsilon$.

Now, $T^{-1}S_j^{k'}=\gamma_1 \iota S_j^{k'}$. By the $\iota$-invariance of the pieces (part 3 of Lemma \ref{lemma:decompositionproperties}), there is a piece $S_j^{k_1}$ such that $S_j^{k_1}=\iota S_j^{k'}$.
We may therefore use the intersection part of Lemma \ref{lemma:epsilonrestriction} (recalling that $d(s,S_j^k)=0$) to conclude that $d(s, S_j^k\cap T^{-1}S_j^{k'})<\tau(3\epsilon)$. In particular, $S_j^k\cap T^{-1}S_j^{k'}$ is non-empty and we either have $S_j^k = T^{-1}S_j^{k'}$ or that $S_j^k\cap T^{-1}S_j^{k'}$ is a piece of $S_{j+1}$. The latter would imply that $d(s, S_{j+1})<\tau(3\epsilon)$, contradicting the assumption that $s$ is $\epsilon$-autonomous. 

We conclude that $T^{-1}S_j^{k'}=S_j^k$ and therefore $T^{-1}s'\in S_j^k\subset  \Sph$. Now that $\norm{T^{-1}s}=1$, we have that distances from $x$ to $s$ and $T^{-1}s'$ are distorted in the same way under $T$, so $d(s,x)<d(T^{-1}s',x)$ if and only if $d(Ts, Tx)<d(s',Tx)$. 

By the uniqueness of distance minimizers (Lemma \ref{lemma:epsilonrestriction}) in $\epsilon_0$-neighborhoods of $\operatorname{Pieces}_j$, we have $s'=Ts$, as desired. In particular:
\begin{enumerate}
    \item $Ts\in S_j$
    \item by $\epsilon$-orbiting, $x\in N_\epsilon(S_j)$
    \item since $s'=Ts$, $Tx\in V_\epsilon(Ts,S_j)$.\qedhere
\end{enumerate} 
\end{proof}
\end{lemma}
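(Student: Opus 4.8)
The plan is to reduce the statement to a single step of the dynamics and then iterate. Concretely, I would prove the one-step claim: \emph{if $s\in S_j$ is $\epsilon$-autonomous with respect to the digit sequence $\sigma=(\gamma_1,\gamma_2,\dots)$, then $Ts\in S_j$, the point $Ts$ is $\epsilon$-autonomous with respect to the shifted sequence $\sigma'=(\gamma_2,\gamma_3,\dots)$, and every $x\in V_\epsilon(s,S_j)$ that $\epsilon$-orbits $S_j$ satisfies $Tx\in V_\epsilon(Ts,S_j)$.} Granting this, the lemma follows by iteration: since the operators $T^i$ are not additive under composition, one must re-read the sequence after each step, but a direct check shows $(T')^i(Tx)=T^{i+1}x$ and $d(T^{i}s,S_{j+1})\ge\tau(3\epsilon)$ along the shifted sequences, so applying the one-step claim to $\sigma,\sigma',\sigma'',\dots$ yields $T^is\in S_j$ for all $i$ (condition (1) of an $\epsilon$-trap) and $T^ix\in V_\epsilon(T^is,S_j)$ for the chosen $x$ (condition (2)). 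Preservation of $\epsilon$-autonomy and of the $\epsilon$-orbiting property under one step is immediate from the definitions.

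For the one-step claim, fix $k$ with $s\in S_j^k$ and let $x\in V_\epsilon(s,S_j)$ $\epsilon$-orbit $S_j$. Then $Tx\in N_\epsilon(S_j)$; pick $s'\in S_j$ realizing $d(Tx,S_j)<\epsilon$, say $s'\in S_j^{k'}$, so $Tx\in V_\epsilon(s',S_j)$ and, by part (3) of the Proximity Lemma (Lemma~\ref{lemma:epsilonrestriction}), $s'$ is the unique nearest point of $S_j^{k'}$ to $Tx$. I would then transport this bound back through $T^{-1}=\iota\gamma_1$: using $\gamma_1 Tx=\iota x$, hence $\norm{\gamma_1 Tx}=\norm{x}^{-1}\ge 1$ (as $\norm{x}<1$ because $x$ orbits the open ball), and $d(\gamma_1 s',\gamma_1 Tx)=d(s',Tx)<\epsilon$, hence $\norm{\gamma_1 s'}\ge\norm{\iota x}-\epsilon\ge 1-\epsilon$, the inversion distance identity of Lemma~\ref{lemma:inverseFormulaSiegel} gives
\(
d(T^{-1}s',x)=\frac{d(s',Tx)}{\norm{\gamma_1 s'}\,\norm{\gamma_1 Tx}}<\frac{\epsilon}{1-\epsilon}<2\epsilon,
\)
and combining with $d(s,x)<\epsilon$ gives $d(s,T^{-1}s')<3\epsilon$.

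The crux — and the step I expect to need the most care — is converting this proximity into the identity $T^{-1}S_j^{k'}=S_j^k$. Write $T^{-1}S_j^{k'}=\gamma_1\iota S_j^{k'}$; by the $\iota$-symmetry of the pieces (part (3) of Lemma~\ref{lemma:decompositionproperties}), $\iota S_j^{k'}$ is again a piece, so $T^{-1}S_j^{k'}$ is a translate of a piece and the intersection part of the Proximity Lemma applies to $S_j^k$ and $T^{-1}S_j^{k'}$. Since $s\in N_{3\epsilon}(S_j^k)\cap N_{3\epsilon}(T^{-1}S_j^{k'})$ and $3\epsilon<\epsilon_0$, we obtain $d\big(s,\,S_j^k\cap T^{-1}S_j^{k'}\big)<\tau(3\epsilon)$; in particular the intersection is nonempty, so by Definition~\ref{defi:decomposition} either $S_j^k=T^{-1}S_j^{k'}$ or $S_j^k\cap T^{-1}S_j^{k'}$ is a piece of $S_{j+1}$. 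The second alternative would force $d(s,S_{j+1})<\tau(3\epsilon)$, contradicting the $\epsilon$-autonomy of $s$; hence $TS_j^k=S_j^{k'}$, so in particular $Ts\in S_j$.

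It remains to see $s'=Ts$. Both $s$ and $T^{-1}s'$ lie in $S_j^k\subset\Sph$ and so have gauge $1$; hence $T$ rescales $d(s,x)$ and $d(T^{-1}s',x)$ by the common factor $\norm{x}^{-1}$, so $d(Ts,Tx)<d(s',Tx)$ if and only if $d(s,x)<d(T^{-1}s',x)$. As $s$ is the \emph{unique} nearest point of $S_j^k$ to $x$, if $T^{-1}s'\neq s$ then $d(T^{-1}s',x)>d(s,x)$, giving $d(Ts,Tx)<d(s',Tx)$ with $Ts\in TS_j^k=S_j^{k'}$, which contradicts that $s'$ realizes the distance from $Tx$ to $S_j^{k'}$. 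Therefore $s'=Ts$ and $Tx\in V_\epsilon(Ts,S_j)$, completing the one-step claim and hence the lemma. Beyond this geometric dichotomy, the only real care needed is in tracking the shifted sequences and confirming that at each stage the hypothesis $d(T^is,S_{j+1})\ge\tau(3\epsilon)$ is exactly what excludes the spurious alternative $S_j^k\cap T^{-1}S_j^{k'}\subsetneq S_j^k$.
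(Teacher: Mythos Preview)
Your proposal is correct and follows essentially the same approach as the paper: reduce to a single step, choose $s'\in S_j^{k'}$ nearest to $Tx$, pull back via the inversion identity to get $d(s,T^{-1}s')<3\epsilon$, invoke $\iota$-symmetry of pieces and the Proximity Lemma to force $T^{-1}S_j^{k'}=S_j^k$ (using autonomy to exclude the $S_{j+1}$ alternative), and finish with the uniqueness of nearest points under the common rescaling factor $\norm{x}^{-1}$. Your write-up is slightly more explicit than the paper's in a few places --- you spell out the iteration over shifted sequences, the bound $\norm{\gamma_1 Tx}=\norm{x}^{-1}\ge 1$, and the contradiction argument for $s'=Ts$ --- but the structure and every key step are the same.
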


We now show that we found all of the points we are interested in. Note that most points in $\Sph$ will be far from $S_1$ but not inhabited.

\begin{lemma}
\label{lemma:inhabitedAutonomousTraps}
There is some function $\delta=\delta(\epsilon,\Zee,\iota)$ such that $\lim_{\epsilon\to 0} \delta =0$ and for $s\in S_j$ the following is true:

If $\epsilon>0$ is sufficiently small, $d(s, S_{j+1})>\delta$, and $s$ is $\epsilon$-inhabited, then $s$ is an $\epsilon$-trap.
\begin{proof}
Suppose $\epsilon$ and $s$ satisfy the assumptions in the lemma. 

We will prove $s$ is $\epsilon$-autonomous and conclude that it is an $\epsilon$-trap using Lemma \ref{lemma:trap}. We are already assuming that $s$ is $\epsilon$-inhabited, so it suffices to show that for all $i\geq 1$ we have that $d(T^is,S_{j+1})>\tau(3\epsilon)$. Let $\delta_0>\tau(3\epsilon)$ with an exact value to be determined later, and choose $\delta_i$ iteratively so that $\tau(2\delta_{i+1}) <\delta_i$ and $\delta_{i+1}<1/2$.

Assume that $d(s,S_{j+1})> \delta_0$. We now claim that $d(Ts, S_{j+1})>\delta_1$.  Suppose, by way of contradiction, that for some $s'\in S_{j+1}$ we have $d(Ts,s')\leq \delta_1$.

We first prove that $d(s, T^{-1}s')<2\delta_1$. We compute:
\(
d(s,T^{-1}s') =
d(s, \iota \gamma_1 s') =
\frac{d(\iota s, \gamma_1 s')}{\norm{\gamma_1 s'}} =
\frac{d(Ts, s')}{\norm{\gamma_1 s'}} \leq 
\frac{\delta_1}{\norm{\gamma_1 s'}} \leq
\frac{\delta_1}{1-\delta_1},
\)
where the last estimate is given by $\norm{\gamma_1 s'}\geq \norm{\iota s}-d(\iota s, \gamma_1 s') \geq 1- \delta_1$. Since we assumed that $\delta_1<1/2$, we conclude that $d(s, T^{-1}s')<2\delta_1$.

We thus have that $s\in S_j$ and $d(s, \iota \gamma_1 S_{j+1})<2\delta_1$. By the intersection part of Lemma \ref{lemma:epsilonrestriction}, this implies that $d(s, S_j \cap \iota \gamma_1 S_{j+1})< \tau(2\delta_1)<\delta_0$. Now, by the sphere-normalizing condition \ref{defi:inversioncommutes}, there is a $\gamma_1'\in \Zee$ such that 
\( S_j \cap \iota \gamma_1 S_{j+1} = S_j \cap \gamma_1'\iota  S_{j+1}.\)
Since $S_{j+1}$ is $\iota$-symmetric, this is furthermore equal to $S_j \cap \gamma_1' S_{j+1}$. It then follows directly from the definition of $S_{j+1}$ that $S_j \cap \gamma_1' S_{j+1} \subset S_j \cap S_{j+1} = S_{j+1}$, so our estimate becomes $d(s, S_{j+1})<\delta_0$, which contradicts our assumption.

As in the proof of Lemma \ref{lemma:trap}, we can use the fact that $s$ is inhabited and that $d(s,S_{j+1})>\tau(3\epsilon)$ to conclude that $Ts\in S_j$.  We thus have that $Ts\in S_j$ and $d(Ts, S_{j+1})>\delta_1$.

Now, we would like to apply the above argument recursively to the full orbit $\{T^is\}$ of $s$. Each time we have to apply it, the estimated distance to $S_{j+1}$ shrinks. In particular, performing $i$ applications would give that $d(T^i s, S_{j+1})>\delta_i$. Note that we require $\delta_i\ge \tau(3\epsilon)$ in order to apply the recursion a further step. 

On the other hand, we don't have to use the above argument if $T^is$ is a point that has already appeared in the sequence. Let $a$ denote the maximum possible number of distinct elements in the orbit $\{T^i s\}$ of $s$. We claim that we can take $a=2\norm{\{\gamma : \gamma \Sph \cap \Sph \neq \emptyset\}}$, which is finite since $\Zee$ is assumed to be discrete and proper. Indeed, by  the sphere-normalizing condition Definition \ref{defi:inversioncommutes}, as long as $T^i s$ stays in $\Sph$, it is always of the form $\gamma \iota s$ or $\gamma s$ for some $\gamma\in \Zee$, and therefore there are at most $a$ choices for the point $T^i s$, as desired.

So suppose we choose $\delta_0$ so that $\delta_a\ge \tau(3\epsilon)$ and define $\delta=\delta_0$. Then, provided $\epsilon$ is sufficiently small, we have that $d(T^i s, S_{j+1})\geq \tau(3\epsilon)$ for all $i$ as desired.
\end{proof}
\end{lemma}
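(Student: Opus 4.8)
The plan is to reduce to Lemma \ref{lemma:trap}, which says that an $\epsilon$-autonomous point of $S_j$ is an $\epsilon$-trap: since $s$ is already assumed $\epsilon$-inhabited, it suffices to produce a $\delta$ (depending only on $\epsilon,\Zee,\iota$, with $\delta\to 0$ as $\epsilon\to 0$) such that the hypothesis $d(s,S_{j+1})>\delta$ forces $d(T^i s, S_{j+1})\geq \tau(3\epsilon)$ for every $i\geq 0$, i.e., that $s$ is $\epsilon$-autonomous. I would do this by propagating the single bound $d(s, S_{j+1})>\delta$ along the orbit $\{T^i s\}$, contracting it at each step but invoking the step only finitely often.

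The single-step estimate is the heart of the matter. Fix a decreasing sequence $\delta_0>\delta_1>\cdots$ of positive reals below $1/2$ with $\tau(2\delta_{i+1})<\delta_i$; I claim $d(T^i s, S_{j+1})>\delta_i$ forces $d(T^{i+1}s, S_{j+1})>\delta_{i+1}$. Suppose not: some $s'\in S_{j+1}$ satisfies $d(T^{i+1}s, s')<\delta_{i+1}$. Undoing one step with the relevant inverse branch $\iota\gamma$ ($\gamma$ the digit used at stage $i+1$) and invoking the inversion formula $d(x,y)=d(\iota x,\iota y)\norm{x}\norm{y}$ together with $\norm{T^i s}=1$, the isometry property of $\gamma$, and $\norm{\iota z}=\norm{z}^{-1}$, one gets $d(T^i s, \iota\gamma s')=d(T^{i+1}s, s')/\norm{\gamma s'}<\delta_{i+1}/(1-\delta_{i+1})<2\delta_{i+1}$, using $\norm{\gamma s'}\geq 1-\delta_{i+1}$ (since $\gamma s'$ lies within $\delta_{i+1}$ of $\iota(T^i s)$, which has norm $1$). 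Now $T^i s$ has distance $0$ from $S_j$ and distance $<2\delta_{i+1}$ from $\iota\gamma S_{j+1}$, so the Proximity Lemma \ref{lemma:epsilonrestriction}(2) gives $d(T^i s, S_j\cap\iota\gamma S_{j+1})<\tau(2\delta_{i+1})<\delta_i$. The sphere-normalizing condition \ref{defi:inversioncommutes} rewrites $S_j\cap\iota\gamma S_{j+1}=S_j\cap\gamma'\iota S_{j+1}$ for a suitable $\gamma'\in\Zee$, the $\iota$-symmetry of the pieces (Lemma \ref{lemma:decompositionproperties}(3)) turns this into $S_j\cap\gamma' S_{j+1}$, and from the construction of $S_{j+1}$ in Definition \ref{defi:decomposition} one checks $S_j\cap\gamma' S_{j+1}\subseteq S_j\cap S_{j+1}=S_{j+1}$. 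Hence $d(T^i s, S_{j+1})<\delta_i$, a contradiction. The same Voronoi-cell tracking as in the proof of Lemma \ref{lemma:trap}, using that $s$ is $\epsilon$-inhabited and $d(T^i s,S_{j+1})>\tau(3\epsilon)$, additionally forces $T^{i+1}s\in S_j$, so the orbit stays on $\Sph$ at every stage.

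It remains to bound how many times the step must be applied. By the sphere-normalizing condition, as long as $T^i s\in\Sph$ it has the form $\gamma s$ or $\gamma\iota s$ with $\gamma\in\Zee$, and discreteness (hence properness) of $\Zee$ makes $\{\gamma\in\Zee : \gamma\Sph\cap\Sph\neq\emptyset\}$ finite; thus the orbit visits at most $a:=2\,\#\{\gamma\in\Zee : \gamma\Sph\cap\Sph\neq\emptyset\}$ distinct points. So the recursion is needed at most $a$ times along the orbit: whenever $T^{i+1}s$ repeats an earlier orbit point it inherits that point's lower bound rather than a smaller fresh one, so the effective recursion depth never exceeds $a$. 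Accordingly I specify the finitely many values $\delta_a:=\tau(3\epsilon),\delta_{a-1},\dots,\delta_0$ from the top down via $\tau(2\delta_{i+1})<\delta_i$ (note $\delta_i>\tau(2\delta_{i+1})\geq 2\delta_{i+1}$, so the sequence decreases); each passage through $\tau$ preserves the property of tending to $0$ as $\epsilon\to 0$ (Proximity Lemma), so $\delta:=\delta_0$ is a valid function of $(\epsilon,\Zee,\iota)$ with $\delta\to 0$, and for $\epsilon$ small all $\delta_i<1/2$. Then $d(T^i s, S_{j+1})>\delta_a=\tau(3\epsilon)$ for all $i$, so $s$ is $\epsilon$-autonomous, and Lemma \ref{lemma:trap} completes the proof.

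The main obstacle I anticipate is the simultaneous bookkeeping of smallness thresholds: $\epsilon$ must be small enough for Lemma \ref{lemma:trap} and for every instance of the Proximity Lemma, all $\delta_i$ must stay below $1/2$, and $\delta_0$ must vanish with $\epsilon$ yet still dominate $\tau(3\epsilon)$ after $a$ contractions through $\tau$ — and one must confirm that the orbit genuinely remains in $S_j$ at each stage so that the finiteness count is legitimate. None of this is deep, but arranging the choices consistently, and nailing down the inclusion $S_j\cap\gamma' S_{j+1}\subseteq S_{j+1}$ out of the intricate Definition \ref{defi:decomposition}, is where the real care is needed.
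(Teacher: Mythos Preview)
Your proposal is correct and follows essentially the same route as the paper: reduce to Lemma~\ref{lemma:trap} by proving $\epsilon$-autonomy, run the single-step contradiction via the inversion formula, the Proximity Lemma, and the sphere-normalizing condition, and bound the recursion depth by $a=2\,\#\{\gamma:\gamma\Sph\cap\Sph\neq\emptyset\}$ before fixing $\delta=\delta_0$ so that $\delta_a\ge\tau(3\epsilon)$. Your write-up is in fact slightly more explicit than the paper's in two places (the backward specification $\delta_a:=\tau(3\epsilon)$ and the remark that repeated orbit points inherit earlier bounds), but the argument is the same.
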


We can now prove Lemma \ref{thm:non-convergence} as a corollary of Lemma \ref{lemma:inhabitedAutonomousTraps}.
\begin{cor}
[Non-convergence to the boundary]
Let $\norm{x}<1$ and let $\gamma_n$ be a sequence in $\Zee$ such that $\norm{T^n x}=\norm{\gamma_n^{-1}\iota \cdots \gamma_1^{-1} \iota x}<1$ for all $n$. Then it is not true that $\norm{T^n x}\rightarrow 1$.
\begin{proof}
Note first that if $\norm{T^n x}\rightarrow 1$, then $x$ $\epsilon$-orbits $S_0=\Sph$ for every $\epsilon>0$.

Working inductively, suppose that $x$ $\epsilon$-orbits some $S_j$. Let $s_i$ be the point that realizes $d(T^i x, S_j)$. If any $s_i$ satisfies $d(s_i,S_{j+1})>\delta$, then, by Lemma \ref{lemma:inhabitedAutonomousTraps}, we have that $T^i x$ is trapped by $s_i$ and hence, by Lemma \ref{lemma:trapOrbits}, the orbit $\{T^n x\}$ does not converge to $S_j\subset \Sph$. On the other hand, if $d(s_i,S_{j+1})\le \delta$ for all $i$, then $x$ must $\delta+\epsilon$-orbit $S_{j+1}$.

Thus, convergence to $\Sph$ implies convergence to $S_1$ and so on with dimension dropping by at least 1 each time, eventually culminating in convergence to an empty set, which is impossible.
\end{proof}
\end{cor}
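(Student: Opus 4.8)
The plan is to argue by contradiction: assume $\norm{T^n x}\to 1$ and run a descent down the nested family $\Sph=S_0\supset S_1\supset\cdots$ from Definition \ref{defi:decomposition}, using the trap machinery to show that convergence of the orbit $\{T^ix\}$ to any $S_j$ forces convergence to $S_{j+1}$. Since $\norm{T^n x}\to 1$ gives $d(T^ix,\Sph)\to 0$, the point $x$ converges to $S_0=\Sph$ in the sense of the convergence definition, so it would suffice to establish the implication: if $x$ converges to $S_j$, then $x$ converges to $S_{j+1}$.

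For that implication I would fix a target $\epsilon'>0$ and choose $\epsilon<\epsilon'$ small enough that $\epsilon<\min\{1/2,\epsilon_0/3\}$ (so Lemma \ref{lemma:trap} applies), that the uniqueness-of-nearest-point clause of Lemma \ref{lemma:epsilonrestriction} holds, and that $\epsilon+\delta(\epsilon)<\epsilon'$ for the function $\delta$ of Lemma \ref{lemma:inhabitedAutonomousTraps}. Since $x$ converges to $S_j$, there is $N$ with $T^ix\in N_\epsilon(S_j)$ for all $i\geq N$; replacing $x$ by $T^Nx$ and relabeling the digit sequence accordingly, I may assume $x$ itself $\epsilon$-orbits $S_j$, and then every tail point $T^ix$ (with its shifted sequence) also $\epsilon$-orbits $S_j$. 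For each $i$, let $s_i\in S_j$ be the point realizing $d(T^ix,S_j)$, unique by Lemma \ref{lemma:epsilonrestriction}(3); since $T^ix\in V_\epsilon(s_i,S_j)$ and $T^ix$ $\epsilon$-orbits $S_j$, each $s_i$ is $\epsilon$-inhabited. Then I would split into two cases. If some $s_i$ satisfies $d(s_i,S_{j+1})>\delta(\epsilon)$, then Lemma \ref{lemma:inhabitedAutonomousTraps} makes $s_i$ an $\epsilon$-trap, $T^ix$ is $\epsilon$-trapped by it, and Lemma \ref{lemma:trapOrbits} says the orbit of $T^ix$---a tail of the orbit of the original $x$---does not converge to $S_j$, contradicting the hypothesis. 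Hence $d(s_i,S_{j+1})\leq\delta(\epsilon)$ for every $i$, so $d(T^ix,S_{j+1})\leq d(T^ix,s_i)+d(s_i,S_{j+1})<\epsilon+\delta(\epsilon)<\epsilon'$ for all $i\geq 0$; thus $x$ $\epsilon'$-orbits $S_{j+1}$, and since $\epsilon'$ was arbitrary, $x$ converges to $S_{j+1}$.

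To finish, I would note that the descent terminates: by Lemma \ref{lemma:decompositionproperties}(2) each piece of $S_{j+1}$ is either a single point or a sphere of dimension at most $n-j-2$, and a point contributes no further pieces (its only self-intersections with translates are trivial and hence discarded), so after at most $n$ steps one reaches $S_N=\emptyset$---but no orbit of a point of $X$ can converge to the empty set, giving the contradiction and hence $\norm{T^nx}\not\to 1$. I expect the main obstacle to be not in this assembly---the real work is already carried by the trap lemmas---but in the quantifier bookkeeping: cleanly passing between ``$\epsilon$-orbits'' and ``converges to,'' using the shift-and-relabel reduction so that each tail point still $\epsilon$-orbits $S_j$ (which is what certifies that $s_i$ is inhabited), and keeping the loss $\delta(\epsilon)$ under control so that the $n$-fold descent still lands, scale by scale, on the empty set.
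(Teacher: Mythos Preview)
Your proposal is correct and follows essentially the same descent argument as the paper: assume convergence to $\Sph=S_0$, use Lemma \ref{lemma:inhabitedAutonomousTraps} together with Lemma \ref{lemma:trapOrbits} to show that convergence to $S_j$ forces convergence to $S_{j+1}$, and terminate by the dimension drop in Lemma \ref{lemma:decompositionproperties}. Your write-up is in fact slightly more careful than the paper's about the quantifier bookkeeping (the $\epsilon$--$\epsilon'$ choice with $\epsilon+\delta(\epsilon)<\epsilon'$, the shift-and-relabel so each tail $T^ix$ genuinely $\epsilon$-orbits $S_j$, and the explicit reason single-point pieces contribute no further intersectors), but the underlying mechanism is identical.
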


\bibliographystyle{amsplain}
\bibliography{bibliography}

\end{document}